\numberwithin{equation}{section}
\newtheorem{Theorem}[equation]{Theorem}
\newtheorem{Proposition}[equation]{Proposition} 
\newtheorem{Lemma}[equation]{Lemma}
\theoremstyle{definition}
\newtheorem{Remark}[equation]{Remark}
\numberwithin{figure}{section}
\newcommand{\bC}{\mathbb{C}}
\newcommand{\bD}{\mathbb{D}}
\newcommand{\bR}{\mathbb{R}}
\newcommand{\bZ}{\mathbb{Z}}
\newcommand{\cF}{\mathcal{F}}
\newcommand{\cG}{\mathcal{G}}
\newcommand{\cO}{\mathcal{O}}
\newcommand{\cP}{\mathcal{P}}
\newcommand{\fg}{\mathfrak{g}}
\newcommand{\ep}{\varepsilon}
\newcommand{\wt}{\widetilde}
\newcommand{\wh}{\widehat}
\newcommand{\into}{\hookrightarrow}
\newcommand{\id}{\mathrm{id}}
\newcommand{\pt}{\mathrm{pt}}
\DeclareMathOperator{\Hom}{Hom}
\newcommand{\conv}{{\mathbin{\scalebox{1.1}{$\mspace{1.5mu}*\mspace{1.5mu}$}}}}
\newcommand{\Loc}{\mathrm{Loc}}
\newcommand{\Sh}{\mathrm{Sh}}
\newcommand{\Coh}{\mathrm{Coh}}
\newcommand{\Pic}{\mathrm{Pic}}
\newcommand{\msp}{ss}
\newcommand{\Mod}{\mathrm{Mod}}
\newcommand{\supp}{\mathrm{supp}}
\newcommand{\smint}{I}
\newcommand{\lgint}{\widetilde{I}}
\newcommand{\Int}{\mathrm{Int}}
\newcommand{\Relint}{\mathrm{Relint}}
\DeclareMathOperator{\cHom}{\mathscr{H}\text{\kern -3pt {\calligra\large om}}\,}
\newcommand{\ad}{\mathrm{ad}}
\newcommand{\Ad}{\mathrm{Ad}}
\newcommand{\ol}{\overline}
\newcommand{\coeff}{\bC}
\newcommand{\Conv}{\mathrm{Conv}}
\newcommand{\no}{}
\newcommand{\Div}{\mathrm{Div}}
\newcommand{\IndCoh}{\mathrm{IndCoh}}
\newcommand{\Perf}{\mathrm{Perf}}
\DeclareFontFamily{U}{mathx}{\hyphenchar\font45}
\DeclareFontShape{U}{mathx}{m}{n}{
	<5> <6> <7> <8> <9> <10>
	<10.95> <12> <14.4> <17.28> <20.74> <24.88>
	mathx10
}{}
\DeclareSymbolFont{mathx}{U}{mathx}{m}{n}
\DeclareMathAccent{\widecheck}{0}{mathx}{"71}
\DeclareMathSymbol{\shortminus}{\mathbin}{AMSa}{"39}
\DeclareMathSymbol{\sm}{\mathbin}{AMSa}{"39}
\newcommand{\arrtip}{latex'}
\begin{document}
\title[Contact isotopies in the coherent-constructible correspondence]{Contact isotopies in the coherent-constructible correspondence}

\author[Jishnu Bose]{Jishnu Bose}
\address[Jishnu Bose]{University of Southern California \\ Los Angeles CA, USA}
\email{jishnubo@usc.edu}

\author[Harold Williams]{Harold Williams}
\address[Harold Williams]{University of Southern California \\ Los Angeles CA, USA}
\email{hwilliams@usc.edu}

\begin{abstract} 
The coherent-constructible correspondence is a realization of toric mirror symmetry in which the A-side is modeled by constructible sheaves on $T^n$. This paper provides a geometric realization of the mirror Picard group action in this correspondence, characterizing it in terms of quantized contact isotopies and providing a sheaf-theoretic counterpart to work of Hanlon in the Fukaya-Seidel setting.  Given a toric Cartier divisor~$D$, we consider a family of homogeneous Hamiltonians~$H_\ep$ on $\dot{T}^* T^n$. Their flows act on sheaves via a family of kernels $K_\ep$ on $T^n \times T^n$. The nearby cycles kernel $K_0$ corresponds heuristically to the Hamiltonian flow of the non-differentiable function $\lim_{\ep \to 0} H_\ep$, which is the pullback of the support function of~$D$ along the cofiber projection. We show that the action of $K_0$ coincides with the convolution action of the associated twisted polytope sheaf, hence mirrors the action of~$\cO(D)$ on coherent sheaves. 
\end{abstract}

\maketitle

\setcounter{tocdepth}{1}

\tableofcontents

\section{Introduction}
\thispagestyle{empty}

Let $X_\Sigma$ be the complex toric variety associated to a fan $\Sigma \subset \bR^n$. Mirror symmetry relates the derived category $\Coh(X_\Sigma)$ of coherent sheaves to the symplectic geometry of~$T^* T^n$, a connection which has been formalized in various ways. One of these is the coherent-constructible correspondence \cite{Bon06,FLTZ11,Tre10,Kuw20}, or CCC, an equivalence
\begin{equation*}\label{eq:ccc}
	\Coh(X_\Sigma) \cong \Sh_{\Lambda_\Sigma}^c(T^n). 
\end{equation*} 
Here the right side is the category of compact weakly constructible sheaves whose singular support meets the cosphere bundle $S^* T^n$ along a singular Legendrian $\Lambda_\Sigma$. This formalization complements those involving different variants of the Fukaya category of $T^* T^n$ \cite{Sei01,AKO06,Abo06,Abo09,KK17,HH22}, and reflects the close relationship between the sheaf theory of manifolds and the symplectic geometry of cotangent bundles \cite{KS94,NZ09,GPS18,Gui23}. 

Through its equivalence with $\Coh(X_\Sigma)$, the category $\Sh_{\Lambda_\Sigma}^c(T^n)$ inherits an action of the Picard group of $X_\Sigma$. A description of this mirror action is provided by the fact that the CCC intertwines tensoring on $X_\Sigma$ with convolution on $T^n$. The action of a line bundle $\cO(D)$ is thus given by convolution with its mirror, the pushforward of a certain sheaf $\cP(D)$ on $\bR^n$ along the projection $p: \bR^n \to T^n$. This $\cP(D)$ is called a twisted polytope sheaf \cite{Zho19}, as its support is a twisted polytope in the sense of \cite{KT93}. It may be presented as a complex of shard sheaves, certain conical sheaves which are mirror to summands of the \v{C}ech resolution of~$\cO(D)$. 

Though combinatorially explicit, this description is unsatisfying in that it has no direct relationship with the symplectic geometry of $T^* T^n$. By contrast, under suitable hypotheses the work of Hanlon \cite{Han19} provides an elegant geometric description of the mirror Picard group action on a variant of the Fukaya category, the monomially admissible Fukaya-Seidel category. Specifically, the mirror action of $\cO(D)$ is induced by the Hamiltonian flow associated to a smoothing of the support function of $D$. The goal of this paper is to formulate and prove a similar result in the setting of the CCC, in fact obtaining one which holds for more general~$X_\Sigma$. 

Our starting point is the work of Guillermou-Kashiwara-Schapira \cite{GKS12}. Given a manifold $M$, they prove that a homogeneous Hamiltonian isotopy of $\dot{T}^* M = T^* M \smallsetminus M$, or equivalently a contact isotopy of $S^* M$, is quantized by a canonical kernel $K \in \Sh(M \times M)$. Passing from a sheaf $\cF \in \Sh(M)$ to its singular support $\dot{\msp}(\cF) \subseteq \dot{T}^* M$ intertwines the underlying isotopy with the action of $K$ on $\Sh(M)$ by integral transforms. 

We apply this construction to a family of homogeneous Hamiltonians $H_{D,\ep}$ on $\dot{T}^* T^n$. Recall that $D$ is encoded by its support function $\varphi_D$, a continuous but only piecewise smooth function that is linear on the cones of $\Sigma$. We extend $\varphi_D$ to all of $\bR^n$ if $\Sigma$ is not complete, and then further define a family of smoothings $\varphi_{D,\ep}$ by convolving with a mollifier of radius~$\ep$, restricting to the unit sphere, and extending homogeneously. We then define $H_{D,\ep}$ as the pullback of $\varphi_{D,\ep}$ along the fiber projection $T^* T^n \to \bR^n$. 

By \cite{GKS12}, the flows of the $H_{D,\ep}$ are now quantized by a family of kernels $K_{D,\ep} \in \Sh(T^n \times T^n)$. While the non-smooth function $\lim_{\ep \to 0} H_{D,\ep}$ does not have a well-defined Hamiltonian flow, there is still a well-defined limit of the kernels $K_{D,\ep}$ in $\Sh(T^n \times T^n)$. Namely, we can assemble them into a total sheaf $K_{D,\smint} \in \Sh(T^n \times T^n \times \smint)$, where $\smint = (0,a)$ for some $a > 0$, and then form the nearby cycles kernel $\psi(K_{D,\smint}) \in \Sh(T^n \times T^n)$. By definition $\psi(K_{D,\smint}) \cong i^* j_* K_{D,\smint}$, where 
\[
\begin{tikzpicture}[baseline=(current bounding box.center), thick]
	% Nodes
	\node (A) at (0,0) {$T^n \times T^n \times \{0\}$};
	\node (B) at (4,0) {$T^n \times T^n \times \lgint$};
	\node (C) at (8,0) {$T^n \times T^n \times \smint$};
	
	% Arrows with smaller labels
	\draw[right hook->] (A) -- node[above, font=\scriptsize] {$i$} (B);
	\draw[left hook->] (C) -- node[above, font=\scriptsize] {$j$} (B);
\end{tikzpicture}
\]
and $\lgint$ is any open interval containing both $\smint$ and $0$. 

Our main result is the following relationship between this limit of quantized contact isotopies and the CCC. In the first equation, the left side refers to the action of kernels by integral transforms, and the right side to the convolution product on the group $T^n$. In the second equation, we write $A$ and $B$ for the inverse functors comprising the CCC. 

\begin{Theorem}\label{thm:mainthmintro}
Let $D$ be a torus-invariant Cartier divisor on $X_\Sigma$. For any $\cF \in \Sh(T^n)$ we have an isomorphism
\begin{equation*}\label{eq:nonequivmainthmintro} \psi(\no{K}_{D,\smint}) \circ \cF \cong (p_! \cP(D)) \conv \cF. \end{equation*}
In particular, if $\cF \in \Sh_{\Lambda_\Sigma}^c(T^n)$ we have an isomorphism
$$ \psi(\no{K}_{D,\smint}) \circ \cF \cong A(\cO(D) \otimes B(\cF)). $$
\end{Theorem}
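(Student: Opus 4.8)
The plan is to compute the nearby cycles kernel $\psi(K_{D,\smint})$ directly and match it with $p_! \cP(D)$, after which the second statement follows by combining the first equation with the CCC identity $A(\cO(D) \otimes B(\cF)) \cong (p_!\cP(D)) \conv \cF$, which expresses that the correspondence intertwines tensoring with convolution. So the real content is the first isomorphism, and I would reduce it to an identification of kernels: since integral transform by $\psi(K_{D,\smint})$ and convolution by $p_!\cP(D)$ are both given by pull-push along $T^n \times T^n$, it suffices to produce a functorial isomorphism $\psi(K_{D,\smint}) \cong (m_{13})_! \cdots$ — more precisely, to show $\psi(K_{D,\smint})$ is isomorphic to the pushforward of $\cP(D)$ under the map $\bR^n \to T^n \times T^n$, $x \mapsto (p(x) + (-), (-))$ realizing convolution as an integral transform. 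Concretely I want to show $\psi(K_{D,\smint})$ agrees with the ``graph-translation'' kernel attached to $p_!\cP(D)$.

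The key steps, in order. First, make the family $K_{D,\smint}$ explicit enough to take nearby cycles. By \cite{GKS12} each $K_{D,\ep}$ is the quantization of the flow of $H_{D,\ep} = \varphi_{D,\ep}\circ (\text{fiber projection})$; because this Hamiltonian is pulled back from the base $\bR^n$ (a ``fiberwise-linear''/translation-type Hamiltonian), its time-one flow is the cotangent lift of a translation-type symplectomorphism, and the GKS kernel is correspondingly the constant sheaf along the graph of a family of translations on $T^n$ governed by $\nabla \varphi_{D,\ep}$. I would isolate this as a lemma: for $H$ pulled back along $T^*T^n \to \bR^n$ from a smooth homogeneous $h$ on $\bR^n\smallsetminus 0$, the GKS kernel is $(\tau_h)_! \coeff$ for an explicit map $\tau_h$, essentially because such flows preserve the zero section fiberwise and GKS quantization is compatible with pullback from the base. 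Second, pass to the limit $\ep \to 0$: assemble the $\tau_{h_\ep}$ into a map over $\smint$ and compute $i^* j_*$. The smoothing $\varphi_{D,\ep} \to \varphi_D$ converges, but its gradient does \emph{not} converge pointwise — near the walls of $\Sigma$ the gradient sweeps out the whole normal fan, i.e.\ the lattice points / faces of the polytope of $D$. This is exactly the mechanism by which nearby cycles ``spreads out'' the graph of a single translation into a union over the vertices of the twisted polytope, reproducing the shard sheaf / \v{C}ech-resolution structure of $\cP(D)$. Third, identify the resulting object: the nearby cycles of a constant sheaf on the total space of this degenerating family of graphs should be, after the reduction above, precisely $\cP(D)$ pushed to $T^n\times T^n$; here I would invoke the presentation of $\cP(D)$ as a complex of shard sheaves \cite{Zho19} and match each shard with a stratum of the limiting Lagrangian $\lim_{\ep\to 0}\operatorname{graph}(d\varphi_{D,\ep})$, which is the conormal variety of the support function's domains of linearity.

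I expect the main obstacle to be Step 2 — controlling the nearby cycles of the family $K_{D,\smint}$ as $\ep\to 0$. The subtlety is that $\psi$ does not commute with the pull-push that defines the integral transform in general, so one cannot simply take nearby cycles of the family of translation maps naively; one needs either a properness/non-characteristic input to commute $\psi$ past $i^*j_*$ and the convolution, or an explicit co-sheaf-theoretic model of the degenerating kernel on which $i^*j_*$ can be computed stratum by stratum. A clean way to handle this is to first establish the isomorphism on a generating subcategory — e.g.\ for $\cF$ a skyscraper, where both sides reduce to honest sheaves on $T^n$ and the computation becomes the nearby cycles of the family $\varphi_{D,\ep}$-translates of a point, visibly a twisted polytope — and then bootstrap to all of $\Sh(T^n)$ by continuity of both functors (both $\psi(K_{D,\smint})\circ(-)$ and $(p_!\cP(D))\conv(-)$ commute with colimits). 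The homogeneity/conicity of everything in sight (the $H_{D,\ep}$ are homogeneous, $\cP(D)$ is built from conical sheaves) is what makes the $\ep\to 0$ limit tractable and should be leveraged throughout.
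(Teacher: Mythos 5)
Your overall reduction (identify $\psi(K_{D,\smint})$ with the convolution kernel of $p_!\cP(D)$, then quote the CCC's tensor--convolution compatibility) is the right skeleton, and you correctly flag the commutation of $\psi$ with the pull--push as a danger point. But the proposal has two genuine gaps. First, your Step 1 lemma is false as stated: for a Hamiltonian $H=\varphi_\ep(\xi)$ pulled back from the fiber, the time-one flow is $(x,\xi)\mapsto(x+d\varphi_\ep(\xi),\xi)$, which is \emph{not} the cotangent lift of a translation of $T^n$ unless $\varphi_\ep$ is linear, and the GKS kernel is \emph{not} of the form $(\tau_h)_!\coeff$ for a map $\tau_h$ (already for $\varphi=\|\xi\|$ the kernel is a constant sheaf on a solid ball-shaped region of $T^n\times T^n$, not a pushforward along a graph). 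The correct structural statement, which carries the weight in the actual argument, is that bi-invariance forces $K\cong \ol{a}^*\cP$ with $a(g,h)=gh^{-1}$ and $\cP=K\circ\bC_e$ a genuinely nontrivial sheaf on $T^n$ (here roughly a constant sheaf on a smoothed twisted-polytope region); i.e.\ the kernel is a convolution kernel, not the graph of a map. Your later steps silently rely on the false model.

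Second, the heart of the theorem --- that $\psi$ of the family $\cP_\ep=K_{D,\ep}\circ(\text{skyscraper})$ is $p_!\cP(D)$ --- is asserted as ``visibly a twisted polytope,'' but this is exactly where all the work lies and no mechanism is given. One needs: (i) identification of the pointwise limits $\lim_{\ep\to 0}d\varphi_{D,\ep}(\xi)=\chi_\sigma$ on $\Relint(\sigma)$, producing the ($\bR$-)Cartier data; (ii) quantitative control of the \emph{failure} of uniform convergence near the walls ($\limsup$ of the graphs of $d\varphi_{D,\ep}$ landing in $\chi_\sigma+\sigma(1)^\perp\times\sm\dot\sigma$), which via a singular-support bound for nearby cycles constrains $\msp$ of the limit; (iii) a recognition statement pinning down a sheaf with that singular support and those stalks as the shard sheaf $\coeff_{\,\Int(\chi_\sigma+\sigma^\vee)}$, applied termwise to the \v{C}ech resolution together with an identification of the differentials; and (iv) a uniform bound on $d\varphi_{D,\ep}$ giving properness of supports so that $\psi$ can be commuted past the convolution (your proposed fallback --- check on skyscrapers and extend by colimits --- does not obviously work, since skyscrapers do not generate $\Sh(T^n)$ under colimits; the kernel-level identification is the viable route, and it is precisely (i)--(iv) that make it go). Finally, for non-complete $\Sigma$ the support function must first be extended (via a smooth completion of a refinement of the fan) before any of this makes sense; the proposal does not address this case.
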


As alluded to above, Theorem \ref{thm:mainthmintro} is a counterpart of \cite[Thm. 4.5]{Han19} in the setting of (monomially admissible) Fukaya-Seidel categories and its corollary \cite[Thm. 3.13]{HH22} in the setting of partially wrapped Fukaya categories. However, its statement does not map exactly onto those of the latter results, which contain no direct analogue of nearby cycles. It is monomial admissibility which in some sense avoids this, as it restricts the class of Lagrangians included so that for finite $\ep$ they are stable under the flow of a Hamiltonian related to $H_{D,\ep}$. But $\Lambda_\Sigma$ is not preserved by this flow for any finite $\ep$, which leads to the limiting aspect of Theorem \ref{thm:mainthmintro}. 

Note that while $X_\Sigma$ is an arbitrary normal toric variety in Theorem \ref{thm:mainthmintro}, it is required to be smooth and projective in \cite[Thm. 4.5]{Han19} and \cite[Thm. 3.13]{HH22}. While the simplicial, quasi-projective case is not substantially different, new technical challenges seem to stand in the way of extending these Floer-theoretic results to the case when $X_\Sigma$ has arbitrary singularities, or has no ample line bundles (e.g. \cite[Rem. 3.9]{HH22}). 

Recall that \cite{GPS18} establishes an equivalence between the sheaf and partially wrapped Fukaya categories associated to a singular Legendrian in a cosphere bundle. When $X_\Sigma$ is smooth and projective, we could imagine using an extension of \cite{GPS18} to deduce some version of Theorem \ref{thm:mainthmintro} from \cite[Thm. 3.13]{HH22}.  This would require analyzing how \cite[Thm. 1.1]{GPS18} interacts with limits and nearby cycles, as well as with the construction of \cite{GKS12}. On the other hand, part of the point of Theorem \ref{thm:mainthmintro} is exactly that its statement and proof are independent of Floer theory. We take the view that sheaf theory and Floer theory provide complementary perspectives on the A-side, and that mirror symmetry is understood most deeply when its different aspects are understood independently from both of them. 

We also remark that the proof of the CCC for simplicial fans in \cite{She22} offers yet another perspective on the mirror Picard group action. Here the Legendrian $\Lambda_\Sigma$ arises by construction as a slice of a larger Legendrian living over a larger torus. Parallel transport along the additional directions in this torus yields a monodromy action on $\Sh_{\Lambda_\Sigma}(T^n)$, which can be identified with the mirror action of $\Pic(X_\Sigma)$ \cite[Rem. after Thm. 17]{She22}. 

The remainder of the paper is organized as follows. In Section \ref{sec:generalities} we review some relevant background and fix our notation. Section \ref{sec:GKSonLie} collects those results we establish which are ultimately general facts about GKS kernels on a Lie group $G$. The first main result, Proposition~\ref{prop:GKSongroups}, states that the action of a GKS kernel associated to a bi-invariant Hamiltonian on $\dot{T}^* G$ can be expressed in terms of group convolution. The second, Proposition \ref{prop:nearbyactions}, provides criteria for the action of a family of such kernels to commute with nearby cycles, compatibly with the isomorphisms provided by Proposition \ref{prop:GKSongroups}. While our application uses these results when~$G$ is abelian, another case we find interesting is when $G$ is compact and the Hamiltonian is the norm associated to a bi-invariant metric. 

Section \ref{sec:mainproofs} contains the proof of Theorem \ref{thm:mainthmintro}. It begins with a detailed analysis of how the derivatives of the smoothings $\varphi_{D,\ep}$ behave as $\ep \to 0$ (Propositions \ref{prop:limdphiepxi}, \ref{prop:limsup}, and \ref{prop:dvarphiepbounded}). We use these to bound the singular support of the action of $\psi(K_{D,\smint})$ on a shard sheaf, and show moreover that the resulting sheaf is essentially determined by its singular support (Propositions \ref{prop:sigma1perpssprop} and \ref{prop:psiKDonshards}). Using this we deduce the complete, equivariant version of Theorem \ref{thm:mainthmintro} (Theorem \ref{thm:equivmainthm}), and analyze how it interacts with the projection $\bR^n \to T^n$ to deduce Theorem \ref{thm:mainthmintro} in the complete case (Theorem \ref{thm:nonequivmainthm}). We then deduce the noncomplete case from this in Theorem \ref{thm:noncompletemainthm}. 

\subsection*{Acknowledgements}
We thank Sheel Ganatra, Andrew Hanlon, Christopher Kuo, Wenyuan Li, and Eric Zaslow for valuable comments and discussions. H. W. was supported by NSF grant DMS-2143922. 

\section{Generalities}\label{sec:generalities}

We collect here some general results we will refer back to as needed, as well as summarizing the terminology and notation we will use. 

\subsection{Sheaves and spaces} By default, category will mean $\infty$-category, but the reader may safely ignore this (i.e. most or all arguments apply equally to derived $\infty$-categories and their underlying triangulated categories). Given a ring $R$, we write $\Mod_R$ for the derived $\infty$-category of $R$-modules. Its homotopy category is the classical unbounded derived category of $R$-modules. The heart $\Mod_R^\heartsuit$ of its natural t-structure is the ordinary category of $R$-modules. 

By default, space will mean locally compact Hausdorff space. We write $\Sh(X)$ for the category of $\Mod_\bC$-valued sheaves on a space $X$, and $\Loc(X)$ for its subcategory of locally constant sheaves. These categories have natural t-structures whose hearts $\Sh(X)^\heartsuit$ and $\Loc(X)^\heartsuit$ are the ordinary categories of $\Mod_\bC^{\heartsuit}$-valued sheaves and local systems. The category $\Loc(X)$ is compactly generated and we write $\Loc^c(X)$ for its subcategory of compact objects. 

To a continuous map $f: X \to Y$ are associated functors $f^*, f^!: \Sh(Y) \to \Sh(X)$ and $f_*,f_!: \Sh(X) \to \Sh(Y)$. Along with tensor and sheaf Hom, these functors extend their classical counterparts defined at the level of homotopy categories with suitable boundedness conditions. We refer to \cite[Ch. 2]{KS94} for the classical theory and \cite{Jin24,Vol21} for details on its unbounded, $\infty$-categorical extension. 

We write $\pt_X: X \to \pt$ for the projection, and have the constant and dualizing sheaves $\bC_X := \pt_X^* \bC$ and $\omega_X := \pt_X^! \bC$. We have the Verdier dual $\bD_X:= \cHom(-,\omega_X)$, which we write as $\bD$ when $X$ is clear from context. If $i: X \into Y$ is the inclusion of a subspace, we sometimes write $\bC_X$ for $i_! \bC_X$ when it is clear we are referring to a sheaf on $Y$. 

Let $\smint \subseteq \bR_+$ be an open interval whose closure in~$\bR$ contains $0$, and let $\lgint \subseteq \bR$ be an open interval containing $\{0\}$ and $\smint$. We denote their inclusions by $i: \{0\} \into \lgint$ and $j: \smint \into \lgint$. Given a space $X$, the associated nearby cycles functor $\psi$ is then the composition
$$ \psi: \Sh(X \times \smint) \xrightarrow{\ol{j}_*} \Sh(X \times \lgint) \xrightarrow{\ol{i}^*} \Sh(X). $$
Here and elsewhere, we adopt the convention that if $f$ is a map of spaces, $\ol{f}$ is the product of~$f$ with an identity map which is clear from context (i.e. $\ol{i} = \id_X \times i $ above). 

\subsection{Kernels and integral transforms}
Given a space $X$, the integral transform associated to a kernel $K \in \Sh(X \times X)$ is the functor $ K \circ - : \Sh(X) \to \Sh(X)$ given by 
$$ K \circ - := \pi_{1!}(K \otimes \pi_2^*(-)), $$
where $\pi_1$ and $\pi_2$ are the projections to the left and right factors. We will also consider the following relative version of this construction. To $K \in \Sh(X \times X \times Y)$ we associate the functor $ K \circ_Y - : \Sh(X \times Y) \to \Sh(X \times Y)$ given by 
$$ K \circ_Y - := \ol{\pi}_{1!}(K \otimes \ol{\pi}_2^*(-)), $$
where $\ol{\pi}_1 = \pi_1 \times \id_Y$, similarly for $\ol{\pi}_2$. 

For integral transforms, we have the following analogue of the projection formula. 

\begin{Proposition}\label{prop:convprojform}
	Let $f: X \to Y$ be a map of spaces, and let $\ol{f}_1 = f \times \id_X$ and $\ol{f}_2 = \id_Y \times f$. Suppose that $K \in \Sh(X \times X)$ and $\wt{K} \in \Sh(Y \times Y)$ are kernels such that $\ol{f}_{1!} K \cong \ol{f}_2^* \wt{K}$. Then for any $\cF \in \Sh(X)$ there is an isomorphism
	$$ \wt{K} \circ f_! \,\cF \cong f_!(K \circ \cF). $$
\end{Proposition}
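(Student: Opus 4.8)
The plan is to prove the identity by a diagram chase through the intermediate space $Y \times X$, using only the projection formula, proper base change, and the hypothesis $\ol{f}_{1!}K \cong \ol{f}_2^* \wt K$; I do not expect a genuine obstacle, only bookkeeping.

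First I would record the relevant factorizations. Write $q_1 \colon Y \times X \to Y$ and $q_2 \colon Y \times X \to X$ for the two projections, and recall $\pi_1, \pi_2 \colon X \times X \to X$ and $p_1, p_2 \colon Y \times Y \to Y$ for the projections appearing in the two integral transforms. Directly from the definitions $\ol f_1 = f \times \id_X$ and $\ol f_2 = \id_Y \times f$ one checks
$$ f \circ \pi_1 = q_1 \circ \ol f_1, \qquad \pi_2 = q_2 \circ \ol f_1, \qquad q_1 = p_1 \circ \ol f_2, \qquad p_2 \circ \ol f_2 = f \circ q_2, $$
so that $Y \times X$ interpolates between $X \times X$ and $Y \times Y$ via $\ol f_1$ and $\ol f_2$ compatibly with all four projections, and in particular the square formed by $\ol f_2, q_2, p_2, f$ commutes.

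Then I would compute $f_!(K \circ \cF)$ in steps. Starting from $f_!(K \circ \cF) = f_! \pi_{1!}(K \otimes \pi_2^* \cF)$, the first two relations give $f_! \pi_{1!} = q_{1!} \ol f_{1!}$ and $\pi_2^* = \ol f_1^* q_2^*$, so the projection formula for $\ol f_1$ yields $f_!(K \circ \cF) \cong q_{1!}( \ol f_{1!} K \otimes q_2^* \cF)$. Plugging in the hypothesis $\ol f_{1!} K \cong \ol f_2^* \wt K$, then using $q_{1!} = p_{1!}\ol f_{2!}$ together with the projection formula for $\ol f_2$ to pull $\wt K$ out, gives $f_!(K \circ \cF) \cong p_{1!}( \wt K \otimes \ol f_{2!} q_2^* \cF)$. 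Finally, the square formed by $\ol f_2, q_2, p_2, f$ is Cartesian — the fiber product $X \times_Y (Y \times Y)$ along $f$ and $p_2$ is canonically $Y \times X$, with $\ol f_2$ and $q_2$ as its two projections — so proper base change gives $\ol f_{2!} q_2^* \cF \cong p_2^* f_! \cF$, and the right-hand side becomes $p_{1!}(\wt K \otimes p_2^* f_! \cF) = \wt K \circ f_! \cF$, as desired. The same chain of isomorphisms is manifestly natural in $\cF$, though the statement only asks for an isomorphism objectwise.

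As noted, I do not anticipate a real obstacle: the assertion is a formal consequence of the six-functor formalism, and the only care needed is orienting the base-change isomorphism correctly and keeping the various projections straight, together with invoking these identities (projection formula, proper base change) in the unbounded, $\infty$-categorical setting of $\Sh(-)$, for which we may cite the references in Section~\ref{sec:generalities}. In particular no properness hypothesis on $f$ is needed, since only $f_!$ (not $f_*$) appears.
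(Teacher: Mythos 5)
Your proof is correct and is essentially the paper's own argument run in reverse: the same ingredients (the projection formula for $\ol{f}_1$ and $\ol{f}_2$, the hypothesis $\ol{f}_{1!}K \cong \ol{f}_2^*\wt{K}$, and proper base change for the Cartesian square through $Y \times X$) appear in the same roles, with the only difference being that you start from $f_!(K \circ \cF)$ rather than $\wt{K} \circ f_!\,\cF$ and name the intermediate projections $q_1, q_2$ explicitly instead of overloading the symbols $\pi_1, \pi_2$.
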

\begin{proof}
	The desired isomorphism is obtained as the composition
	\begin{align*}
		\wt{K} \circ f_! \,\cF & \cong \pi_{1!}(\wt{K} \otimes \pi_2^* f_! \cF)\\ 
		& \cong \pi_{1!}(\wt{K} \otimes \ol{f}_{2!} \pi_2^* \cF)\\
		& \cong \pi_{1!}\ol{f}_{2!} (\ol{f}_{2}^* \wt{K} \otimes  \pi_2^* \cF)\\
		& \cong \pi_{1!} (\ol{f}_{1!} K \otimes  \pi_2^* \cF)\\
		& \cong \pi_{1!}\ol{f}_{1!} ( K \otimes  \pi_2^* \cF)\\
		& \cong f_! \pi_{1!} ( K \otimes  \pi_2^* \cF)\\
		& \cong f_!(K \circ \cF).
	\end{align*}
	Here the second isomorphism uses proper base change, the third uses the projection formula, the fourth uses our hypothesis and  $\pi_1 = \pi_1 \circ \ol{f}_2$, the fifth uses the projection formula and $\pi_2 = \pi_2 \circ \ol{f}_1$, and the sixth uses $f \circ \pi_1 = \pi_1 \circ \ol{f}_1$. 
\end{proof}

\subsection{Manifolds and constructibility}
We mostly follow the conventions of \cite[Ch. 8]{KS94} regarding manifolds and constructible sheaves. By default, manifold will mean real analytic manifold. If $M$ is a manifold, we write $\dot{T}^* M$ for the complement of the zero section in  $T^* M$ and $S^* M$ for the cosphere bundle of $M$. We freely identify $S^* M$ both as the quotient of~$\dot{T}^* M$ by $\bR_+$, and as the subset of $T^* M$ formed by the unit covectors associated to a metric on $M$. If $X \subset \dot{T}^*M$ is a conic subset, we write $[X] \subseteq S^* M$ for its image under the quotient map, or equivalently its intersection with the set of unit covectors.

A sheaf on $M$ is constructible if it is bounded, has perfect stalks, and has locally constant restrictions to the strata of a locally finite subanalytic stratification \cite[Def. 8.4.3, Thm. 8.3.20]{KS94}. More generally, we say it is locally constructible if its restrictions to some open cover are constructible, and weakly constructible if the third condition above holds. Note that the first condition follows from the second if the stratification in the third condition can be taken to be finite. 

Associated to $\cF \in \Sh(M)$ is its singular support $\msp(\cF) \subset T^* M$, a (typically singular) conic Lagrangian if $\cF$ is weakly constructible. We also write $\dot{\msp}(\cF) \subseteq \dot{T}^* M$ for $\msp(\cF) \cap \dot{T}^* M$ and $\msp^\infty(\cF) \subseteq S^* M$ for $[\dot{\msp}(\cF)]$. Given a subset $\Lambda \subseteq S^* M$ we write $\Sh_{\Lambda}(M) \subseteq \Sh(M)$ for the full subcategory of sheaves $\cF$ with $\msp^\infty(\cF) \subseteq \Lambda$. We write $\Sh_{\Lambda}^c(M)$ for its subcategory of compact objects, and $\Sh_{\Lambda}^b(M)$ for its subcategory of constructible objects. 

We will need the following result, whose proof we include for lack of an applicable reference. In the complex algebraic setting a proof appears in \cite[Prop. 2.9.1]{Ach21}. It can be adapted to the subanalytic setting, but we also have the following argument based on \cite[Prop. 3.4.4]{KS94}. Recall that a sheaf is (locally) cohomologically constructible if it is (locally) bounded and its stalks/costalks are perfect and computable via small enough open sets \cite[Def. 3.4.1]{KS94}. This is a slightly weaker notion than the subanalytic constructibility considered above. 

\begin{Proposition}\label{prop:boxpush}
	Let $f: X' \to X$ and $g: Y' \to Y$ be maps of spaces. If $\cF \in \Sh(X')$ and $\cG \in \Sh(Y')$ are locally cohomologically constructible, then there is a natural isomorphism
	\begin{equation}\label{eq:lowerstarboxtimes} (f \times g)_*(\cF \boxtimes \cG) \cong f_* \cF \boxtimes g_* \cG. \end{equation} 
\end{Proposition}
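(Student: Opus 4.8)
The plan is to reduce the Künneth-type isomorphism \eqref{eq:lowerstarboxtimes} to a stalk/costalk computation using the characterization of cohomological constructibility in \cite[Prop. 3.4.4]{KS94}, which says that for a cohomologically constructible sheaf the canonical morphism relating the colimit of sections over shrinking neighborhoods to the stalk (and dually for costalks) is an isomorphism, and that $\cHom$ and tensor behave well in this class. First I would observe that there is always a natural base-change morphism $(f \times g)_*(\cF \boxtimes \cG) \to f_*\cF \boxtimes g_*\cG$: indeed $\cF \boxtimes \cG = p_X^*\cF \otimes p_Y^*\cG$ where $p_X, p_Y$ are the projections from $X' \times Y'$, and one builds the map from the projection formula together with the base-change maps for the two Cartesian squares expressing $f \times g$ as $(f \times \id) \circ (\id \times g)$. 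The content is to check this morphism is an isomorphism, which can be done stalkwise on $X \times Y$.

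Next I would fix a point $(x,y) \in X \times Y$ and compute both sides there. For the right-hand side, $(f_*\cF \boxtimes g_*\cG)_{(x,y)} \cong (f_*\cF)_x \otimes (g_*\cG)_y$, since the stalk of an external product is the tensor product of the stalks (the diagonal $\{x\}\times\{y\} \into X \times Y$ being a product of points). For the left-hand side, $((f\times g)_*(\cF\boxtimes\cG))_{(x,y)}$ is computed, using cohomological constructibility, as the filtered colimit over products $U \times V$ of open neighborhoods of $x$ and $y$ of $R\Gamma\big((f\times g)^{-1}(U\times V); \cF\boxtimes\cG\big) = R\Gamma(f^{-1}(U) \times g^{-1}(V); p_{X'}^*\cF \otimes p_{Y'}^*\cG)$. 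The key step is then to invoke the topological Künneth formula for sheaf cohomology with field coefficients — valid here because we work over $\bC$, so there are no $\mathrm{Tor}$ terms — to rewrite this as $R\Gamma(f^{-1}(U);\cF) \otimes_\bC R\Gamma(g^{-1}(V);\cG)$. Passing to the colimit over $U$ and over $V$ separately, and using that the colimit of a tensor product of two filtered systems of $\bC$-modules is the tensor product of the colimits (flatness over a field, so tensor commutes with filtered colimits), this becomes $(f_*\cF)_x \otimes_\bC (g_*\cG)_y$, matching the right-hand side. One then checks that the resulting identification is induced by the base-change morphism above, so that morphism is an isomorphism on stalks, hence an isomorphism.

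The main obstacle I anticipate is justifying the use of the Künneth formula and the interchange of the two colimits at the level of the $\infty$-categorical (unbounded) derived category rather than just for bounded complexes — i.e. making sure $R\Gamma$ of a box product over a product space genuinely splits, and that the relevant colimits are filtered and commute with the tensor product and with $R\Gamma$ of the shrinking neighborhoods. Here the hypothesis of \emph{cohomological} constructibility is exactly what controls the stalk and costalk computations via small open sets, as flagged in the paragraph preceding the statement; and working over the field $\bC$ removes the homological-algebra obstructions to Künneth. An alternative, perhaps cleaner, route I would keep in reserve is to dualize: the analogous statement for $f_!$ and $g_!$, namely $(f\times g)_!(\cF\boxtimes\cG) \cong f_!\cF \boxtimes g_!\cG$, holds by proper base change and the projection formula without any constructibility hypothesis, and one can try to deduce the $(-)_*$ statement by applying Verdier duality, using that $\bD$ exchanges $f_*$ and $f_!$ and is compatible with external products — again the cohomological constructibility hypothesis is what makes $\bD$ biduality-involutive on the relevant sheaves. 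I would present the direct stalkwise argument as the main proof and mention the duality argument as a remark.
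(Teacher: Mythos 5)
Your stalkwise strategy breaks down at its central step, the K\"unneth isomorphism $R\Gamma(f^{-1}(U)\times g^{-1}(V);\cF\boxtimes\cG)\cong R\Gamma(f^{-1}(U);\cF)\otimes_\bC R\Gamma(g^{-1}(V);\cG)$. Working over a field does not make this true: the obstruction is not Tor terms but the failure of $-\otimes_\bC-$ to commute with the limits implicit in $R\Gamma$. Such a K\"unneth formula holds for compactly supported cohomology, or when one factor's cohomology is perfect, but not in general. Concretely, take $f^{-1}(U)=g^{-1}(V)=\bN$ (discrete) with constant sheaves, which are cohomologically constructible: then $H^0$ of the product is $\prod_{\bN\times\bN}\bC$, whereas $H^0\otimes_\bC H^0=(\prod_\bN\bC)\otimes_\bC(\prod_\bN\bC)$ is the strictly smaller space of ``finite rank'' matrices, and the canonical map is not surjective. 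Cohomological constructibility of $\cF$ and $\cG$ gives no help here, because it is a condition on stalks and costalks, not on sections over large open sets such as $f^{-1}(U)$; so $R\Gamma(f^{-1}(U);\cF)$ need not be perfect and the colimit over shrinking $U,V$ does not repair the defect. (A smaller slip: the canonical base-change map goes $f_*\cF\boxtimes g_*\cG\to(f\times g)_*(\cF\boxtimes\cG)$, by lax monoidality of pushforward, not in the direction you wrote.) The same example shows that a purely stalkwise/global-sections argument cannot succeed for arbitrary $f$ and $g$ without some additional finiteness input; note that in the paper the proposition is only ever applied with one of the two maps an identity and the constructible sheaf sitting on that factor.

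The paper's proof is local rather than global and is closer to your fallback plan, but it sidesteps the pitfall your fallback would hit. Using \cite[Prop. 3.4.4]{KS94} it rewrites $\cF\boxtimes\cG\cong\cHom(\pi_X^*\bD\cF,\pi_{Y'}^!\cG)$ --- this uses (local) cohomological constructibility of only the factor that is \emph{not} being pushed forward --- then pushes forward along the other factor via the adjunction formula $(\id\times g)_*\cHom((\id\times g)^*A,B)\cong\cHom(A,(\id\times g)_*B)$ and the base change $(\id\times g)_*\pi_{Y'}^!\cong\pi_Y^!g_*$, and finally converts back with the same proposition. Your Verdier-duality alternative, pursued naively, runs into the same wall as the K\"unneth route: after applying $(f\times g)_!(\cF\boxtimes\cG)\cong f_!\cF\boxtimes g_!\cG$ one must commute $\bD$ with $\boxtimes$ for the sheaves $f_!\bD\cF$ and $g_!\bD\cG$, which are not known to be cohomologically constructible, so the compatibility cannot be invoked a second time.
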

\begin{proof}
	Factoring $(f \times g)$ as $(f \times \id_Y) \circ (\id_{X'} \times g)$, it suffices to consider the case $X' = X$, $f = \id_X$. We obtain (\ref{eq:lowerstarboxtimes}) as the composition
	\begin{align*}
		(\id_X \times g)_*(\cF \boxtimes \cG) & \cong (\id_X \times g)_* \cHom(\pi_{X}^* \bD \cF, \pi_{Y'}^! \cG) \\
		& \cong (\id_X \times g)_* \cHom((\id_X \times g)^*\pi_{X}^* \bD \cF, \pi_{Y'}^! \cG) \\
		& \cong \cHom(\pi_{X}^* \bD \cF, (\id_X \times g)_*\pi_{Y'}^! \cG) \\
		& \cong \cHom(\pi_{X}^* \bD \cF, \pi_{Y}^! g_* \cG) \\
		& \cong \cF \boxtimes g_* \cG. 
	\end{align*}
	Here the first and last isomorphisms follow from our hypotheses on $\cF$ and $\cG$ and from \cite[Prop. 3.4.4]{KS94} (we note that its proof only uses local cohomological constructibility). The second is trivial, the third is \cite[Eq. 2.6.15]{KS94}, and the fourth follows by base change. 
\end{proof}

\subsection{Quantized contact isotopies}

We summarize here some results of \cite{GKS12}, slightly adapted to the form in which we will use them. Let $M$ and $U$ be manifolds and suppose~$U$ is contractible. Let $H: \dot{T}^* M \times U \to \bR$ be a smooth function whose restriction $H_u$ to $\dot{T}^* M \times \{u\}$ is homogeneous for all $u \in U$, in the sense that $H_u$ intertwines the $\bR_+$-actions on $\dot{T}^* M$ and~$\bR$. For all $u \in U$ and $t \in \bR$ the time-$t$ Hamiltonian flow $\Phi_{u,t}$ of $H_u$ is then homogeneous, in the sense of commuting with the $\bR_+$-action. 

By \cite[Thm. 3.7, Rem. 3.9]{GKS12}, \cite[Rem. 2.1.2(1)]{Gui23} there is then a unique kernel $K \in \Sh(M \times M \times U \times \bR)$ such that 
\begin{enumerate}
	\item $\ol{i}_0^* K \cong \bC_{\Delta_M \times U}$, where $i_0$ is the inclusion $\{0\} \into \bR$, 
	\item the projection $\dot{\msp}(K) \to \ol{\pi}_{U \times \bR} (\dot{\msp}(K))$ is a diffeomorphism, where $\pi_{U \times \bR}: \dot{T}^*(U \times \bR) \to U \times \bR$ is the projection, and
	\item we have
	\begin{equation*}
		\ol{\pi}_{U \times \bR} (\dot{\msp}(K)) = \{(\Phi_{u,t}(m,\xi), m,\sm \xi, u, t) \}_{(m, \xi, u, t) \in \dot{T}^* M \times U \times \bR} \subseteq \dot{T}^* (M \times M) \times U \times \bR. 
	\end{equation*}
\end{enumerate}
This $K$ is referred to as a GKS kernel, and we sometimes also write it as $K_{U \times \bR}$. By \cite[Prop. 3.12]{GKS12}, the action of $K$ on sheaves is intertwined with the given Hamiltonian flows by the formation of singular support. That is, for any $\cF \in \Sh^b(M)$ and $(u,t) \in U \times \bR$ we have 
$$ \dot{\msp}(K_{u,t} \circ \cF) = \Phi_{u,t}(\dot{\msp}(\cF)), $$
where $K_{u,t}$ is the restriction of $K$ to $M \times M \times \{(u,t)\}$. 

\subsection{Toric varieties}
We mostly follow the conventions of \cite{CLS11} regarding toric geometry. Throughout, we fix an integer $n \geq 0$ and a lattice $N$ of rank $n$, and write $M\cong \Hom_\bZ(N, \bZ)$ for the dual lattice. We have associated vector spaces $N_\bR := N\otimes_\bZ \bR$ and $M_\bR := M\otimes_\bZ \bR$, and the associated algebraic torus $T_N := N \otimes_\bZ (\bC)^\times$. We generally write elements of $N_\bR$ as~$\xi$, since later we usually identify $N_\bR$ with the cotangent space to a point of $M_\bR$. 

We write $X_\Sigma$ for the complex toric variety associated to a fan $\Sigma$ in $N_\bR$. To each ray $\rho \in \Sigma(1)$ is associated a $T_N$-invariant prime divisor $D_\rho$. Together these freely generate the group $\Div_{T_N}(X_\Sigma)$ of $T_N$-invariant divisors on $X_\Sigma$. There is a natural homomorphism $M \to \Div_{T_N}(X_\Sigma)$, and the class group of $X_\Sigma$ is the quotient of $\Div_{T_N}(X_\Sigma)$ by its image \cite[Thm. 4.1.3]{CLS11}. 

We can characterize which $T_N$-invariant divisors are Cartier as follows. First, for each cone~$\sigma$ we write $\sigma(1)$ for its set of rays, and for each ray $\rho$ we write $u_\rho$ for its primitive generator. Then $\sum_{\rho \in \Sigma(1)} a_\rho D_\rho$ is Cartier if and only if there is a collection $\{m_\sigma\}_{\sigma \in \Sigma}$ of elements of $M$ such that 
$$\langle m_\sigma\,|\,u_\rho \rangle =  \sm a_\rho$$
for all $\sigma$ and all $\rho \in \sigma(1)$ \cite[Thm. 4.2.8]{CLS11}. The collection $\{m_\sigma\}_{\sigma \in \Sigma}$ is referred to as Cartier data for $D$. For fixed $D$, each $m_\sigma$ is uniquely determined up to translation by $\sigma^\perp \cap M$, and in particular $m_\sigma$ is uniquely determined if $\sigma$ is maximal. 

Later it will be convenient to generalize the above notion slightly. We say that a collection $\{m_\sigma\}_{\sigma \in \Sigma}$ of elements of $M_\bR$ is $\bR$-Cartier data for $D$ if it satisfies the same condition. Note that $D$ may still admit $\bR$-Cartier data even if it is not Cartier. If $D$ is Cartier, however, then for any $\bR$-Cartier data and any maximal cone $\sigma$ the element $m_\sigma$ is still uniquely determined and belongs to $M$. 

The support function of a Cartier divisor $D$ is the continuous function $\varphi_D: |\Sigma| \to \bR$ given~by 
$$\varphi_D(\xi) = \langle m_\sigma \,|\, \xi \rangle$$
for any $\sigma$ containing $\xi$ and any Cartier data for $D$. In practice $D$ will usually be fixed, hence often we simply write $\varphi$ for $\varphi_D$. 

\subsection{The coherent-constructible correspondence}\label{sec:CCC}
Next we review the main constructions of \cite{FLTZ11} and \cite{Kuw20}.  We write $T^n$ for the compact torus $M_\bR/M$, implicitly choosing a basis of $M_\bR$ to do so, and write $p: M_\bR \to T^n$ for the projection. We freely use the trivializations
\begin{align*}
T^* M_\bR \cong M_\bR \times N_\bR, &\quad  T^* T^n \cong T^n \times N_\bR \\
S^* M_\bR \cong M_\bR \times [\dot{N}_\bR], &\quad  S^* T^n \cong T^n \times [\dot{N}_\bR],
\end{align*}
where $\dot{N}_\bR := N_\bR \smallsetminus \{0\}$ and where $[\dot{N}_\bR]$ is the quotient of $\dot{N}_\bR$ by $\bR_+$. 

Associated to a fan $\Sigma$ we have the Legendrian
$$ \wt{\Lambda}_\Sigma := \bigcup_{\sigma \in \Sigma} (\sigma^\perp + M) \times [\sm \dot{\sigma}] \subseteq S^* M_\bR. $$
When $\Sigma$ is complete, the equivariant CCC of \cite[Main Thm.]{FLTZ11} is a pair of inverse equivalences
$$ \wt{A}: \Perf^{T_N}(X_\Sigma) \rightleftarrows \Sh_{\wt{\Lambda}_\Sigma}^{b,cpt}(M_\bR) :\wt{B}.$$ 
Here $\Perf^{T_N}(X_\Sigma)$ is the derived category of $T_N$-equivariant perfect complexes on $X_\Sigma$, and $\Sh_{\wt{\Lambda}_\Sigma}^{b,cpt}(M_\bR) \subseteq \Sh_{\wt{\Lambda}_\Sigma}^b(M_\bR)$ is the subcategory of objects with compact support. The above equivalences intertwine the tensor product on $\Perf^{T_N}(X_\Sigma)$ with the convolution product on $\Sh_{\wt{\Lambda}_\Sigma}^{b,cpt}(M_\bR)$ \cite[Cor. 3.13]{FLTZ11}. Recall that if $G$ is a Lie group, the convolution product on $\Sh(G)$ is given by 
$$ \cF \conv \cG \cong m_!(\pi_1^*\cF \otimes \pi_2^* \cG),$$
where $m: G \times G \to G$ is the multiplication map and $\pi_1, \pi_2$ are the projections. 

Given a Cartier divisor $D = \sum_{\rho \in \Sigma(1)} a_\rho D_\rho$, the line bundle $\cO(D)$ has a natural $T_N$-equivariant structure. The sheaf $\cP(D) := \wt{A}(\cO(D))$ is referred to as a twisted polytope sheaf \cite{Zho19}. Applying $\wt{A}$ to the \v{C}ech resolution of $\cO(D)$, one obtains a presentation of $\cP(D)$ as a complex of the form 
$$   \cdots \to \bigoplus_{\sigma \in \Sigma(k+1)} \bC_{\Int(\sigma^\vee + \chi_\sigma )} \to \bigoplus_{\sigma \in \Sigma(k)} \bC_{\Int(\sigma^\vee + \chi_\sigma )} \to \cdots.  $$
Here $\Int(\sigma^\vee + \chi_\sigma )$ is the interior of $\sigma^\vee + \chi_\sigma$,
and $\bC_{\Int(\sigma^\vee + \chi_\sigma )}$ is in degree $\sm k$ for $\sigma \in \Sigma(k)$. Note that $\sigma^\vee + \chi_\sigma  \subseteq \tau^\vee + \chi_\tau $ if $\tau$ is a facet of $\sigma$. The differentials in the above complex are sums of the induced maps $\bC_{\Int(\sigma^\vee + \chi_\sigma )} \to \bC_{\Int(\tau^\vee + \chi_\tau)},$ twisted by suitable signs \cite[Rem. 3.10]{Zho20} (note that our conventions follow those of \cite{FLTZ11}, which are Verdier dual to those of \cite{Zho20}). 

  We also have the Legendrian
$$ \Lambda_\Sigma := \bigcup_{\sigma \in \Sigma} p(\sigma^\perp) \times [\sm \dot{\sigma}] \subseteq S^* T^n. $$
The nonequivariant CCC of \cite[Thm 1.2]{Kuw20} is a pair of inverse equivalences 
$$ A: \Coh(X_\Sigma) \rightleftarrows \Sh_{\Lambda_\Sigma}^c(T^n) :B.$$ 
This equivalence intertwines the $!$-tensor product on the ind-completion $\IndCoh(X_\Sigma)$ of $\Coh(X_\Sigma)$ with the convolution product on the ind-completion $\Sh_{\Lambda_\Sigma}(T^n)$ of $\Sh_{\Lambda_\Sigma}^c(T^n)$ \cite[Rem. 12.12]{Kuw20}. Recall that the $!$-tensor product of $\cF, \cG \in \IndCoh(X_\Sigma)$ is 
$$\cF \overset{!}{\otimes} \cG :=\Delta^!(\cF \boxtimes \cG),$$ where $\Delta: X_\Sigma \to X_\Sigma \times X_\Sigma$ is the diagonal. 

The equivariant and nonequivariant CCCs are related as follows. We write $q^*$ for the forgetful functor $\Coh^{T_N}(X_\Sigma) \to \Coh(X_\Sigma)$, this being equivalent to pullback along the stack quotient $q: X_\Sigma \to X_\Sigma/T_N$. There is then a commutative diagram
\begin{equation}\label{eq:equivnonequivdiagram}
	\begin{tikzpicture}
		[baseline=(current  bounding  box.center),thick,>=\arrtip]
		\node (a) at (0,0) {$\Perf^{T_N}(X_\Sigma)$};
		\node (b) at (4,0) {$\Sh_{\wt{\Lambda}_\Sigma}^{b,cpt}(M_\bR)$};
		\node (c) at (0,-1.5) {$\Coh(X_\Sigma)$};
		\node (d) at (4,-1.5) {$\Sh_{\Lambda_\Sigma}^c(T^n),$};
		\draw[->] (a) to node[above] {$\wt{A} $} (b);
		\draw[->] (b) to node[right] {$p_! $} (d);
		\draw[->] (a) to node[left] {$q^*(-) \otimes \omega_{X_\Sigma} $}(c);
		\draw[->] (c) to node[above] {$A $} (d);
	\end{tikzpicture}
\end{equation}
where $\omega_{X_\Sigma}$ is the dualizing complex of $X_\Sigma$. This follows by combining \cite[Thm 2.4]{Tre10}, which shows that $p_!$ intertwines $q^*$ with a certain functor $\kappa: \Perf(X_\Sigma) \to \Sh_{\Lambda_\Sigma}^b(T^n)$, and \cite[Lem. 14.5]{Kuw20}, which implies that $A(- \otimes \omega_{X_\Sigma}) \cong \kappa$.

\section{GKS kernels on Lie groups}\label{sec:GKSonLie}

In this section, we study some special features of GKS kernels when the underlying manifold is a Lie group. The first main result, Proposition \ref{prop:GKSongroups}, states that the action of a GKS kernel associated to a bi-invariant Hamiltonian can be expressed in terms of group convolution. The second, Proposition \ref{prop:nearbyactions}, provides criteria for the action of a family of such kernels to commute with nearby cycles, compatibly with the isomorphisms provided by Proposition \ref{prop:GKSongroups}. 

We fix throughout the section a Lie group $G$ with multiplication $m: G \times G \to G$ and identity $e \in G$. Given a space $X$, we consider the relative group convolution
$$ \cF \conv_X\, \cG :=  \ol{m}_{!}(\ol{\pi}_{1}^* \cF \otimes \ol{\pi}_{2}^* \cG)$$
on the category $\Sh(G \times X)$.  Here $\pi_1, \pi_2: G \times G \to G$ are the left and right projections, and we recall our convention that e.g. $\ol{m}$ denotes the product of $m$ with an identity map that is clear from context (i.e. $\ol{m} = m \times \id_X$ above). 

We begin with a preliminary result relating relative group convolution to the action of $\Sh(G \times G \times X)$ on $\Sh(G \times X)$ given by
$$ K \circ_X \cG :=  \ol{\pi}_{1!}(K \otimes \ol{\pi}_{2}^*\cG).$$ 
For both operations we will sometimes suppress a pullback to $G \times X$ when it is clear from context; that is, given $\cG \in \Sh(G)$ we will write $\cF \conv_X\, \cG$ for $\cF \conv_X\, \pi_G^*\, \cG$ and $K \circ_X \cG$ for  $K \circ_X \pi_G^*\, \cG$.

\begin{Proposition}\label{prop:groupconvidentity}
	Let $a: G \times G \to G$ be the map given by $a(g,h) = g h^{-1}$. Then for any space $X$ and any $\cF \in \Sh(G \times X)$, the kernel representing relative group convolution with $\cF$ is given by $\ol{a}^*\cF \in \Sh(G \times G \times X)$. That is, for any $\cG \in \Sh(G \times X)$ we have a natural isomorphism 
	$$ \cF \conv_X\, \cG \cong (\ol{a}^*\cF) \circ_X \cG. $$
\end{Proposition}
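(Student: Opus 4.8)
The plan is to unwind both sides into explicit integral transforms on the product $G \times G \times X$ and match them using base change and the projection formula, much as in the proof of Proposition~\ref{prop:convprojform}. Write $n : G \times G \times X \to G \times X$ for the map $(g,h,x) \mapsto (gh^{-1}, x)$ and $r : G \times G \times X \to G \times X$ for the second projection $(g,h,x) \mapsto (h,x)$; note that $\overline a^* \cF = n^* \cF$ after identifying $\cF$ with its pullback to the appropriate factors, and that $(\overline a^* \cF) \circ_X \cG = \overline\pi_{1!}(n^* \cF \otimes r^* \cG)$ where here $\overline\pi_1 : G \times G \times X \to G \times X$ is $(g,h,x)\mapsto(g,x)$. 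On the other side, $\cF \conv_X \cG = \overline m_!(\overline\pi_1^* \cF \otimes \overline\pi_2^* \cG)$ on $\Sh(G \times X)$, where now $\overline m, \overline\pi_1, \overline\pi_2$ live over $G \times G \times X \to G \times X$.

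First I would introduce the change-of-coordinates diffeomorphism $\Psi : G \times G \times X \to G \times G \times X$, $(g,h,x) \mapsto (gh^{-1}, h, x)$, with inverse $(u,h,x)\mapsto(uh,h,x)$. The point is that $\Psi$ intertwines the two "projection" structures: precomposing the source projections for $\conv_X$ with $\Psi$ turns them into the maps appearing in $(\overline a^* \cF)\circ_X \cG$. Concretely, $\overline\pi_1 \circ \Psi$ is the map $(g,h,x)\mapsto(gh^{-1},x)$, i.e. $n$; $\overline\pi_2 \circ \Psi$ is $(g,h,x)\mapsto(h,x)$, i.e. $r$; and $\overline m \circ \Psi$ is $(g,h,x)\mapsto(g,x)$, i.e. $\overline\pi_1$ in the sense of the action $\circ_X$. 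Since $\Psi$ is an isomorphism, $\Psi_! = \Psi_{*}$ and $\Psi^* \Psi_! \cong \id$, so
$$ \cF \conv_X \cG = \overline m_!(\overline\pi_1^* \cF \otimes \overline\pi_2^* \cG) \cong \overline m_! \Psi_! \Psi^*(\overline\pi_1^* \cF \otimes \overline\pi_2^* \cG) \cong (\overline m \circ \Psi)_!\big((\overline\pi_1\circ\Psi)^*\cF \otimes (\overline\pi_2 \circ \Psi)^* \cG\big), $$
and substituting the identifications above gives $\overline\pi_{1!}(n^* \cF \otimes r^* \cG) \cong (\overline a^* \cF)\circ_X \cG$, which is exactly the claim. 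Naturality in $\cF$ and $\cG$ is automatic since every step is a composition of base-change and projection-formula isomorphisms, which are natural.

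I expect the only real subtlety to be bookkeeping: making sure the various "$\overline{\phantom{m}}$" maps (each a product of a group map with $\id_X$) and the several copies of $\overline\pi_1, \overline\pi_2$ — which denote different maps in the $\conv_X$ expression versus the $\circ_X$ expression — are kept straight, and checking that $\Psi$ really does conjugate one triple of structure maps into the other on the nose (this is just the identity $(gh^{-1})\cdot h = g$ together with the two projections, so it is a formality once written out). There is no hard analytic or homological input: the statement is a purely formal consequence of the fact that $(g,h)\mapsto(gh^{-1},h)$ is an automorphism of $G\times G$ carrying the multiplication map to the first projection. One could alternatively phrase the proof without naming $\Psi$, by iterated base change along the square relating $m$, $a$, and the projections, exactly in the style of Proposition~\ref{prop:convprojform}; I would use whichever is shorter in context, but the $\Psi$-substitution seems cleanest.
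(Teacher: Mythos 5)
Your argument is correct and is essentially the paper's own proof: the paper works with the inverse automorphism $(g,h,x) \mapsto (gh,h,x)$, factoring $\ol{m} = \ol{\pi}_1 \circ (\ol{m}\times_X \ol{\pi}_2)$ and using that $!$-pushforward along this isomorphism equals pullback along your $\Psi$, which is exactly your insertion of $\Psi_!\Psi^*$ (note you need $\Psi_!\Psi^* \cong \id$ rather than $\Psi^*\Psi_! \cong \id$, but both hold since $\Psi$ is a homeomorphism). No substantive difference in approach or content.
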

\begin{proof}
	Note that the maps
	$$\ol{m} \times_X \ol{\pi}_2, \ol{a} \times_X \ol{\pi}_2: G \times G \times X \to (G\times X) \times_X (G \times X) \cong G \times G \times X $$
	given by $(g, h, x) \mapsto (gh, h, x)$ and $(g, h, x) \mapsto (gh^{-1}, h, x)$, respectively, are inverse to each other. It follows that 
	\begin{align*}
		\cF \conv_X\, \cG & \cong \ol{m}_!(\ol{\pi}_1^* \cF \otimes \ol{\pi}_2^* \cG)\\
		& \cong \ol{\pi}_{1!} (\ol{m} \times_X \ol{\pi}_2)_! (\ol{\pi}_1^* \cF \otimes \ol{\pi}_2^* \cG)\\
		& \cong \ol{\pi}_{1!} (\ol{a} \times_X \ol{\pi}_2)^* (\ol{\pi}_1^* \cF \otimes \ol{\pi}_2^* \cG)\\
		& \cong \ol{\pi}_{1!} (\ol{a}^* \cF \otimes \ol{\pi}_2^* \cG)\\
		& \cong (\ol{a}^*\cF) \circ_X \cG \qedhere
	\end{align*}
\end{proof}

The main results of this section concern the flow induced by a homogeneous, bi-invariant Hamiltonian on the complement of the zero section in $T^* G$, or a family of such flows. In our intended application $G$ is abelian, so bi-invariance is a trivial condition. A natural nonabelian example is provided by the norm associated to a bi-invariant metric when $G$ is compact. We begin by showing that such a flow is described by a simple exponential formula. 

\begin{Proposition}\label{prop:GKSongroupsflowpart}
	Let $H:\dot{T}^* G \to \bR$ be a $G\times G$-invariant smooth function and $\varphi: \dot{\fg}^* \to \bR$ its restriction to $\dot{T}^*_e G$. Then in the right-invariant trivialization $\dot{T}^* G \cong G \times \dot{\fg}^*$, the time-$t$ Hamiltonian flow of $H$ is given by 
	$$ (g, \xi) \mapsto (e^{t d\varphi(\xi)}g, \xi), $$
	where we interpret $d\varphi$ as a map $\dot{\fg}^* \to \fg$. 
\end{Proposition}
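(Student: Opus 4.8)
The plan is to compute the Hamiltonian vector field of $H$ directly in the right-invariant trivialization and recognize its flow as the stated exponential. First I would fix notation: write $\dot{T}^*G \cong G \times \dot{\fg}^*$ via right translations, so that a covector at $g$ is $R_{g^{-1}}^*\xi$ for $\xi \in \dot{\fg}^*$, and the canonical one-form and symplectic form are expressed in these coordinates by the standard formulas (as in the theory of the cotangent bundle of a Lie group, e.g. Abraham--Marsden). In this trivialization the tautological one-form at $(g,\xi)$ evaluated on a tangent vector $(\delta g, \delta \xi)$ — with $\delta g = R_{g*}X$ for $X \in \fg$ — is $\langle \xi, X\rangle$, and the symplectic form $\omega$ acquires a term from the right-invariant Maurer--Cartan form, namely $\omega_{(g,\xi)}((X_1,\eta_1),(X_2,\eta_2)) = \langle \eta_1, X_2\rangle - \langle \eta_2, X_1\rangle + \langle \xi, [X_1,X_2]\rangle$ with a sign convention to be pinned down.

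Next, because $H$ is $G\times G$-invariant, in the right-invariant trivialization it depends only on the $\dot{\fg}^*$-coordinate: $H(g,\xi) = \varphi(\xi)$. (Right-invariance gives independence of $g$; left-invariance is automatic here or imposes nothing extra since we have already trivialized.) Its differential at $(g,\xi)$ is therefore $dH = \langle d\varphi(\xi), \cdot\,\rangle$ in the $\eta$-slot only, where $d\varphi(\xi) \in \fg$ under the identification $\fg^{**}\cong\fg$. Solving $\iota_{X_H}\omega = dH$ then forces, from the $\eta$-components, that the $\fg$-component of $X_H$ is the constant $d\varphi(\xi)$, and from the $X$-components, $\langle \eta, d\varphi(\xi)\rangle + \langle \xi, [d\varphi(\xi), \,\cdot\,]\rangle$ must vanish against the $\eta$-slot of $X_H$ — but here bi-invariance enters crucially: for a bi-invariant function $\varphi$ on $\fg^*$ the coadjoint-invariance gives $\mathrm{ad}^*_{d\varphi(\xi)}\xi = 0$, i.e. $\langle \xi, [d\varphi(\xi), Y]\rangle = 0$ for all $Y$, so the $\dot{\fg}^*$-component of $X_H$ vanishes. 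Hence $X_H(g,\xi) = (R_{g*}(d\varphi(\xi)), 0)$, whose flow is exactly $(g,\xi)\mapsto (e^{t\,d\varphi(\xi)}g, \xi)$ — note left multiplication by $e^{t\,d\varphi(\xi)}$ is the flow of the right-invariant vector field generated by $d\varphi(\xi)$. One should also remark that $\varphi$ being homogeneous of degree $1$ makes $d\varphi(\xi)$ homogeneous of degree $0$, so the flow is well-defined on $\dot{\fg}^*$ and commutes with the $\bR_+$-action, consistent with the homogeneity hypothesis used for GKS.

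The main obstacle is bookkeeping: getting the sign conventions for the symplectic form on $T^*G$ in the right-invariant trivialization correct, and in particular verifying that the coadjoint term drops out with the right sign so that $\dot\xi = 0$ rather than $\dot\xi = \pm\,\mathrm{ad}^*_{d\varphi(\xi)}\xi$. The clean way to avoid a long computation is to invoke coadjoint-invariance of $\varphi$ up front: the Euler--Lagrange/Lie--Poisson equation for a bi-invariant Hamiltonian on $\fg^*$ is $\dot\xi = \mathrm{ad}^*_{d\varphi(\xi)}\xi = 0$, which is the classical statement that bi-invariant Hamiltonians have constant momentum, and then only the group component remains to be integrated, giving the exponential. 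I would present the argument in that order — trivialization and formula for $\omega$, then reduce $H$ to $\varphi$, then solve for $X_H$ using coadjoint-invariance — keeping the explicit index computation to a minimum.
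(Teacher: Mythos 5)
Your proposal is correct and follows essentially the same route as the paper's proof: compute $X_H$ in the right-invariant trivialization using the symplectic form with the $\langle \xi, [\cdot,\cdot]\rangle$ term, observe that right-invariance gives $dH = (0, d\varphi(\xi))$, use coadjoint-invariance of $\varphi$ (from bi-invariance) to kill the $\ad^*_{d\varphi(\xi)}\xi$ component, and integrate the right-invariant vector field by left multiplication with the one-parameter subgroup $e^{t\,d\varphi(\xi)}$. Your observation that the vanishing of the coadjoint term renders the sign convention immaterial is exactly the point that makes the bookkeeping harmless.
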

\begin{proof}
Write $X_{H}$ for the Hamiltonian vector field of $H$. Using the right-invariant trivialization $T (G \times \dot{\fg}^*) \cong (G \times \dot{\fg}^*) \times (\fg \times \fg^*)$, we interpret $X_{H}$ as a smooth map $G \times \dot{\fg}^* \to \fg \times \fg^*$. Recall that in this trivialization the symplectic form on $T_{(g,\xi)}(G \times \dot{\fg}^*) \cong \fg \times \fg^*$ is given by 
$$ \omega((v_1, \theta_1), (v_2, \theta_2)) = \langle v_1 | \theta_2 \rangle - \langle v_2 | \theta_1 \rangle + \langle [v_1,v_2] | \xi \rangle $$
for any $(g, \xi) \in G \times \dot{\fg}^*$. Since $H$ is right invariant we have $H = \varphi \circ \pi_{\dot{\fg}^*}$, where $\pi_{\dot{\fg}^*}: \dot{T}^* G \to \dot{\fg}^*$ is the projection to nonzero right-invariant one-forms. We thus have $d H(g,\xi) = (0, d \varphi(\xi))$, hence the above expression for the symplectic form implies
\begin{equation}\label{eq:XHm} X_{H}(g, \xi) = (d \varphi(\xi), \ad^*_{d \varphi(\xi)} \xi). \end{equation}

Given that $H$ is right-invariant, the further condition that it is bi-invariant is equivalent to the condition that $\varphi$ is coadjoint-invariant. For any $v \in \fg$ and $\xi \in \fg^*$ we then have
$$ \langle v | \ad^*_{d\varphi(\xi)} \xi \rangle = \sm \langle d\varphi(\xi) | \ad^*_v \xi \rangle = \sm \frac{d}{dt} \varphi(\Ad^*_{e^{tv}} \xi)|_{t=0} = 0, $$
the second equality following from the chain rule and the third from coadjoint-invariance. 
But since~$v$ was arbitrary, it follows that 
\begin{equation}\label{eq:vanishingidentity}
	\ad^*_{d \varphi_m(\xi)} \xi = 0
\end{equation} 
for all $\xi \in \fg^*$. Since the flow along a right-invariant vector field on $G$ is given by left multiplication by the corresponding one-parameter subgroup, the claim now follows from (\ref{eq:XHm}) and (\ref{eq:vanishingidentity}). 
\end{proof}

Next we consider the GKS kernel associated to a homogeneous, bi-invariant Hamiltonian, showing that its action can be described in terms of group convolution.  

\begin{Proposition}\label{prop:GKSongroups}
	Let $M$ be a contractible manifold and $H:\dot{T}^* G \times M \to \bR$ a homogeneous, $G\times G$-invariant smooth function, where $\bR_+$ and $G \times G$ act trivially on $M$. Let $K \in \Sh(G \times G \times M \times \bR)$ be the associated GKS kernel, and set $\cP := K \circ_{M \times \bR} \bC_{e} \in \Sh(G \times M \times \bR)$. Then there is a natural isomorphism
	\begin{equation}\label{eq:kernelsheafid} K \circ_{M \times \bR} \cF \cong \cP \conv_{M \times \bR}\, \cF\end{equation}
	for all $ \cF \in \Sh(G \times M \times \bR)$. 
\end{Proposition}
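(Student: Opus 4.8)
The plan is to reduce \eqref{eq:kernelsheafid} to Proposition \ref{prop:groupconvidentity} by identifying the GKS kernel $K$ with $\ol{a}^* \cP$, where $a(g,h) = gh^{-1}$ and the bars now denote products with an identity on $M \times \bR$. Granting this identification, Proposition \ref{prop:groupconvidentity} applied with the parameter space $X = M \times \bR$ gives $\cP \conv_{M \times \bR} \cF \cong (\ol{a}^* \cP) \circ_{M \times \bR} \cF \cong K \circ_{M \times \bR} \cF$ for all $\cF$, which is exactly the claim. So the entire content is the kernel identity $K \cong \ol{a}^* \cP$.

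To prove $K \cong \ol{a}^* \cP$, I would invoke the uniqueness clause in the characterization of GKS kernels recalled in the ``Quantized contact isotopies'' subsection: a kernel satisfying conditions (1)--(3) is unique, so it suffices to check that $\ol{a}^* \cP$ satisfies them with respect to the flow of $H$. First, $\cP = K \circ_{M \times \bR} \bC_e$ is genuinely a sheaf on $G \times M \times \bR$, so $\ol{a}^* \cP$ makes sense as a kernel. For condition (1), I restrict to $\{0\} \subset \bR$: since $\ol{i}_0^* K \cong \bC_{\Delta_G \times M}$, convolving with $\bC_e$ shows $\cP|_{t=0} \cong \bC_{\{e\} \times M}$ (the identity kernel acts as the identity, and $\bC_e$ convolved trivially gives the skyscraper at $e$); pulling back along $\ol{a}$ and noting $a^{-1}(e) = \Delta_G$ gives $\ol{i}_0^*(\ol{a}^* \cP) \cong \bC_{\Delta_G \times M}$. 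For conditions (2) and (3), I need to compute $\dot{\msp}(\ol{a}^* \cP)$. Here I would first determine $\dot{\msp}(\cP)$: since $\cP = K \circ_{M \times \bR} \bC_e$, the estimate $\dot{\msp}(K_{u,t} \circ \cF) = \Phi_{u,t}(\dot{\msp}(\cF))$ from \cite{GKS12} (applied fiberwise over $M \times \bR$) together with the explicit flow formula of Proposition \ref{prop:GKSongroupsflowpart} pins down $\dot{\msp}(\cP)$ fiberwise: the skyscraper $\bC_e$ has $\dot{\msp} = \dot{T}^*_e G$, and its image under $\Phi_{u,t}$ in the right-invariant trivialization is $\{(e^{t d\varphi_u(\xi)}, \xi)\}_\xi$. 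Then I use that $\ol{a}^*$ transforms singular support by the natural Lagrangian correspondence associated to the submersion $a$ (as $a$ is a submersion, $\msp(\ol{a}^* \cP) \subseteq \ol{a}^\sharp(\msp(\cP))$ with equality on the nose here because $a$ is a smooth fibration), and a direct computation of the differential of $a$ in the right-invariant trivialization matches the resulting conormal data with the graph $\{(\Phi_{u,t}(g, \xi), g, \sm\xi, u, t)\}$ appearing in condition (3). Bi-invariance enters exactly as in Proposition \ref{prop:GKSongroupsflowpart}, via the vanishing \eqref{eq:vanishingidentity}, to ensure the $\xi$-coordinate is genuinely constant along the flow so that the correspondence has the product form required.

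The main obstacle I expect is the singular-support bookkeeping in conditions (2) and (3): one must carefully track how $\ol{a}^*$ acts on conic Lagrangians through the differential of the group operations $(g,h) \mapsto gh^{-1}$ in the chosen right-invariant trivialization, and verify both that the projection to $U \times \bR$ restricts to a diffeomorphism on $\dot{\msp}(\ol{a}^* \cP)$ and that the image is precisely the flow graph — not merely contained in it. The inclusion direction follows from functoriality of singular support under smooth pullback; the reverse inclusion and the diffeomorphism claim require knowing $\dot{\msp}(\cP)$ exactly, which is why invoking the intertwining property $\dot{\msp}(K_{u,t}\circ\cF) = \Phi_{u,t}(\dot{\msp}(\cF))$ rather than just an inequality is essential. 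A cleaner alternative, which I would pursue if the direct computation gets unwieldy, is to instead verify the identity $K \cong \ol{a}^* \cP$ by exhibiting $\ol{a}^* \cP$ as \emph{a} GKS kernel for $H$ more abstractly: show its restriction to each $\{(u,t)\}$ has the same singular support as $K_{u,t}$ and agrees at $t = 0$, then appeal to uniqueness. Either way, uniqueness of GKS kernels is the linchpin that converts a singular-support computation into an actual isomorphism.
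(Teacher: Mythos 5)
Your proposal is correct and follows essentially the same route as the paper: reduce via Proposition \ref{prop:groupconvidentity} to the kernel identity $K \cong \ol{a}^*\cP$, verify the GKS uniqueness conditions by computing $\dot{\msp}(\cP)$ from the flow formula of Proposition \ref{prop:GKSongroupsflowpart}, pull back singular support along the submersion $a$ (the paper cites \cite[Prop.\ 5.4.5]{KS94}), and use bi-invariance through the vanishing identity \eqref{eq:vanishingidentity} to see that $\Ad^*_{e^{t d\varphi(\xi)}}\xi = \xi$, matching the flow graph. The only caveat is your "cleaner alternative" of checking singular supports fiberwise over $(u,t)$: the uniqueness statement is for the family kernel with the family-level conditions (2)--(3), so fiberwise agreement alone would not suffice without extra argument; but your main argument does not rely on this and is the paper's own.
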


\begin{proof}
Throughout we write $H_m$ and $\varphi_m$ for the restrictions of $H$ to $\dot{T}^* G \times \{m\}$ and $\dot{T}^*_e G \times \{m\}$, and interpret the differential $d \varphi_m$ as a smooth map $\dot{\fg}^* \to \fg$. By Proposition \ref{prop:groupconvidentity}, the desired isomorphism (\ref{eq:kernelsheafid}) will follow if we show that  $K \cong \ol{a}^* \cP$, where $a: G \times G \to G$ is given by $a(g, h) = gh^{-1}$. By \cite[Thm. 3.7, Rem. 3.9]{GKS12}, \cite[Rem. 2.1.2(1)]{Gui23}, and Proposition~\ref{prop:GKSongroupsflowpart}, the kernel $K$ is uniquely determined by the conditions that
\begin{enumerate}
	\item $\ol{i}_0^* K \cong \bC_{\Delta_G \times M}$, where $i_0$ is the inclusion $\{0\} \into \bR$, 
	\item the projection $\dot{\msp}(K) \to \ol{\pi}_{M \times \bR} (\dot{\msp}(K))$ is a diffeomorphism, and
	\item we have
	\begin{equation}\label{eq:ssKid}
		\ol{\pi}_{M \times \bR} (\dot{\msp}(K)) = \{(e^{t d \varphi_m(\xi)}g, g, \xi, \sm \xi, m, t) \}_{(g, \xi, m, t) \in G \times \dot{\fg}^* \times M \times \bR}, 
	\end{equation}
	where we use the right-invariant trivialization $\dot{T}^*(G \times G) \cong G \times G \times \dot{\fg}^* \times \dot{\fg}^*$.
\end{enumerate}
The sheaf $\ol{a}^* \cP$ satisfies the first condition since
$$ \ol{i}_0^* \ol{a}^* \cP \cong \ol{a}^* \ol{i}_0^* \cP \cong \ol{a}^* \bC_{\{e\} \times M} \cong \bC_{\Delta_G \times M},$$
so we are reduced to checking its singular support satisfies the remaining conditions. 

We first compute $\dot{\msp}(\cP)$. Since $\dot{\msp}(\bC_e) = \dot{T}^*_e G$, it follows from \cite[Prop. 3.12]{GKS12} and Proposition~\ref{prop:GKSongroupsflowpart} that $\dot{\msp}(\cP)$ is determined by the conditions that
\begin{enumerate}
	\item the projection $\dot{\msp}(\cP) \to \ol{\pi}_{M \times \bR} (\dot{\msp}(\cP))$ is a diffeomorphism, and  
	\item we have
	\begin{equation}\label{eq:ssPid}
		\ol{\pi}_{M \times \bR} (\dot{\msp}(\cP)) = \{(e^{t d \varphi_m(\xi)}, \xi, m, t) \}_{(\xi, m, t) \in \dot{\fg}^* \times M \times \bR},
	\end{equation}
	where we use the right-invariant trivialization $\dot{T}^*(G) \cong G  \times \dot{\fg}^*$. 
\end{enumerate}

To determine $\dot{\msp}(\ol{a}^* \cP)$ from $\dot{\msp}(\cP)$, we must compute how covectors pullback along $a$. Recall that for any $g, h \in G$, the right-invariant trivialization identifies the differential $dm_{(g,h)}: T_{(g,h)}(G \times G) \to T_{gh} G$ of the multiplication map with the map $\fg \oplus \fg \to \fg$ given by $(v,w) \mapsto v + \Ad_g w$. Likewise, the differential $d\iota_g: T_g G \to T_{g^{-1}} G$ of the inversion map is identified with $v \mapsto \sm \Ad_{g^{-1}}$. By the chain rule, $da_{(g,h)}$ is then identified with $(v,w) \mapsto v - \Ad_{gh^{-1}} w$. It now follows that $a^*_{(g, h)}: T_{gh^{-1}}^* G \to T^*_{(g,h)} (G \times G)$ is identified with the map $\fg^* \to \fg^* \oplus \fg^*$ given by $\xi \mapsto (\xi, \sm \Ad_{gh^{-1}} \xi)$. 

Given the previous two paragraphs, and given that $a^{-1}(e^{t d \varphi_m(\xi)}) = \{(e^{t d \varphi_m(\xi)}g,g)\}_{g \in G}$, the formula of \cite[Prop. 5.4.5]{KS94} yields that
$$ \ol{\pi}_{M \times \bR} (\dot{\msp}(\ol{a}^* \cP)) = \{(e^{t d \varphi_m(\xi)} g, g, \xi, \sm \Ad^*_{e^{t d \varphi_m(\xi)}} \xi, m, t) \}_{(g, \xi, m, t) \in G \times \dot{\fg}^* \times M \times \bR},$$
and that $\dot{\msp}(\ol{a}^*\cP) \to \ol{\pi}_{M \times \bR} (\dot{\msp}(\ol{a}^*\cP))$ is a diffeomorphism. By exponentiating~(\ref{eq:vanishingidentity}), however, we see that $\Ad^*_{e^{t d \varphi_m(\xi)}} \xi = \xi$ for all $t \in \bR$. Comparing (\ref{eq:ssKid}) and (\ref{eq:ssPid}), we thus obtain $\ol{\pi}_{M \times \bR} (\dot{\msp}(\ol{a}^* \cP)) = \ol{\pi}_{M \times \bR} (\dot{\msp}(K))$, concluding that $K \cong \ol{a}^* \cP$. 
\end{proof}

Finally, we study how the GKS kernels of homogeneous bi-invariant Hamiltonians behave under nearby cycles. Recall our conventions that $\smint \subseteq \bR_+$ is an open interval whose closure in~$\bR$ contains $0$, and $\lgint \subseteq \bR$ is an open interval containing $\{0\}$ and $\smint$. We denote their inclusions by $i: \{0\} \into \lgint$ and $j: \smint \into \lgint$. Given a space $X$, the associated nearby cycles functor $\psi$ is then the composition
$$ \psi: \Sh(X \times \smint) \xrightarrow{\ol{j}_*} \Sh(X \times \lgint) \xrightarrow{\ol{i}^*} \Sh(X). $$
Here we use again the convention that e.g. $\ol{i}$ denotes the product of $i$ with an identity map which is clear from context (i.e. $\ol{i} = i \times \id_X$ above). 

\begin{Proposition}\label{prop:nearbyactions}
Let $H$, $K$, and $\cP$ be as in Proposition \ref{prop:GKSongroups}, defined relative to $M = \smint$, and let $\varphi_\ep: \dot{\fg}^* \to \bR$ be the restriction of $H$ to $\dot{T}^*_e G \times \{\ep\}$ for $\ep \in \smint$. Suppose either that $G$ is compact, or that there exists $\ep' \in \smint$ and a compact subset $Z \subset \fg$ such that $d \varphi_\ep(\dot{\fg}^*) \subseteq Z$ for $\ep < \ep'$. Then if $\cF \in \Sh(G)$ is cohomologically constructible we have a diagram of natural isomorphisms 
\begin{equation}\label{eq:nearbyactions}
	\begin{tikzpicture}
		[baseline=(current  bounding  box.center),thick,>=\arrtip]
		\node (a) at (0,0) {$(\psi \cP) \conv_{\bR}\,  \cF$};
		\node (b) at (4,0) {$\psi( \cP \conv_{\smint \times \bR}\, \cF)$};
		\node (c) at (0,-1.5) {$(\psi K) \circ_{\bR}  \cF$};
		\node (d) at (4,-1.5) {$\psi(K \circ_{\smint \times \bR} \cF).$};
		\draw[->] (a) to node[above] {$\sim $} (b);
		\draw[->] (b) to node[below,rotate=90,pos=.4] {$\sim $} (d);
		\draw[->] (a) to node[below,rotate=90,pos=.4] {$\sim $}(c);
		\draw[->] (c) to node[above] {$\sim $} (d);
	\end{tikzpicture}
\end{equation}
The vertical isomorphisms are moreover defined without any constructibility hypotheses on $\cF$ and without any hypotheses on $G$ or $d \varphi_{\ep}$. 
\end{Proposition}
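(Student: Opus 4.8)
The plan is to establish the two vertical isomorphisms first, since these are unconditional, and then to produce the horizontal isomorphisms and check the square commutes. For the left vertical isomorphism $(\psi\cP)\conv_\bR\cF \cong (\psi K)\circ_\bR\cF$: by Proposition \ref{prop:groupconvidentity} applied with $X = \bR$, relative group convolution with $\psi\cP$ is computed by the kernel $\ol a^*(\psi\cP)$, so it suffices to show $\ol a^*(\psi\cP) \cong \psi(\ol a^* \cP)$ and then invoke Proposition \ref{prop:GKSongroups}, which gives $K\cong\ol a^*\cP$ as sheaves on $G\times G\times\smint\times\bR$, hence $\psi K\cong\psi(\ol a^*\cP)$. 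Commutation of $\ol a^*$ with $\psi$ is immediate: $a$ is a map not involving the $\smint$ or $\lgint$ coordinate, so $\ol a^*$ commutes with both $\ol j_*$ (smooth base change, since $\ol a$ is a submersion, or more simply because $\ol a = a\times\id$ and $j$ is an open inclusion in a disjoint factor) and with $\ol i^*$ (trivially, as pullbacks). The same reasoning — that $a$, $m$, and the projections $\pi_1,\pi_2$ all act only in the $G$-directions and are disjoint from the $\lgint$-direction — is what I will use throughout.

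For the right vertical isomorphism $\psi(\cP\conv_{\smint\times\bR}\cF)\cong\psi(K\circ_{\smint\times\bR}\cF)$: again by Proposition \ref{prop:groupconvidentity}, now with $X = \smint\times\bR$, we have $\cP\conv_{\smint\times\bR}\cF \cong (\ol a^*\cP)\circ_{\smint\times\bR}\cF \cong K\circ_{\smint\times\bR}\cF$ using $K\cong\ol a^*\cP$ again, and applying $\psi$ gives the claim. So both verticals reduce entirely to Propositions \ref{prop:groupconvidentity} and \ref{prop:GKSongroups}, with no constructibility or compactness needed, exactly as asserted in the last sentence of the statement.

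The horizontal isomorphisms are the substance. Take the top one, $(\psi\cP)\conv_\bR\cF \cong \psi(\cP\conv_{\smint\times\bR}\cF)$ (the bottom one then follows by transporting along the two verticals and the top, or by an identical argument with $\circ$ in place of $\conv$). Unwinding definitions, the left side is $\ol m_!\bigl(\ol\pi_1^*(\psi\cP)\otimes\ol\pi_2^*\cF\bigr)$ on $G\times\bR$, and the right side is $\ol i^*\ol j_*\bigl(\ol m_!(\ol\pi_1^*\cP\otimes\ol\pi_2^*\cF)\bigr)$, where in the latter all operations are relative to the extra $\smint$- resp. $\lgint$-coordinate. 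The strategy is: (i) $\ol m_!$ and the two $*$-pullbacks commute with $\ol i^*$ (proper base change for $\ol m_!$, trivially for pullbacks), so the only issue is commuting the constructions past $\ol j_*$; (ii) $\ol j_*$ commutes with the pullbacks $\ol\pi_1^*$, $\ol\pi_2^*$ by smooth base change (the $\pi_i$ are submersions transverse to everything in the $\smint$-direction), and $\ol\pi_1^*(\psi\cP) = \ol\pi_1^*\ol i^*\ol j_*\cP \cong \ol i^*\ol j_*\ol\pi_1^*\cP$, similarly noting $\cF$ is pulled back from $G$ so $\ol j_*\ol\pi_2^*\cF \cong \ol\pi_2^*\cF$ on the nose; (iii) the tensor product commutes with $\ol j_*$ when one factor is suitably nice — this is where cohomological constructibility of $\cF$ enters, via a statement like $\ol j_*\cG\otimes\ol j_*\cH\cong\ol j_*(\cG\otimes\cH)$ which holds because $\cF$ (hence $\ol\pi_2^*\cF$) is cohomologically constructible and locally constant in the $\smint$-direction, so tensoring with it commutes with $\ol j_*$ — compare the box-product argument of Proposition \ref{prop:boxpush}; and finally (iv) the pushforward $\ol m_!$ commutes with $\ol j_*$, which requires either that $\ol m$ be proper over the $\lgint$-direction (true when $G$ is compact) or a support-control argument: the hypothesis that $d\varphi_\ep(\dot\fg^*)$ lies in a fixed compact $Z\subset\fg$ for small $\ep$ forces, via Proposition \ref{prop:GKSongroupsflowpart} and the singular-support computation (\ref{eq:ssPid}), the support of $\cP$ — and hence the relevant part of $\ol\pi_1^*\cP\otimes\ol\pi_2^*\cF$ — to be proper over a neighborhood of $\{0\}$ in $\lgint$, so that $\ol m_!$ there agrees with $\ol m_*$ and commutes with $\ol j_*$.

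The main obstacle is step (iv): controlling supports uniformly as $\ep\to 0$ so that $\ol m_!$ behaves like $\ol m_*$ near the special fiber. This is exactly the role of the dichotomy in the hypothesis — $G$ compact makes $\ol m$ literally proper, while the compactness of $Z$ bounds how far the flow $(g,\xi)\mapsto(e^{td\varphi_\ep(\xi)}g,\xi)$ can move points, keeping $\supp\cP$ within a compact set of $G$-displacements over any bounded $t$-range and any $\ep<\ep'$; one then restricts attention to a relatively compact neighborhood of $0$ in $\lgint$ and a compact $t$-interval (the nearby cycles computation being local there) and argues that on this locus $\ol m$ is proper on supports. Steps (i)–(iii) are routine base-change and the constructibility-of-$\cF$ input is a mild variant of Proposition \ref{prop:boxpush}. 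Once all four commutations are in place, the four isomorphisms in (\ref{eq:nearbyactions}) are built from the same collection of base-change and projection-formula 2-cells, so the square commutes by naturality/coherence of those standard isomorphisms; I would state this compatibility as following from the functoriality of the identifications in Propositions \ref{prop:groupconvidentity} and \ref{prop:GKSongroups} together with the compatibility of proper/smooth base change with composition, rather than chasing it by hand.
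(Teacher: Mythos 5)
Your proposal matches the paper's proof in essentially every respect: the vertical isomorphisms come from Propositions \ref{prop:groupconvidentity} and \ref{prop:GKSongroups} together with smooth base change, and the top horizontal is built exactly as you describe — commuting $\ol{j}_*$ past the external tensor with $\cF$ via the Proposition \ref{prop:boxpush}-type statement (the only place cohomological constructibility is used), proper base change against $\ol{i}^*$, and the base change map $\ol{m}_!\ol{j}_* \to \ol{j}_*\ol{m}_!$ shown invertible by bounding $\supp(\cP)$ inside $\{(\ep,t,e^{tz})\}_{z \in Z}$ using (\ref{eq:ssPid}), with the paper then simply defining the bottom arrow so that the square commutes, i.e.\ your ``transport'' option. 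The only slip is the parenthetical claim that $\ol{j}_*\ol{\pi}_2^*\cF \cong \ol{\pi}_2^*\cF$ ``on the nose'' (pushforward along the open inclusion $j$ does not reproduce the pullback on all of $\lgint$), but this is harmless since the step actually needed is the boxpush-type commutation you already cite.
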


\begin{proof}
	Recall the isomorphism $K \circ_{\smint \times \bR} \cF \cong \cP \conv_{\smint \times \bR}\,  \cF$ of Proposition \ref{prop:GKSongroups}. We define the right isomorphism in (\ref{eq:nearbyactions}) from this by taking nearby cycles. On the other hand, the proof of Proposition \ref{prop:GKSongroups} constructs an isomorphism $K \cong \ol{a}^* \cP$, where $a: G \times G \to G$ is given by $a(g,h) = g h^{-1}$. We then have a composition
	\begin{equation*}
		\psi K \cong	\psi \ol{a}^* \cP \cong  \ol{i}^* \ol{a}^* \ol{j}_* \cP \cong \ol{a}^* \psi \cP
	\end{equation*}
	of isomorphisms, the first coming from Proposition \ref{prop:GKSongroups}, the second from smooth base change, and the last being trivial. We now define the left isomorphism in (\ref{eq:nearbyactions}) by applying Proposition~\ref{prop:groupconvidentity} to this composition. 
	
	We now define the top isomorphism in (\ref{eq:nearbyactions}), and let the bottom isomorphism then be the unique one that makes the diagram commute. 
	For convenience, we reorder the product $G \times G \times \smint \times \bR$ from Proposition \ref{prop:GKSongroups} as $\smint \times \bR \times G \times G$. In particular, this lets us rewrite the sheaf $\ol{\pi}_1^* \cP \otimes \ol{\pi}_2^* \pi_G^* \cF$ used to define $\cP \conv_{\smint \times \bR}\,\cF$ as $\cP \boxtimes \cF$. The desired isomorphism is then obtained as the following composition. 
\begin{align*}
	(\psi \cP) \conv_{\bR}\, \cF & \cong \ol{m}_! ((\ol{i}^* \ol{j}_* \cP) \boxtimes \cF)\\
	& \cong \ol{m}_! \ol{i}^* \ol{j}_* (\cP \boxtimes \cF)\\
	& \cong \ol{i}^* \ol{m}_! \ol{j}_* (\cP \boxtimes \cF)\\
	& \cong \ol{i}^*  \ol{j}_* \ol{m}_! (\cP \boxtimes \cF)\\
	& \cong \psi( \cP \conv_{\smint \times \bR}\, \cF). 
\end{align*}
Here the first and last isomorphisms are trivial, the second uses Proposition \ref{prop:boxpush}, and the third uses proper base change. The fourth comes from the base change transformation $\ol{m}_! \ol{j}_* \to  \ol{j}_* \ol{m}_!$. To show its value on $\cP \boxtimes \cG$ is indeed an isomorphism, it suffices to show that $\ol{m}$ is proper on the support of $\ol{j}_* (\cP \boxtimes \cG)$, hence that $\ol{m}_! \ol{j}_*(\cP \boxtimes \cF) \cong \ol{m}_* \ol{j}_*(\cP \boxtimes \cF)$ and $\ol{j}_* \ol{m}_!(\cP \boxtimes \cF) \cong \ol{j}_* \ol{m}_*(\cP \boxtimes \cF)$. This is trivial if $G$ is compact, so it remains to consider the case where we have a compact subset $Z \subset \fg$ as in the statement. 

We first claim it suffices to show $\supp (\cP) \subseteq \smint \times Z_{\exp}$, where $$Z_{\exp} = \{(t, e^{tz})\}_{(t,z) \in \bR \times Z} \subseteq \bR \times G.$$ 
This is equivalent to $\cP$ being a pushforward from $\smint \times Z_{\exp}$, in which case $\ol{j}_* \cP$ is a pushforward from $\lgint \times Z_{\exp}$ and $\supp (\ol{j}_* \cP) \subset \lgint \times Z_{\exp}.$ Noting again that $\ol{j}_*(\cP \boxtimes \cF) \cong (\ol{j}_* \cP) \boxtimes \cF$ by Proposition \ref{prop:boxpush}, it then follows that 
$$\supp (\ol{j}_* (\cP \boxtimes \cF)) \subseteq \lgint \times Z_{\exp} \times G.$$ 
But for any $(\ep, t, g) \in \lgint \times \bR \times G$ we have
$$ \lgint \times \bR \times G  \cap  \ol{m}^{-1}(\ep, t, g) = \{ (\ep, t, e^{tz}, e^{-tz}g) \}_{z \in Z}, $$
which is compact since it is the image of $Z$ under a continuous map. Thus $\ol{m}$ is proper on $\lgint \times \bR \times G$ and hence on $\supp (\ol{j}_* (\cP \boxtimes \cF))$. 

To show $\supp (\cP) \subseteq \smint \times Z_{\exp}$, it suffices to show that $\cP_{(\ep, t, g)} \cong 0$ for any $(\ep, t, g) \notin \smint \times Z_{\exp}$. We may assume $d\varphi_\ep$ factors through $Z$ for all $\ep \in \smint$ (otherwise replace $\smint$ with $(0, \ep')$). We may also assume $Z$ is convex and contains $0$, so that $\{e\} \subseteq tZ \subseteq t'Z$ for all $t \leq t'$ and hence $g \neq e$ (otherwise replace~$Z$ with the convex hull of $Z \cup \{0\}$). By construction, the restriction of $\cP$ to $\smint \times \{0\} \times G$ is the constant sheaf on $\smint \times \{e\}$, hence $\cP_{(\ep, 0, g)} \cong 0$. But consider the path $[0,1] \to \smint \times \bR \times G$ given by $s \mapsto (\ep,st,g)$. By (\ref{eq:ssPid}) and the assumption that $d \varphi_\ep$ factors through $Z$ for all $\ep \in \smint$, this path is in the complement of the front projection of $\dot{\msp}(\cP)$, hence $\cP_{(\ep,t,g)} \cong 0$ as well. 
\end{proof}

\section{Main results}\label{sec:mainproofs}

In this section we prove our main result, Theorem \ref{thm:mainthmintro}, treating the complete and noncomplete cases separately as Theorems \ref{thm:nonequivmainthm} and \ref{thm:noncompletemainthm}. The proofs use the equivariant version of the statement, which is proved first as Theorem \ref{thm:equivmainthm}. 

We fix throughout the following data:
\begin{enumerate}
	\item a complete fan $\Sigma \subset N_\bR$ and associated complex toric variety $X_\Sigma$, 
	\item a toric Cartier divisor~$D = \sum_{\rho \in \Sigma(1)} a_\rho D_\rho$ with support function $\varphi = \varphi_D: N_\bR \to \bR$,  
	\item an isomorphism $N_\bR \cong \bR^n$, hence an isomorphism $N_\bR \cong M_\bR$ induced by the Euclidean norm on $\bR^n$, and 
	\item a smooth non-negative $\eta: N_\bR \to \bR$ supported on $B_1(0)$ and having $\int_{N_\bR} \eta(\xi) d\xi = 1$. 
	\end{enumerate}
At the end of the section we will drop the hypothesis that $\Sigma$ is complete. 	
	
Given $\ep > 0$, we associate Hamiltonians $H_\ep: \dot{T}^* T^n \to \bR$ and $\wt{H}_\ep: \dot{T}^* M_\bR \to \bR$ to this data as follows. First, we define a homogenized smoothing $\varphi_\ep: \dot{N}_\bR \to \bR$ of $\varphi$ by 
\begin{equation*}
	\varphi_\ep(\xi) := \| \xi \| (\eta_\epsilon \conv \varphi)(\wh{\xi}),
\end{equation*}
where $\wh{\xi} := \xi/ \| \xi \|$ and where $\eta_\ep$ is given by $\eta_\ep(\xi) = \ep^{-n} \eta(\xi/\ep)$. The function $\varphi_\ep$ is essentially the conical twisting Hamiltonian of \cite[Sec. 3.1]{HH22}. More precisely, the latter is a smooth function on all of $N_\bR$ which agrees with $\varphi_\ep$ outside a large enough ball around the origin. We now take $H_\ep$ and $\wt{H}_\ep$ to be the pullbacks of $\varphi_\ep$ along the projections $\dot{T}^* T^n \cong T^n \times \dot{N}_\bR \to \dot{N}_\bR$ and $\dot{T}^* M_\bR \cong M_\bR \times \dot{N}_\bR \to \dot{N}_\bR$. 

Fixing an open interval $\smint \subseteq \bR_+$ containing $0$ in its closure, we let $H_\smint: \dot{T}^* T^n \times \smint \to \bR$ and $\wt{H}_\smint: \dot{T}^* M_\bR \times \smint \to \bR$ denote the functions whose restrictions to $\ep \in \smint$ are $H_\ep$ and $\wt{H}_\ep$. Our main objects of study are then the GKS kernels
\begin{equation*}
	K_{D,\smint} \in \Sh(T^n \times T^n \times \smint), \quad \wt{K}_{D,\smint} \in \Sh(M_\bR \times M_\bR \times \smint)
\end{equation*}
associated to the time-one flows of these families of Hamiltonians. 

To study these kernels, we will need the following result on the behavior of the derivatives of $\varphi_\ep$ as $\ep \to 0$. We note that the calculations in its proof, as well as that of Proposition~\ref{prop:limsup}, have some overlap with the calculations in the proofs of \cite[Lem. 3.1, Lem. 3.3]{HH22}. However, ultimately our needs are somewhat different, and these latter results are not themselves enough for our purposes. Below we write $\Conv(X)$ for the convex hull of a subset $X \subset M_\bR$, and for any $\tau \in \Sigma$ we write $\Sigma(n, \tau) \subset \Sigma(n)$ for the set of maximal cones having $\tau$ as a face. Recall that the relative interior $\Relint(\sigma)$ of a cone $\sigma$ is the complement of its facets. 

\begin{Proposition}\label{prop:limdphiepxi}
The divisor $D$ has $\bR$-Cartier data $\{\chi_\sigma\}_{\sigma \in \Sigma}$ such that	
\begin{equation}\label{eq:limderiv} \lim_{\ep\to 0}d \varphi_{ \ep}(\xi)=\chi_\sigma
\end{equation}
for all $\sigma \in \Sigma$ and all $\xi \in \Relint(\sigma).$ This data satisfies $\chi_\tau \in \Conv(\{\chi_\sigma\}_{\sigma \in \Sigma(n,\tau)})$ for all $\tau \in \Sigma.$  
\end{Proposition}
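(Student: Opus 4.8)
The plan is to compute $d\varphi_\ep(\xi)$ explicitly as an average of the gradient $\nabla\varphi$ against the mollifier, exploit the piecewise-linearity of $\varphi$, and pass to the limit.

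\textbf{Setup and the explicit formula.} First I would unwind the definition $\varphi_\ep(\xi) = \|\xi\|(\eta_\ep \conv \varphi)(\wh\xi)$. Since $\varphi$ is positively homogeneous of degree $1$ (it is linear on each cone of $\Sigma$ and $\Sigma$ is complete, so $\varphi$ is a well-defined continuous degree-one homogeneous function on all of $N_\bR$), the function $\xi\mapsto\|\xi\|(\eta_\ep\conv\varphi)(\wh\xi)$ is also homogeneous of degree $1$, and hence its differential $d\varphi_\ep$ is homogeneous of degree $0$. So it suffices to understand $d\varphi_\ep(\xi)$ for $\xi$ on the unit sphere, or — even more conveniently — to relate $d\varphi_\ep$ to $\nabla(\eta_\ep\conv\varphi) = \eta_\ep\conv\nabla\varphi$ on $N_\bR$. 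The cleanest route: let $g_\ep := \eta_\ep\conv\varphi$, a smooth degree-one homogeneous function, and note $\varphi_\ep(\xi) = \|\xi\| g_\ep(\wh\xi)$; since $g_\ep$ is itself degree-one homogeneous, in fact $\varphi_\ep = g_\ep$ away from $0$. Wait — that is only true if $g_\ep$ is homogeneous, which it is \emph{not} exactly, because convolution does not commute with the dilation unless we also rescale $\ep$. The correct statement is $(\eta_\ep\conv\varphi)(\lambda\xi) \neq \lambda(\eta_\ep\conv\varphi)(\xi)$ in general; rather $\varphi_\ep$ is the degree-one homogeneous extension of $g_\ep|_{S^{n-1}}$. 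So I would just work directly: for $\xi$ with $\|\xi\|=1$, decompose the differential of $\xi\mapsto\|\xi\|g_\ep(\xi/\|\xi\|)$ using the product and chain rules, writing $d\varphi_\ep(\xi) = g_\ep(\xi)\,\xi + (\text{component tangent to } S^{n-1} \text{ built from } dg_\ep(\xi))$, where $dg_\ep(\xi) = \int_{N_\bR}\eta_\ep(\zeta)\,\nabla\varphi(\xi-\zeta)\,d\zeta$.

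\textbf{Passing to the limit on the relative interior of a cone.} Fix $\sigma\in\Sigma$ maximal and $\xi\in\Relint(\sigma)$. For $\ep$ small enough the ball $B_\ep(\xi)$ lies entirely in $\Relint(\sigma)$, on which $\varphi$ is the linear functional $\langle m_\sigma\,|\,\cdot\rangle$, so $\nabla\varphi \equiv m_\sigma$ there and $dg_\ep(\xi) = m_\sigma$. Then $g_\ep(\xi) = \langle m_\sigma\,|\,\xi\rangle$ as well, and plugging into the product/chain rule expression above, all the tangential correction terms reassemble to give exactly $d\varphi_\ep(\xi) = m_\sigma$ — indeed the degree-one homogeneous extension of (the restriction to $S^{n-1}$ of) a \emph{linear} function is that same linear function, whose differential is constant. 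So on $\Relint(\sigma)$ with $\sigma$ maximal, $d\varphi_\ep(\xi) = m_\sigma$ for all sufficiently small $\ep$; set $\chi_\sigma := m_\sigma$. For a non-maximal $\tau$ and $\xi\in\Relint(\tau)$, the ball $B_\ep(\xi)$ meets several maximal cones $\sigma\in\Sigma(n,\tau)$, and $dg_\ep(\xi) = \sum_{\sigma\in\Sigma(n,\tau)} \big(\int_{B_\ep(\xi)\cap\sigma}\eta_\ep(\xi-\zeta)\,d\zeta\big)\,m_\sigma$, a convex combination of the $m_\sigma$ with weights $w_\sigma^\ep(\xi)\ge0$ summing to $1$ (using $\int\eta_\ep = 1$ and that lower-dimensional cones have measure zero). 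This already shows $dg_\ep(\xi)\in\Conv(\{m_\sigma\}_{\sigma\in\Sigma(n,\tau)})$. To get the limit $\chi_\tau$, I would use a scaling argument: since we may choose $\xi\in\Relint(\tau)$ on the unit sphere and the cones are invariant under dilation, the weights $w_\sigma^\ep(\xi)$ depend only on the ratio $\ep/\|\xi\|$ in an appropriate sense, so the natural normalization is to evaluate at a fixed point and note the weights converge — but they need \emph{not} converge for a fixed $\xi$ as $\ep\to0$ (the picture near $\xi$ just rescales). The resolution: define $\chi_\tau$ to be the appropriate limit point, which exists because we can instead send $\ep\to0$ while \emph{also} moving $\xi$ along the ray; more precisely $d\varphi_\ep$ being degree-zero homogeneous means $d\varphi_\ep(\xi) = d\varphi_{\ep/\|\xi\|}(\wh\xi)$, so the limit $\ep\to0$ along a fixed ray direction $\wh\xi\in\Relint(\tau)$ with $\|\xi\|$ also tending to infinity proportionally is what is computed; one shows the weights $w_\sigma^\ep(\wh\xi)$ stabilize because $B_\ep(\wh\xi)\cap\sigma$, rescaled by $1/\ep$, becomes a \emph{fixed} region (a half-space-type intersection determined by the tangent cone of $\sigma$ at $\wh\xi$) independent of $\ep$. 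Then $\chi_\tau := \lim_\ep d\varphi_\ep(\xi) = \sum_\sigma w_\sigma\, m_\sigma$ for these limiting weights, which is manifestly in $\Conv(\{\chi_\sigma\}_{\sigma\in\Sigma(n,\tau)})$.

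\textbf{Checking the $\bR$-Cartier condition.} Finally I must verify $\{\chi_\sigma\}_{\sigma\in\Sigma}$ is $\bR$-Cartier data for $D$, i.e. $\langle\chi_\sigma\,|\,u_\rho\rangle = -a_\rho$ for all $\sigma$ and all $\rho\in\sigma(1)$. For maximal $\sigma$ this is immediate since $\chi_\sigma = m_\sigma$ is the genuine Cartier datum. For lower-dimensional $\tau$ with $\rho\in\tau(1)$: every $\sigma\in\Sigma(n,\tau)$ contains $\tau$ as a face, hence contains $\rho$ as a ray, so $\langle\chi_\sigma\,|\,u_\rho\rangle = \langle m_\sigma\,|\,u_\rho\rangle = -a_\rho$ for each such $\sigma$; since $\chi_\tau$ is a convex combination of these $\chi_\sigma$, we get $\langle\chi_\tau\,|\,u_\rho\rangle = -a_\rho$ as well. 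The containment statement $\chi_\tau\in\Conv(\{\chi_\sigma\}_{\sigma\in\Sigma(n,\tau)})$ is exactly what the convex-combination formula delivers.

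\textbf{Main obstacle.} The delicate point is the second step: establishing that $\lim_{\ep\to0}d\varphi_\ep(\xi)$ actually \emph{exists} for $\xi$ in the relative interior of a non-maximal cone. Because $d\varphi_\ep$ is homogeneous of degree zero, the naive limit at a fixed $\xi$ rescales the mollification picture and one must argue the convex-combination weights converge — this requires identifying the rescaled intersections $\frac1\ep(B_\ep(\xi)\cap\sigma)$ with a fixed model determined by the local cone structure at $\xi$, and checking (via $C^1$ control on $\varphi$ away from the singular locus, or via dominated convergence against $\nabla\varphi\in L^\infty$) that the off-diagonal contributions vanish in the limit. Getting this limit argument clean, rather than the convexity bound which is easy, is where the real work lies.
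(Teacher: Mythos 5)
Your core strategy is the same as the paper's: express $dg_\ep(\wh\xi)$ (the paper's $df_\ep$) at a point of $\Relint(\tau)$ as a convex combination of the maximal-cone slopes $m_\sigma$, $\sigma \in \Sigma(n,\tau)$, with weights given by the mollifier mass falling in each cone, show the weights stabilize as $\ep \to 0$, and read off the limit, the convexity statement, and the $\bR$-Cartier condition (your verification of the latter, via agreement of the $m_\sigma$ on the rays of $\tau$, is exactly the paper's). But the step you defer as the ``main obstacle'' is both less delicate than you fear and resolved incorrectly in your middle passage. The identity $d\varphi_\ep(\xi) = d\varphi_{\ep/\|\xi\|}(\wh\xi)$ is false ($d\varphi_\ep$ is homogeneous of degree zero, so $d\varphi_\ep(\xi) = d\varphi_\ep(\wh\xi)$, which is not $d\varphi_{\ep/\|\xi\|}(\wh\xi)$ in general), and no scaling detour is needed: the walls of $\sigma \in \Sigma(n,\tau)$ that cut out $\tau$ all pass through $\xi$, so for those walls the condition $\xi - \ep u \in \Int(\sigma)$ reads $\langle m_i \,|\, u\rangle < 0$, a condition on $u$ alone, while the remaining walls are at positive distance from $\xi$ and become irrelevant once $\ep$ is small. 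Hence the weights are literally constant for $\ep < \ep_0$, which is how the paper defines $\chi_\tau$ and immediately gets $\chi_\tau \in \Conv(\{\chi_\sigma\}_{\sigma \in \Sigma(n,\tau)})$. Your tangent-cone rescaling remark is this same fact, but you leave it as ``one shows,'' and it is the crux.

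The second loose end is the homogenization correction. Your exact-equality claims on a maximal cone ($g_\ep = \langle m_\sigma\,|\,\cdot\rangle$ and $d\varphi_\ep(\xi) = m_\sigma$ for small $\ep$) fail for an asymmetric mollifier: $g_\ep(\wh\xi) = \langle m_\sigma\,|\,\wh\xi\rangle - \int \eta_\ep(u)\langle m_\sigma\,|\,u\rangle\, du$, giving an $O(\ep)$ radial error, harmless in the limit. More importantly, for non-maximal $\tau$ you pass from $\lim_\ep dg_\ep(\wh\xi) = \sum_\sigma w_\sigma m_\sigma$ directly to $\lim_\ep d\varphi_\ep(\xi) = \sum_\sigma w_\sigma m_\sigma$ without treating the radial term $\bigl(g_\ep(\wh\xi) - \langle dg_\ep(\wh\xi)\,|\,\wh\xi\rangle\bigr)\wh\xi$ in the differential of the homogeneous extension. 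Its limit is $\bigl(\varphi(\wh\xi) - \langle \chi_\tau\,|\,\wh\xi\rangle\bigr)\wh\xi$, and one must check this vanishes; it does because every $m_\sigma$ with $\sigma \in \Sigma(n,\tau)$ agrees with $\varphi$ on $\tau$, so their convex combination $\chi_\tau$ satisfies $\langle \chi_\tau\,|\,\wh\xi\rangle = \varphi(\wh\xi)$. With these two points supplied, your argument coincides with the paper's proof.
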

\begin{proof}
	We fix throughout a bounded neighborhood $B_0 \subset N_\bR$ of $0$ containing the support of $\eta$. Given a cone $\tau \in \Sigma$ we write $\Sigma(n, \tau) \subset \Sigma(n)$ for the set of maximal cones having $\tau$ as a face. We define $$\chi_\tau := \sum_{\sigma \in \Sigma(n, \tau)} \lambda_\sigma \chi_\sigma$$ where the $\lambda_\sigma$ are defined as follows. We may choose $j, k \in \bZ$ with $1 \leq j \leq k$ and $m_1, \dotsc, m_{k} \in M$ such that 
	\begin{gather*} \sigma = \{\xi \in N_\bR\, |\, \langle m_i | \xi \rangle \geq 0\text{ for }1 \leq i \leq k\},\\\tau = \{\xi \in \sigma\, |\, \langle m_i | \xi \rangle = 0\text{ for }1 \leq i \leq j\}. \end{gather*}
	We then set $\lambda_\sigma := \int_{U_\sigma} \eta(u) du,$ where
	$$ U_{\sigma} := \{u \in B_0\, |\, \langle m_i | u \rangle < 0 \text{ for } 1 \leq i \leq j\}.$$
	
	Now let $\xi$ be any point in the relative interior of $\tau$. For $\sigma \in \Sigma(n,\tau)$ and $\ep > 0$ we set 
	\begin{equation}\label{eq:Usigep}
		U_{\sigma,\ep} = \{u \in B_0\, |\, \xi - \ep u \in \Int(\sigma)\}.
		\end{equation}
	Choose $\ep_\sigma$ small enough that $\langle m_i | \xi - \ep_\sigma u \rangle > 0$ for all $i > j$ and all $u \in B_0$. Since $\langle m_i | \xi- \ep u \rangle = -\ep \langle m_i | u \rangle$ for $1 \leq i \leq j$, we further see that $ U_{\sigma,\ep} = U_\sigma$ for $\ep < \ep_\sigma$. Taking $\ep_0$ to be the minimum of the $\ep_\sigma$, it follows that $ U_{\sigma,\ep} = U_\sigma$ for all $\ep < \ep_0$ and all $\sigma \in \Sigma(n,\tau)$. For such $\ep$ the $U_{\sigma,\ep}$ are moreover disjoint and cover a dense subset of $B_0$, hence
	$$ \sum_{ \sigma \in \Sigma(n,\tau)} \lambda_\sigma = \sum_{ \sigma \in \Sigma(n,\tau)} \int_{U_\sigma} \eta(u) du = \int_{B_0} \eta(u) du = 1.$$
	Thus $\chi_\tau$ is a convex combination of the $\chi_\sigma$ for $\sigma \in \Sigma(n,\tau)$ as claimed. That the $\chi_\tau$ collectively form an $\bR$-Cartier datum follows since each of the $\chi_\sigma$ with $\sigma \in \Sigma(n,\tau)$ take the same value on any generator of a ray of $\tau$, hence $\chi_\tau$ too takes this value. 
	
	Recalling that $f_\ep := \eta_\ep \conv \varphi$, it further follows that for $\ep < \ep_0$ we have
		\begin{align*}
		f_\ep(\xi) = \int_{N_\bR} \eta_\ep(u) \varphi(\xi - u) du 
		= \int_{B_0} \eta(u) \varphi(\xi - \ep u) du 
		= \sum_{ \sigma \in \Sigma(n,\tau)} \int_{U_\sigma} \eta(u) \varphi(\xi - \ep u) du.
	\end{align*}
	Since $\varphi$ is smooth and $d \varphi$ is identically equal to $\chi_{\sigma}$ on $\Int(\sigma)$, we then further have
	\begin{align}\label{eq:dfep}
		d f_\ep(\xi) =  \sum_{ \sigma \in \Sigma(n,\tau)} \int_{U_\sigma} \eta(u) d \varphi(\xi - \ep u) du = \sum_{ \sigma \in \Sigma(n, \tau)} \lambda_\sigma \chi_{\sigma} = \chi_\tau.
	\end{align}
		An elementary calculation using the definition $\varphi_\ep(\xi) := \| \xi \| f_\ep(\wh{\xi})$ and our identification $M_\bR\cong N_\bR$ now yields 
	\begin{equation}\label{homogeneousgradient}
		d \varphi_{\ep}(\xi) = d f_\ep(\wh{\xi}) + (f_\ep(\wh{\xi}) - \langle d f_\ep(\wh{\xi}) | \wh{\xi}\rangle ) \wh{\xi}.
	\end{equation}
	Note that $\wh{\xi}$ belongs to the relative interior of $\tau$ since $\xi$ does, hence $d f_\ep(\wh{\xi}) = \chi_\tau$ since $\xi$ was chosen arbitrarily in the above analysis. Since $\varphi$ is continuous $\lim_{\ep \to 0}f_\ep(\wh{\xi}) = \varphi(\wh{\xi})$, and it follows from (\ref{homogeneousgradient}) and the previous paragraph that
	\begin{align}\label{eq:derivlim}
		\lim_{\ep \to 0}d \varphi_{ \ep}(\xi)= \chi_\tau + \left( \varphi(\wh{\xi}) - \langle \chi_\tau | \wh{\xi}\rangle \right) \wh{\xi}. 
	\end{align}	
	If $\tau$ is a face of $\sigma$, then $\varphi(\wh{\xi}) = \langle \chi_{\sigma} | \wh{\xi} \rangle$. But then since $\sum_{ \sigma \in \Sigma(n)} \lambda_\sigma = 1$ we have $\varphi(\wh{\xi}) = \langle \chi_\tau | \wh{\xi} \rangle$, so the parenthetical term in (\ref{eq:derivlim}) vanishes and the desired identity (\ref{eq:limderiv}) follows. 
\end{proof}

The preceding result describes the pointwise limit of the $d\varphi_\ep$ as $\ep \to 0$. The fact that the~$d\varphi_\ep$ are continuous but their pointwise limit is not implies this convergence is not uniform. Instead, for a typical cone $\sigma$ there will be some $R$ such that no matter how small $\ep$ is, we can find $\xi \in \Relint(\sigma)$ with $d\varphi_\ep(\xi) \notin B_R(\chi_\sigma).$ However, the following two results give us some control over this failure of uniform convergence. 

The first result quantifies how the directions along which $d\varphi_\ep(\xi) - \chi_\sigma$ can lie for small $\ep$ are constrained by the continuity of $\varphi$. Here we recall that given a space $X$ and a family $A_\ep \subset X$ of subspaces parametrized by $\ep \in \bR_+$, one defines
$$ \limsup_{\ep \to 0} A_\ep = \{x \in X\,|\, \liminf_{\ep \to 0} d(x,A_\ep) = 0\}.$$
Equivalently, $ \limsup_{\ep \to 0} A_\ep $ is obtained by taking the closure of 
$$ \{(x, \ep) | x \in A_\ep\} \subseteq X \times \bR_+ $$
in $X \times \bR_{\geq 0}$ and then intersecting with $X \times \{0\}$. Recall also that $\tau \preceq \sigma$ indicates that $\tau$ is a facet of $\sigma$. 

\begin{Proposition}\label{prop:limsup}
In $M_\bR \times \dot{N}_\bR$ we have a containment
\begin{equation}\label{eq:limsupinc}
	\limsup_{\ep \to 0}\, \{(d\varphi_\ep(\xi),\sm\xi)\}_{\xi \in \dot{\sigma}} \subseteq (\chi_\sigma, 0) + \bigcup_{\tau \preceq \sigma} \tau^\perp \times \sm\dot{\tau}
\end{equation}
for any cone $\sigma \in \Sigma$. 
\end{Proposition}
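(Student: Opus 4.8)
The plan is to combine the pointwise limit formula \eqref{eq:derivlim} with a careful analysis of which rescaled mollifier integrals $U_{\sigma,\ep}$ can contribute near a given boundary direction. Fix a cone $\sigma$ and a sequence $\ep_k \to 0$, together with points $\xi_k \in \dot\sigma$ such that $(d\varphi_{\ep_k}(\xi_k), -\xi_k) \to (v, -\xi_0)$ for some $(v, \xi_0) \in M_\bR \times \dot N_\bR$; I must show $v \in \chi_\sigma + \tau^\perp$ and $\xi_0 \in \dot\tau$ for some facet-or-better $\tau \preceq \sigma$ (interpreting $\tau = \sigma$ as the case $\xi_0 \in \Relint(\sigma)$, where \eqref{eq:limderiv} already gives $v = \chi_\sigma$). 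By homogeneity \eqref{homogeneousgradient} I may rescale so that all $\xi_k$ lie on the unit sphere, and pass to a further subsequence so that $\hat\xi_k \to \xi_0 \in \partial\sigma \cap S^{n-1}$; let $\tau$ be the unique face of $\sigma$ with $\xi_0 \in \Relint(\tau)$.

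First I would set up the boundary analysis: write $\sigma = \{\langle m_i | \cdot\rangle \ge 0 : 1 \le i \le k\}$ and $\tau = \{\xi \in \sigma : \langle m_i|\xi\rangle = 0,\ 1 \le i \le j\}$ as in the proof of Proposition~\ref{prop:limdphiepxi}. For $\xi$ near $\xi_0$ on the sphere, the set $U_{\sigma,\ep}(\xi) = \{u \in B_0 : \xi - \ep u \in \Int(\sigma)\}$ is cut out by the conditions $\langle m_i | u\rangle < \langle m_i|\xi\rangle/\ep$. For $i > j$ the quantity $\langle m_i|\xi_0\rangle > 0$, so these constraints become vacuous for small $\ep$; for $1 \le i \le j$ the quantity $\langle m_i|\xi\rangle$ is small but nonnegative, so the constraint is $\langle m_i|u\rangle < \langle m_i|\xi\rangle/\ep$, a half-space whose boundary drifts as $\ep \to 0$. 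Thus, up to a subsequence, $\mathbbm{1}_{U_{\sigma,\ep_k}(\xi_k)}$ converges pointwise a.e. to $\mathbbm{1}_{U_\infty}$ where $U_\infty = \{u \in B_0 : \langle m_i | u\rangle < c_i,\ 1 \le i \le j\}$ for some $c_i \in [-\infty,+\infty]$ (the limits of $\langle m_i|\xi_k\rangle/\ep_k$, after passing to a subsequence). The same bump-function computation as in \eqref{eq:dfep}, now with the single cone $\sigma$ replaced by the decomposition of $B_0 \setminus U_{\sigma,\ep}$ into the regions where $\xi - \ep u$ lies in a neighboring maximal cone, lets me write $df_{\ep_k}(\hat\xi_k)$ as a convex combination $\sum_{\sigma' \in \Sigma(n,\tau)} \mu_{\sigma',k}\,\chi_{\sigma'}$ (only cones containing $\tau$ can be reached, since $\ep_k u$ is small and $\xi_0 \in \Relint(\tau)$), with $\mu_{\sigma',k} \to \mu_{\sigma'} \ge 0$, $\sum \mu_{\sigma'} = 1$. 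Hence $\lim_k df_{\ep_k}(\hat\xi_k) = \sum_{\sigma'} \mu_{\sigma'}\chi_{\sigma'} =: \chi$, which lies in $\chi_\tau + \tau^\perp$ since all the $\chi_{\sigma'}$ agree on rays of $\tau$ and $\chi_\tau$ is itself such a convex combination; in particular $\chi - \chi_\sigma \in \tau^\perp$ because $\chi_\sigma$ also agrees with $\chi_\tau$ on rays of $\tau$.

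Then I would take the limit in \eqref{homogeneousgradient}: since $f_{\ep}(\hat\xi) \to \varphi(\hat\xi)$ uniformly (as $\varphi$ is Lipschitz and the mollification radius shrinks) and $\langle df_{\ep_k}(\hat\xi_k)|\hat\xi_k\rangle \to \langle \chi | \xi_0\rangle$, I get
\[
v = \lim_k d\varphi_{\ep_k}(\xi_k) = \chi + \bigl(\varphi(\xi_0) - \langle \chi | \xi_0\rangle\bigr)\xi_0.
\]
Now $\varphi(\xi_0) = \langle \chi_{\sigma'} | \xi_0\rangle$ for every $\sigma' \in \Sigma(n,\tau)$ because $\xi_0 \in \Relint(\tau) \subseteq \sigma'$, so $\varphi(\xi_0) = \langle \chi | \xi_0\rangle$ and the parenthetical term vanishes, giving $v = \chi \in \chi_\sigma + \tau^\perp$. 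Finally, since $\xi_0 \in \Relint(\tau)$ and $\tau$ is a face of $\sigma$, either $\tau = \sigma$ (and then $v = \chi_\sigma$ by the first paragraph, which is the $\tau^\perp = \{0\}$, $\dot\tau = \dot\sigma$ term) or $\tau$ is contained in some facet of $\sigma$; writing such a facet as $\tau'$, we have $\tau^\perp \subseteq (\tau')^\perp$ is false in general, so I instead observe directly that $\xi_0 \in \dot\tau$ and $v - \chi_\sigma \in \tau^\perp$ with $\tau$ a proper face, and note that every proper face is contained in a facet while the right-hand side of \eqref{eq:limsupinc} ranges over facets only — so I must upgrade: if $\tau \subsetneq \tau' \preceq \sigma$ then $\tau^\perp \supseteq (\tau')^\perp$ and $\dot\tau \subseteq$ the boundary of $\tau'$, not its relative interior, so $(v,-\xi_0)$ need not lie in the $\tau'$-term. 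The clean fix is to prove the sharper statement with $\tau$ ranging over \emph{all} faces and then note $\bigcup_{\text{faces }\tau}(\chi_\sigma + \tau^\perp\times-\dot\tau) \subseteq \bigcup_{\tau\preceq\sigma}(\chi_\sigma+\tau^\perp\times-\dot\tau)$ fails; so instead I keep $\tau$ as-is and observe that when $\tau$ is a proper non-facet face, $\xi_0$ still lies in $-\dot\tau' $ for the facet $\tau'$ containing it only if... — the honest route is: replace $\tau$ in \eqref{eq:limsupinc} by the facet $\tau'$ of $\sigma$ minimal among those containing $\xi_0$ is not well-defined, so I will simply prove containment in $\bigcup_{\tau\preceq\sigma}$ by noting every limiting $\xi_0\in\partial\sigma\setminus\{0\}$ lies in $-\dot\tau'$ for at least one facet $\tau'\preceq\sigma$, and that for that facet $v-\chi_\sigma\in\tau^\perp\subseteq(\tau')^\perp$ since $\tau\subseteq\tau'$. \textbf{The main obstacle} is exactly this last bookkeeping step — ensuring the face $\tau$ produced by the convergence argument can be enlarged to a genuine facet of $\sigma$ without losing either membership $\xi_0 \in -\dot\tau'$ or the containment $v - \chi_\sigma \in (\tau')^\perp$ — which works precisely because $\tau^\perp \subseteq (\tau')^\perp$ when $\tau \subseteq \tau'$, so the only real content is verifying $\xi_0 \in -\dot{\tau'}$ for some facet $\tau' \preceq \sigma$, i.e. that $\xi_0 \ne 0$ lies on a facet, which is immediate since $\xi_0 \in \partial\sigma$.
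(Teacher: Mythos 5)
Your second and third paragraphs are correct and are essentially the paper's own argument: split $d\varphi_{\ep_k}(\xi_k)$ via the homogeneity formula (\ref{homogeneousgradient}), note that for large $k$ the mollification at $\wh{\xi}_k$ only sees maximal cones containing the face $\tau$ with $\lim_k \wh{\xi}_k \in \Relint(\tau)$, so that $df_{\ep_k}(\wh{\xi}_k)$ is a convex combination of the corresponding $\chi_{\sigma'}$ and hence lies in $\chi_\sigma + \tau^\perp$ by the $\bR$-Cartier property from Proposition \ref{prop:limdphiepxi}, and then kill the radial correction in the limit (the paper does this via the uniform bound $|g_\ep| \leq \ep \sum_i |\chi_{\sigma_i}|$ of (\ref{eq:uniformboundgep}), you via the identity $\varphi(\xi_0) = \langle \chi \,|\, \xi_0\rangle$; both work, and your subsequence extraction for the weights is harmless).

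The final paragraph, however, is wrong where it matters. You conclude with ``$v - \chi_\sigma \in \tau^\perp \subseteq (\tau')^\perp$ since $\tau \subseteq \tau'$'' and ``this works precisely because $\tau^\perp \subseteq (\tau')^\perp$ when $\tau \subseteq \tau'$'' --- but the inclusion goes the other way ($\tau \subseteq \tau'$ gives $(\tau')^\perp \subseteq \tau^\perp$), as you yourself correctly note earlier in the same paragraph. So the attempted upgrade from the face $\tau$ to a facet $\tau'$ fails, and it must fail: if $\preceq$ in (\ref{eq:limsupinc}) meant proper (codimension-one) faces only, the statement would be false, since for $\xi_0 \in \Relint(\sigma)$ the limit point $(\chi_\sigma, -\xi_0)$ lies in no term of the union indexed by a proper face $\tau'$, because $-\xi_0 \notin -\dot{\tau}'$. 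The resolution is that in this paper $\tau \preceq \sigma$ ranges over all faces of $\sigma$, including $\sigma$ itself; the sentence ``$\tau \preceq \sigma$ indicates that $\tau$ is a facet'' uses ``facet'' loosely, and the face reading is forced by the other uses of the notation (e.g.\ the formula $\dot{\msp}(\bC_{\Int(\sigma^\vee)}) = \bigcup_{\tau \preceq \sigma} (\tau^\perp \cap \sigma^\vee) \times (-\dot{\tau})$ in the proof of Proposition \ref{prop:psiKDonshards}) and by the paper's own proof, which takes $\tau$ to be ``the unique face of $\sigma$ whose relative interior contains $\xi$.'' With that reading, your main argument already gives exactly the required membership $(v, -\xi_0) \in (\chi_\sigma, 0) + \tau^\perp \times (-\dot{\tau})$, and the whole final paragraph should be deleted. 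One minor point: the opening remark that in the case $\tau = \sigma$ formula (\ref{eq:limderiv}) ``already gives $v = \chi_\sigma$'' is not justified, since (\ref{eq:limderiv}) is a pointwise limit at fixed $\xi$ while your $\xi_k$ vary with $k$; but this is immaterial, as your general argument yields $v \in \chi_\sigma + \sigma^\perp$, which is all that the $\tau = \sigma$ term requires.
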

\begin{proof}
Recall from (\ref{homogeneousgradient}) that $d \varphi_\ep(\xi) = df_\ep(\wh{\xi}) + g_\ep(\xi) \wh{\xi}$ for all $\xi \in \dot{N}_\bR$, where $f_\ep := \eta_\ep \conv \varphi$ as before, and where we now define $g_\ep: \dot{N}_\bR \to \bR$ by $g_\ep(\xi) = f_\ep(\wh{\xi}) - \langle d f_\ep(\wh{\xi}) | \wh{\xi}\rangle$. We first claim the~$g_\ep$ converge uniformly to zero as $\ep \to 0$. To see this, let $\sigma_1, \dotsc, \sigma_k$ be the maximal cones of $\Sigma$ and set
$$U_{i, \ep, \xi} = \{u \in B_1(0)\, |\, \xi - \ep u \in \Int(\sigma_i)\}$$
for any $\ep > 0$, $\xi \in \dot{N}_\bR$, and  $1 \leq i \leq k$. We then have 
\begin{equation}\label{eq:uniformboundgep}
\begin{aligned}
	\left| g_\ep(\xi) \right| &\leq \int_{B_1(0)} \eta(u) \left| \varphi(\wh{\xi} - \ep u) - \langle d \varphi(\wh{\xi} - \ep u) | \wh{\xi}  \rangle \right| du \\
	&= \sum_{i = 1}^k \int_{U_{i, \ep, \wh{\xi}}} \eta(u) \left| \langle \chi_{\sigma_i} | \wh{\xi} - \ep u \rangle - \langle \chi_{\sigma_i} | \wh{\xi}  \rangle \right| du  \\
	&= \sum_{i = 1}^k \int_{U_{i, \ep, \wh{\xi}}} \eta(u)  \left|\langle \chi_{\sigma_i} | \shortminus \ep u \rangle \right|  du  \\
	&= \ep \sum_{i = 1}^k \int_{U_{i, \ep, \wh{\xi}}} \eta(u)  \left| \langle \chi_{\sigma_i} | \shortminus u \rangle \right|  du  \\
	&\leq \ep \sum_{i = 1}^k  \left|\chi_{\sigma_i} \right|.
\end{aligned} 
\end{equation}
Since $\sum_{i = 1}^k \left|\chi_{\sigma_i} \right|$ is independent of $\xi$, it follows that $\lim_{\ep \to 0} \sup_{\xi \in \dot{N}_\bR} |g_\ep(\xi)| = 0$. 

Now suppose $\{(d\varphi_{\ep_n}(\xi_n),\sm\xi_n)\}$ is a convergent sequence in $M_\bR\times \dot{N}_\bR$ with $\ep_n \to 0$ and $\xi_n \in \dot{\sigma}$ for all $n$. To prove the proposition, it suffices to show that $\lim_{n \to \infty} (d\varphi_{\ep_n}(\xi_n),\xi_n)$ belongs to the right-hand side of (\ref{eq:limsupinc}). Since $\dot{\sigma} \subset \dot{N}_\bR$ is closed, it contains $\xi := \lim_{n \to \infty} \xi_n$. We write $\tau$ for the unique face of $\sigma$ whose relative interior contains $\xi$. It then suffices to show $\lim_{n \to \infty} d\varphi_{\ep_n}(\xi_n) \in \chi_\sigma + \tau^\perp$. 

By the first paragraph the $g_{\ep_n}$ converge uniformly to zero as $n \to \infty$, hence we have $\lim_{n \to \infty} g_{\ep_n}(\xi_n) = 0$. It follows that
$$ \lim_{n \to \infty} d\varphi_{\ep_n}(\xi_n) 
= \lim_{n \to \infty} df_{\ep_n}(\wh{\xi}_n) + g_{\ep_n}(\xi_n) \wh{\xi}_n
= \lim_{n \to \infty} df_{\ep_n}(\wh{\xi}_n),$$
so it further suffices to show $\lim_{n \to \infty} df_{\ep_n}(\wh{\xi}_n) \in \chi_\sigma + \tau^\perp$.

To see this, choose $\ep > 0$ so that $B_{\ep}(\wh{\xi})$ only meets maximal cones having $\tau$ as a face. We may further choose $N$ so that $d(\wh{\xi}, \wh{\xi}_n) < \frac{\ep}{2}$ and $\ep_n < \frac{\ep}{2}$ for $n > N$. It follows that $B_{\ep_n}(\wh{\xi}_n)$ only meets maximal cones having $\tau$ as a face for $n > N$. We set $a_{i,n} = \int_{U_{i, \ep_n, \wh{\xi}_n}} \eta(u) du$, noting that for all $n$ we have $\sum_{i=1}^k a_{i,n} = 1$, and that for $n > N$ we have $a_{i,n} = 0$ unless $\tau$ is a face of $\sigma_i$. We then have as in (\ref{eq:dfep}) that
\begin{equation}\label{eq:somedfepn} d f_{\ep_n}(\wh{\xi}_n) =  \sum_{ i = 1}^k \int_{U_{i, \ep_n, \wh{\xi}_n}} \eta(u) d \varphi(\wh{\xi}_n - {\ep_n} u) du = \sum_{ i = 1}^k a_{i,n} \chi_{\sigma_i} = \chi_\sigma + \sum_{ i = 1}^k a_{i,n} (\chi_{\sigma_i} - \chi_\sigma).  
	\end{equation}
Recall that $\chi_\sigma$ and $\chi_{\sigma_1}, \dotsc,\chi_{\sigma_k}$ are part of an $\bR$-Cartier datum by Proposition \ref{prop:limdphiepxi}. If $\tau$ is a face of~$\sigma_i$, it follows that $\chi_{\sigma_i} - \chi_\sigma \in \tau^\perp$. But then by (\ref{eq:somedfepn}) we have $df_{\ep_n}(\wh{\xi}_n) \in \chi_\sigma + \tau^\perp$ for $n > N$, hence  $\lim_{n \to \infty} df_{\ep_n}(\wh{\xi}_n) \in \chi_\sigma + \tau^\perp$. 
\end{proof}

Next, we show that as $\ep \to 0$ the values of $d \varphi_\ep$ are uniformly bounded near the convex hull of the $\chi_\sigma$. Again we write $\Conv(X)$ for the convex hull of a subset $X \subset M_\bR$, and $B_R(X)$ for the set of points whose distance to $X$ is at most $R$. Note also that the convexity claim of Proposition \ref{prop:limdphiepxi} implies that $\Conv(\{\chi_\sigma\}_{\sigma \in \Sigma})$ could equivalently be written as $\Conv(\{\chi_\sigma\}_{\sigma \in \Sigma(n)})$. 

\begin{Proposition}\label{prop:dvarphiepbounded}
If $R = \sum_{\sigma \in \Sigma(n)} \left|\chi_{\sigma} \right|$, then for any $\ep > 0$ we have 
$$ d \varphi_\ep(\dot{N}_\bR) \subseteq \overline{B_{\ep R}(\Conv(\{\chi_\sigma\}_{\sigma \in \Sigma}))}. $$
\end{Proposition}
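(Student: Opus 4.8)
The key is to revisit the decomposition $d\varphi_\ep(\xi) = df_\ep(\wh\xi) + g_\ep(\xi)\wh\xi$ established in~\eqref{homogeneousgradient}, together with the estimates already obtained in the proof of Proposition~\ref{prop:limsup}. First I would recall from~\eqref{eq:dfep}-type reasoning that for any $\xi \in \dot N_\bR$ we can write, writing $\sigma_1,\dots,\sigma_k$ for the maximal cones of $\Sigma$ and $a_{i,\ep,\xi} = \int_{U_{i,\ep,\wh\xi}}\eta(u)\,du$ with $U_{i,\ep,\xi} = \{u \in B_1(0) : \xi - \ep u \in \Int(\sigma_i)\}$,
\[
df_\ep(\wh\xi) = \sum_{i=1}^k \int_{U_{i,\ep,\wh\xi}} \eta(u)\, d\varphi(\wh\xi - \ep u)\, du = \sum_{i=1}^k a_{i,\ep,\xi}\, \chi_{\sigma_i},
\]
where $\sum_i a_{i,\ep,\xi} = 1$ and each $a_{i,\ep,\xi} \geq 0$ (the $U_{i,\ep,\wh\xi}$ being disjoint and covering a dense subset of $B_1(0)$, since $\Sigma$ is complete). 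Hence $df_\ep(\wh\xi)$ is a convex combination of the $\chi_{\sigma_i}$, so it always lies in $\Conv(\{\chi_\sigma\}_{\sigma\in\Sigma(n)}) = \Conv(\{\chi_\sigma\}_{\sigma\in\Sigma})$.

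\textbf{Bounding the correction term.} Next I would invoke the uniform bound on $g_\ep$ already proved inside Proposition~\ref{prop:limsup}: the computation~\eqref{eq:uniformboundgep} shows $|g_\ep(\xi)| \leq \ep\sum_{i=1}^k |\chi_{\sigma_i}| = \ep R$ for every $\xi \in \dot N_\bR$. Since $\|\wh\xi\| = 1$, the vector $g_\ep(\xi)\wh\xi$ has norm at most $\ep R$. Therefore
\[
d\varphi_\ep(\xi) = df_\ep(\wh\xi) + g_\ep(\xi)\wh\xi \in \Conv(\{\chi_\sigma\}_{\sigma\in\Sigma}) + \overline{B_{\ep R}(0)} = \overline{B_{\ep R}(\Conv(\{\chi_\sigma\}_{\sigma\in\Sigma}))},
\]
which is exactly the claimed containment. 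Taking the union over all $\xi \in \dot N_\bR$ gives $d\varphi_\ep(\dot N_\bR) \subseteq \overline{B_{\ep R}(\Conv(\{\chi_\sigma\}_{\sigma\in\Sigma}))}$.

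\textbf{Where the work sits.} There is essentially no new obstacle here: both ingredients — that $df_\ep(\wh\xi)$ is a convex combination of the $\chi_{\sigma_i}$, and that $|g_\ep| \leq \ep R$ — are already present (the first implicitly in the proof of Proposition~\ref{prop:limdphiepxi}, the second explicitly as~\eqref{eq:uniformboundgep}). The only mild care needed is to confirm that completeness of $\Sigma$ guarantees the $U_{i,\ep,\wh\xi}$ cover a \emph{dense} subset of $B_1(0)$ for every $\wh\xi$ (so that $\sum_i a_{i,\ep,\xi} = 1$ genuinely holds, with no mass lost on the boundaries between cones, which have measure zero), and to note that the constant $R = \sum_{\sigma\in\Sigma(n)}|\chi_\sigma|$ in the statement matches $\sum_{i=1}^k |\chi_{\sigma_i}|$ from~\eqref{eq:uniformboundgep}. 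Assembling these observations into a two- or three-line argument is the whole proof.
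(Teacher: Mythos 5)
Your proof is correct and is essentially the paper's own argument: the same decomposition $d\varphi_\ep(\xi) = df_\ep(\wh{\xi}) + g_\ep(\xi)\wh{\xi}$ from (\ref{homogeneousgradient}), the observation that $df_\ep(\wh{\xi})$ is a convex combination of the $\chi_{\sigma_i}$, and the bound $|g_\ep(\xi)| \leq \ep R$ from (\ref{eq:uniformboundgep}). If anything, you are slightly more careful than the paper in spelling out that the convex-combination identity holds for \emph{every} $\ep > 0$ (not just small $\ep$ as in the derivation of (\ref{eq:dfep})), which is exactly the right justification.
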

\begin{proof}
Most of the necessary work has already been done. Recall again from (\ref{homogeneousgradient}) that for all $\xi \in \dot{N}_\bR$ and $\ep > 0$ we have $d \varphi_\ep(\xi) = df_\ep(\wh{\xi}) + g_\ep(\xi) \wh{\xi}$, where $f_\ep := \eta_\ep \conv \varphi$ and where $g_\ep(\xi) = f_\ep(\wh{\xi}) - \langle d f_\ep(\wh{\xi}) | \wh{\xi}\rangle$. It further follows from (\ref{eq:dfep}) that $df_\ep(\wh{\xi}) \in \Conv(\{\chi_\sigma\}_{\sigma \in \Sigma})$. On the other hand, we have shown in (\ref{eq:uniformboundgep}) that $\left| g_\ep(\xi) \right| \leq \ep R$. The claim now follows since by definition we have $| \wh{\xi} | = 1$. 
\end{proof}

Note that for any face $\tau \preceq \sigma$, we have $\tau^\perp \subset \rho^\perp$ for any ray $\rho \in \tau(1) \subseteq \sigma(1)$. In particular, if we set
$$ \sigma(1)^\perp := \bigcup_{\rho \in \sigma(1)} \rho^\perp, $$
then Proposition \ref{prop:limsup} implies that
$$ \limsup_{\ep \to 0} d\varphi_\ep(\dot{\sigma}) \subseteq \chi_\sigma + \sigma(1)^\perp.$$ 
In what follows we will use this in conjunction with the following result, which lets us control sheaves whose singular support lies above a translate of $\sigma(1)^\perp$ and satisfies a directionality condition. In the statement we write $j$ for the inclusion $\Int(\sigma^\vee) \into M_\bR$, noting that $\Int(\sigma^\vee)$ is a connected component of the complement of $\sigma(1)^\perp$. 

\begin{Proposition}\label{prop:sigma1perpssprop}
Suppose that $\cF \in \Sh^b(M_\bR)$ satisfies
$$\dot{\msp}(\cF) \subseteq \sigma(1)^\perp \times (\dot{N}_\bR \smallsetminus \dot{\sigma}), $$
and that $\cF_x \cong 0$ for all $x \notin \sigma(1)^\perp \cup \Int(\sigma^\vee)$. Then the natural map $j_! j^! \cF \to \cF$ is an isomorphism. 
\end{Proposition}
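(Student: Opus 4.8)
The plan is to show the cone of $j_! j^! \cF \to \cF$ vanishes by checking its stalks at every point of $M_\bR$, exploiting that $\Int(\sigma^\vee)$ is an open subset whose complement is contained in $\sigma(1)^\perp \cup (M_\bR \smallsetminus \Supp \cF)$. Write $i$ for the inclusion of the closed complement $M_\bR \smallsetminus \Int(\sigma^\vee)$, so that there is an exact triangle $j_! j^! \cF \to \cF \to i_* i^* \cF$. It thus suffices to show $i_* i^* \cF \cong 0$, i.e.\ that $\cF_x \cong 0$ for all $x \notin \Int(\sigma^\vee)$. By the second hypothesis this is automatic for $x \notin \sigma(1)^\perp \cup \Int(\sigma^\vee)$, so the real content is to show $\cF_x \cong 0$ for $x \in \sigma(1)^\perp \smallsetminus \Int(\sigma^\vee)$ as well; a priori these are exactly the points where the vanishing is not given to us for free.

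First I would set up a propagation argument using the singular support bound. Fix $x_0 \in \sigma(1)^\perp$; I want to connect $x_0$ to a point where $\cF$ is already known to vanish, along a path that avoids the front projection of $\dot{\msp}(\cF)$, so that the stalk is constant along it (by the standard fact that $\cF$ is locally constant in the complement of the projection of its singular support, e.g.\ \cite[Prop.\ 5.4.5 / Cor.\ 5.4.19]{KS94}). The hypothesis $\dot{\msp}(\cF) \subseteq \sigma(1)^\perp \times (\dot N_\bR \smallsetminus \dot\sigma)$ tells us two things: the base of the singular support lies in $\sigma(1)^\perp$, and over any point the covector directions avoid $\dot\sigma$, i.e.\ lie in the open half-space-type region $\dot N_\bR \smallsetminus \dot\sigma$. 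The idea is that moving in a direction $\xi_0$ with $\xi_0 \in \Int(\sigma)$ — more precisely, translating $x \mapsto x + t\xi_0$ — is "transverse" to all covectors in $\dot N_\bR \smallsetminus \dot\sigma$ in the appropriate sign, so such a ray meets the front projection in a controlled way; and for $t$ large enough $x_0 + t\xi_0$ lands in $\Int(\sigma^\vee)$... but that's not quite where we know vanishing. Rather, I would argue the opposite way: pick $\xi_0 \in \Int(\sigma)$ and consider the ray $x_0 - t\xi_0$ for $t \geq 0$; pairing covectors $\eta \in \dot N_\bR \smallsetminus \dot\sigma$ against $-\xi_0$, and using that $\dot\sigma$ is the dual notion, one checks the relevant directional derivative condition of \cite[Prop.\ 5.3.1]{KS94} forces $(\cF)_{x_0 - t\xi_0}$ to inject/surject compatibly so that the stalk at $x_0$ is a retract of the stalk far out along the ray, where the point has left $\sigma(1)^\perp$ (since moving into $-\Int(\sigma)$ eventually exits every hyperplane $\rho^\perp$ it isn't already parallel to, and the ones it is parallel to it stays in — here one needs the directionality to handle the parallel hyperplanes, which is exactly what the $\dot N_\bR \smallsetminus \dot\sigma$ condition buys). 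Once outside $\sigma(1)^\perp \cup \Int(\sigma^\vee)$, the stalk is $0$ by the second hypothesis, and chasing back along the ray gives $\cF_{x_0} \cong 0$.

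More carefully, the clean way to package this is via the microlocal cut-off / non-characteristic deformation lemma: for a closed half-space $\{ \langle \eta_0, \cdot \rangle \leq c\}$ whose conormal direction $\eta_0$ is non-characteristic for $\cF$ on the relevant region, pushing the boundary does not change sections. Applied iteratively over the finitely many hyperplanes $\rho^\perp$, $\rho \in \sigma(1)$, one shows $i^* \cF$ is determined by its restriction to the part of $M_\bR \smallsetminus \Int(\sigma^\vee)$ lying outside all of them — which is empty of support. Concretely: $M_\bR \smallsetminus \Int(\sigma^\vee) \subseteq \sigma(1)^\perp$ fails in general, so instead stratify $M_\bR \smallsetminus \Int(\sigma^\vee)$ by which hyperplanes $\rho^\perp$ a point lies on, and induct downward on the number of such hyperplanes, at each stage using a vector $\xi_0$ in the relative interior of the corresponding face of $\sigma$ to propagate vanishing from the lower stratum (where the stalk is already known to vanish) up to the given stratum. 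The directionality hypothesis $\dot{\msp}(\cF) \cap (\sigma(1)^\perp \times \dot\sigma) = \emptyset$ is exactly what makes each such $\xi_0$ a legitimate propagation direction, because along the face in question the conormals that could obstruct lie in $\dot N_\bR \smallsetminus \dot\sigma$ and the pairing with $\pm\xi_0$ has a fixed sign on the cone generated by them.

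The main obstacle I expect is the bookkeeping in this stratified propagation: one must (i) correctly identify, for each point $x$ of $\sigma(1)^\perp$, a vector $\xi_0$ such that the segment from $x$ to a point of strictly smaller "hyperplane-stratum" depth stays within the region where the singular support estimate applies and meets the front projection non-characteristically, and (ii) verify the sign conventions relating $\dot\sigma$, its dual, and the pairing $\langle d\varphi, \cdot\rangle$ so that $\dot N_\bR \smallsetminus \dot\sigma$ really is the set of "allowed" conormals and $\pm\xi_0 \in \sigma$ is genuinely transverse to it. This is where an error is most likely to creep in, and where I would be most careful to match the conventions of \cite[Ch.\ 5]{KS94}. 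Everything else — the triangle $j_! j^! \cF \to \cF \to i_* i^* \cF$, reduction to vanishing of stalks, and the conclusion from the second hypothesis on the outermost stratum — is formal.
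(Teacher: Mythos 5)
Your overall strategy is the paper's: reduce via the triangle $j_!j^!\cF \to \cF \to i_*i^*\cF$ to showing $\cF_x \cong 0$ for every $x \notin \Int(\sigma^\vee)$, then induct on the number $r(x) = \#\{\rho \in \sigma(1) \mid x \in \rho^\perp\}$ of hyperplanes through $x$, with the base case $r(x)=0$ given by the stalk hypothesis and the inductive step given by microlocal propagation across one hyperplane, licensed by the fact that covector directions in $\dot{\sigma}$ are excluded from $\dot{\msp}(\cF)$. So the skeleton matches the paper's proof.

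However, the justification you offer for the propagation step is not correct as stated, and this is the crux. You argue that $\xi_0$ is a legitimate propagation direction "because the conormals that could obstruct lie in $\dot{N}_\bR \smallsetminus \dot{\sigma}$ and the pairing with $\pm\xi_0$ has a fixed sign on the cone generated by them"; but $\dot{N}_\bR \smallsetminus \dot{\sigma}$ is the complement of a strongly convex cone, it spans all of $N_\bR$, and no linear functional has a fixed sign on it. The correct mechanism does not involve signs of pairings against the allowed covectors at all: for $x \in \rho^\perp$ take $\phi = \langle u_\rho \,|\, \cdot\,\rangle$; since $u_\rho \in \dot{\sigma}$, the hypothesis gives $(x, u_\rho) \notin \dot{\msp}(\cF)$, so by the definition of singular support $(R\Gamma_{\{\phi \geq 0\}}\cF)_x \cong 0$, hence $\cF_x \cong (j_{\rho*}j_\rho^*\cF)_x$ with $j_\rho$ the inclusion of $\{\phi < 0\}$. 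Points of $\{\phi<0\}$ near $x$ lie outside $\Int(\sigma^\vee)$ (because $\Int(\sigma^\vee) \subseteq \{\phi > 0\}$) and have $r(y) < r(x)$, so the inductive hypothesis kills the stalk; no segment or ray is needed. This local, one-covector-at-a-time version also sidesteps two secondary problems in your sketch: (i) the "push the boundary"/"stalk far out along the ray" variants via the non-characteristic deformation lemma require compactness of $\supp \cF$ intersected with the swept slabs, which is not available here since $\supp \cF$ may be unbounded inside $\sigma(1)^\perp \cup \overline{\sigma^\vee}$; and (ii) using a movement direction $\xi_0 \in \Relint(\sigma)$ under the identification $N_\bR \cong M_\bR$ is delicate, since a vector in the relative interior of a cone need not pair positively with all of that cone's ray generators, whereas using the covector $u_\rho$ directly requires no identification and no such sign analysis.
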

\begin{proof}
The claim is equivalent to the claim that $\cF_x \cong 0$ for all $x \notin \Int(\sigma^\vee)$. We prove this by induction on 
$$ r(x) = \# \{ \rho \in \sigma(1) \,|\, x \in \rho^\perp \}.$$
If $r(x)=0$ then $x \notin \Int(\sigma^\vee)\cup \sigma(1)^\perp$, hence by hypothesis $\cF_x \cong 0$. Assume then that $r(x) > 0$ and that $\cF_y \cong 0$ for all $y \notin \Int(\sigma^\vee)$ with $r(y) < r(x)$. 

Choose any $\rho \in \sigma(1)$ such that $x \in \rho^\perp$, and let $\xi \in \rho$ be a generator. We have inclusions
$$ i_\rho: Z_\rho^+ = \{ y \,|\, \langle \xi \,|\, y \rangle \geq 0 \} \into M_\bR, \quad j_\rho: U_\rho^- = \{ y \,|\, \langle \xi \,|\, y \rangle < 0 \} \into M_\bR. $$
Since the differential of $\langle \xi \,|\, - \rangle$ at $x$ is $(x, \xi)$, we must have $(i_{\rho!}i_\rho^! \cF)_x \cong 0$. Otherwise $(x, \xi) \in \dot{\msp}(\cF)$, which contradicts our hypotheses since $\xi \in \sigma$. Thus the natural map $\cF_x \to (j_{\rho *} j_\rho^* \cF)_x$ is an isomorphism. But $U_\rho^-$ lies in the complement of $\Int(\sigma^\vee)$, and we can choose a neighborhood $U_x$ of $x$ such that $r(y) < r(x)$ for all $y \in U_x \cap U_\rho^-$. By our inductive assumption $j_\rho^* \cF$ must then vanish on $U_x \cap U_\rho^-$. It follows that $(j_{\rho *} j_\rho^* \cF)_x$ is zero, hence so is~$\cF_x$. 
\end{proof}

Recall that the twisted polytope sheaf $\cP(D)$ admits a Cech resolution whose summands are of the form $\coeff_{\,\Int(\chi_\sigma + \sigma^\vee)}$ for $\sigma \in \Sigma$. To relate the GKS kernel $\wt{K}_{D,\smint}$ to $\cP(D)$, we first relate it to these summands. 

\begin{Proposition}\label{prop:psiKDonshards}
	There is an isomorphism
	$$ \psi(\wt{K}_{D, \smint} \circ_\smint \coeff_{\,\Int(\sigma^\vee)}) \cong \coeff_{\,\Int(\chi_\sigma + \sigma^\vee)}$$
	for any $\sigma \in \Sigma$. 
\end{Proposition}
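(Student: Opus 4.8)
The strategy is to identify the sheaf $\psi(\wt{K}_{D,\smint} \circ_\smint \coeff_{\,\Int(\sigma^\vee)})$ by first pinning down its singular support and then invoking Proposition~\ref{prop:sigma1perpssprop} to conclude it is the constant sheaf on a cone, namely the translate $\Int(\chi_\sigma + \sigma^\vee)$. I would begin by recalling that for each finite $\ep \in \smint$ the GKS formalism (the displayed formula for $\dot{\msp}^\infty$ of a GKS kernel action, and \cite[Prop. 3.12]{GKS12}) gives
$$ \dot{\msp}(\wt{K}_{D,\ep} \circ \coeff_{\,\Int(\sigma^\vee)}) = \Phi_{\ep,1}(\dot{\msp}(\coeff_{\,\Int(\sigma^\vee)})). $$
Since $\coeff_{\,\Int(\sigma^\vee)}$ is the constant sheaf on an open convex cone, its singular support is $\{(x,\xi)\,:\,x \in \partial(\sigma^\vee),\ \xi \in \sigma,\ \langle \xi, x\rangle = 0\}$ together with the zero section over $\Int(\sigma^\vee)$; conormal directions to the boundary of $\sigma^\vee$ are exactly the rays of $\sigma$. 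By Proposition~\ref{prop:GKSongroupsflowpart} (applied with $G = M_\bR$ abelian, so bi-invariance is automatic), the flow is $(x,\xi)\mapsto (x + d\varphi_\ep(\xi),\xi)$ — translation in the base by $d\varphi_\ep(\xi)$. Thus the singular support of the $\ep$-level sheaf lives over the translated boundary, with covector directions still lying in $\sigma$.

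The heart of the argument is to take the limit $\ep \to 0$. Assembling the $\wt{K}_{D,\ep}\circ\coeff_{\,\Int(\sigma^\vee)}$ into the total sheaf over $\smint$ and applying nearby cycles, I would use the general principle that $\dot{\msp}(\psi(-))$ is controlled by $\limsup_{\ep\to 0}$ of the singular supports of the fibers (more precisely, by the singular support of $\ol{j}_* $ of the total sheaf restricted to the zero fiber). Here is where Propositions~\ref{prop:limsup} and~\ref{prop:dvarphiepbounded} do the real work: Proposition~\ref{prop:dvarphiepbounded} gives that the base-translations $d\varphi_\ep(\xi)$ stay within $\ep R$ of the compact set $\Conv(\{\chi_\sigma\})$, so they remain bounded and the nearby cycles limit exists with compact-in-the-base control; Proposition~\ref{prop:limsup} pins down the conormal geometry, giving
$$ \limsup_{\ep\to 0}\{(d\varphi_\ep(\xi),\sm\xi)\}_{\xi\in\dot\sigma} \subseteq (\chi_\sigma,0) + \bigcup_{\tau\preceq\sigma}\tau^\perp\times\sm\dot\tau \subseteq (\chi_\sigma,0) + \sigma(1)^\perp\times(\dot N_\bR\smallsetminus\dot\sigma)^{-}, $$
using the observation recorded just before Proposition~\ref{prop:sigma1perpssprop} that $\tau^\perp\subseteq\rho^\perp$ for $\rho\in\tau(1)$. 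Translating back by $\chi_\sigma$, this says $\dot{\msp}(\psi(\wt K_{D,\smint}\circ_\smint\coeff_{\,\Int(\sigma^\vee)}) \text{ shifted by } -\chi_\sigma) \subseteq \sigma(1)^\perp\times(\dot N_\bR\smallsetminus\dot\sigma)$, exactly the hypothesis of Proposition~\ref{prop:sigma1perpssprop}.

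To close, I would (i) check the stalk vanishing hypothesis of Proposition~\ref{prop:sigma1perpssprop}: the stalk of the limit sheaf at a point $x$ is controlled by whether a path from deep inside $\chi_\sigma + \Int(\sigma^\vee)$ to $x$ crosses the front projection, and the boundedness from Proposition~\ref{prop:dvarphiepbounded} together with the fact that the support of the $\ep$-level sheaf is contained in $\overline{d\varphi_\ep(\sigma) + \Int(\sigma^\vee)}$ forces the limiting support into $\overline{\chi_\sigma + \Int(\sigma^\vee)}$, whence stalks vanish outside $\chi_\sigma + \sigma(1)^\perp \cup \chi_\sigma + \Int(\sigma^\vee)$; (ii) compute one interior stalk, say at a point deep in $\chi_\sigma + \Int(\sigma^\vee)$ — here the GKS normalization $\ol i_0^* K \cong \bC_{\Delta}$ plus a connectedness argument shows the stalk is $\coeff$ concentrated in degree $0$ (the action of $\wt K_{D,\ep}$ on a constant sheaf is constant along the flow, and the flow just translates); then Proposition~\ref{prop:sigma1perpssprop} gives $j_!j^!$ of the (shifted) sheaf is the whole sheaf, i.e. it is $\coeff_{\,\Int(\chi_\sigma+\sigma^\vee)}$. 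I expect the main obstacle to be step (i)–(ii): carefully justifying that nearby cycles commutes with the singular-support estimate (i.e. that $\dot{\msp}(\psi(-))$ really is bounded by the $\limsup$ of the fiberwise singular supports in this non-proper situation) and nailing down the interior stalk with the correct cohomological degree and no higher cohomology. The boundedness furnished by Proposition~\ref{prop:dvarphiepbounded} is precisely what makes the nearby-cycles limit well-behaved, so the argument should go through, but the bookkeeping comparing front projections of the $d\varphi_\ep$ as $\ep\to 0$ is the delicate part.
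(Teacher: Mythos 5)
Your plan is essentially the paper's proof: bound $\dot{\msp}(\wt{K}_{D,\ep,t}\circ\coeff_{\,\Int(\sigma^\vee)})$ via the translation flow and Proposition \ref{prop:limsup}, pass the $\limsup$ through nearby cycles, verify stalk vanishing off $\chi_\sigma+\sigma(1)^\perp$, compute one interior stalk, and conclude with Proposition \ref{prop:sigma1perpssprop}. The two steps you flag as delicate are handled in the paper by citing \cite[Lem. 3.16]{NS20} for the singular support estimate on nearby cycles, and by applying \cite[Prop. 3.2(i)]{GKS12} on the complement of the $\delta$-thickened region swept out by the front projections for $t\in[0,1]$, which yields both the stalk vanishing and the interior stalk $\cong\coeff$ in degree zero without needing your support-containment claim (Proposition \ref{prop:dvarphiepbounded} is not needed here; it enters only later, in the nonequivariant comparison).
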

\begin{proof}
Recall that $\wt{K}_{D,\smint}$ is obtained from a larger family of kernels $\wt{K}_{D,\smint,\bR} \in \Sh(M_\bR \times M_\bR \times \smint \times \bR)$ by restricting to $1\in\bR$. Let us similarly write $\wt{K}_{D,\ep,t} \in \Sh(M_\bR \times M_\bR)$ for the result of restricting this family to $(\ep,t)\in \smint \times \bR$. We begin by establishing some bounds on $\dot{\msp}(\wt{K}_{D,\ep,t} \circ \coeff_{\,\Int(\sigma^\vee)})$. 

Note that the right-hand side of (\ref{eq:limsupinc}) is contained in $(\chi_\sigma,0) + \sigma(1)^\perp \times \sm\dot{\sigma}$, similarly if we replace $\sigma$ by any of its faces. As there are finitely many faces, it follows from Proposition \ref{prop:limsup} that we may choose $\delta,\ep' > 0$ so that for $\ep < \ep'$ we have 
\begin{equation}\label{eq:dvphicontaineq} (d\varphi_\ep(\xi),\sm\xi) \in B_\delta((\chi_\tau,0) + \tau(1)^\perp \times \sm\dot{\tau}) \end{equation}
for all faces $\tau$ and all $\xi \in \dot{\tau}$. We then claim that
\begin{equation}\label{eq:ssshardcontainment}
\begin{aligned}
\dot{\msp}(\wt{K}_{D,\ep,t} \circ \coeff_{\,\Int(\sigma^\vee)}) & \subseteq \bigcup_{\tau \preceq \sigma} \bigcup_{\xi \in \dot{\tau}} \,(t d\varphi_{\ep}(\xi),\sm\xi) + (\tau^\perp \cap \sigma^\vee) \times \{0\} \\
& \subseteq \bigcup_{\tau \preceq \sigma} B_{t \delta}((t \chi_\tau,0) + \tau(1)^\perp \times \sm\dot{\tau}) + (\tau^\perp \cap \sigma^\vee) \times \{0\} \\
& \subseteq \bigcup_{\tau \preceq \sigma} B_{t \delta}((t \chi_\tau,0) + \tau(1)^\perp \times \sm\dot{\tau}) \\
& \subseteq B_{t \delta}((t \chi_\sigma,0) + \sigma(1)^\perp \times \sm\dot{\sigma}). 
\end{aligned}
\end{equation}
The first containment follows from the fact that $\dot{\msp}(\coeff_{\,\Int( \sigma^\vee)}) = \bigcup_{\tau \preceq \sigma} (\tau^\perp \cap \sigma^\vee) \times (\sm \dot{\tau})$, the second from (\ref{eq:dvphicontaineq}), the third from the fact that $\tau(1)^\perp$ is closed under addition by $\tau^\perp$, and the last from the fact that $\chi_\tau + \tau(1)^\perp \subset \chi_\sigma + \sigma(1)^\perp$ for all $\tau \preceq \sigma$. 

Since $\delta$ can be arbitrarily small in the previous paragraph, it follows that 
$$ \limsup_{\ep \to 0} \dot{\msp}(\wt{K}_{D,\ep,1} \circ \coeff_{\,\Int(\sigma^\vee)}) \subseteq (\chi_\sigma,0) + \sigma(1)^\perp \times \sm\dot{\sigma}. $$ 
It then further follows from the singular support bound of \cite[Lem. 3.16]{NS20} that
\begin{equation}\label{eq:vcyclesssbound} \dot{\msp}(\psi (\wt{K}_{D, \smint} \circ_\smint \coeff_{\, \Int(\sigma^\vee)})) \subseteq (\chi_\sigma,0) + \sigma(1)^\perp \times \sm \dot{\sigma}. \end{equation}
Aiming to apply Proposition \ref{prop:sigma1perpssprop}, we now compute the stalks of $\psi (\wt{K}_{D, \smint} \circ_\smint \coeff_{\,\Int(\sigma^\vee)})$ in the complement of $\chi_\sigma + \sigma(1)^\perp$. 

To do this, let $V$ denote the complement of the closure of $\bigcup_{0 \leq t \leq 1} B_\delta(t\chi_\sigma + \sigma(1)^\perp)$. Each component of $V$ is contained in a unique component of the complement of $\sigma(1)^\perp$, and also in a unique component of the complement of $\chi_\sigma + \sigma(1)^\perp$. For any $x \in V$, it follows from (\ref{eq:ssshardcontainment}) and \cite[Prop. 3.2(i)]{GKS12} that $(\coeff_{\,\Int(\sigma^\vee)})_x \cong (\wt{K}_{D,\ep,1} \circ \coeff_{\,\Int(\sigma^\vee)})_x$. If $x \in V$ is not contained in $\Int(\sigma^\vee)$, it immediately follows that $(\wt{K}_{D,\ep,1} \circ \coeff_{\,\Int(\sigma^\vee)})_x \cong 0.$ This being true in a neighborhood of $x$ and for all $\ep < \ep'$, it follows that $\psi(\wt{K}_{D, \smint} \circ_\smint \coeff_{\,\Int(\sigma^\vee)})_x \cong 0$. Since each component of the complement of $\chi_\sigma + \sigma(1)^\perp$ other than $\chi_\sigma + \Int(\sigma^\vee)$ contains a point of this form, it follows from (\ref{eq:vcyclesssbound}) that $\psi(\wt{K}_{D, \smint} \circ_\smint \coeff_{\,\Int(\sigma^\vee)})$ is zero on these components. 

It now follows from Proposition \ref{prop:sigma1perpssprop} that the map 
$$j_! j^! \psi(\wt{K}_{D, \smint} \circ_\smint \coeff_{\,\Int(\sigma^\vee)}) \to \psi(\wt{K}_{D, \smint} \circ_\smint \coeff_{\,\Int( \sigma^\vee)})$$ 
is an isomorphism, where $j$ is the inclusion $\Int(\sigma^\vee) \into M_\bR$. Choosing $x \in V \cap \Int(\sigma^\vee)$, it follows from the argument of the previous paragraph that $\psi(\wt{K}_{D, \smint} \circ_\smint \coeff_{\,\Int( \sigma^\vee)})_x \cong \bC$. But (\ref{eq:vcyclesssbound}) also implies $\dot{\msp}(\psi(\wt{K}_{D, \smint} \circ_\smint \coeff_{\,\Int( \sigma^\vee)}))$ lives above the complement of $\Int(\sigma^\vee)$, hence $j^! \psi(\wt{K}_{D, \smint} \circ_\smint  \coeff_{\,\Int(\sigma^\vee)})$ is locally constant and the claim follows. 
\end{proof}

We are now ready to prove our main results, first in the equivariant setting and then in the nonequivariant setting. As reviewed in Section \ref{sec:CCC}, we denote the functors comprising the equivariant and nonequivariant CCCs respectively by 
$$ \wt{A}: \Perf^{T_N}(X_\Sigma) \rightleftarrows \Sh_{\wt{\Lambda}_\Sigma}^{b,cpt}(M_\bR) :\wt{B}$$ 
and
$$ A: \Coh(X_\Sigma) \rightleftarrows \Sh_{\Lambda_\Sigma}^c(T^n) :B.  $$
We refer to  Remark \ref{rem:compactvsconstruct} below for some discussion of the distinction between compactness and constructibility in the following statements. 

\begin{Theorem}\label{thm:equivmainthm}
	For any $\cF \in \Sh(M_\bR)$ we have an isomorphism
	$$ \psi(\wt{K}_{D,\smint}) \circ \cF \cong \cP(D) \conv \cF. $$
In particular, if $\cF \in \Sh_{\wt{\Lambda}_\Sigma}^{b,cpt}(M_\bR)$ we have an isomorphism
	$$ \psi(\wt{K}_{D,\smint}) \circ \cF \cong \wt{A}(\cO(D) \otimes \wt{B}(\cF)). $$
\end{Theorem}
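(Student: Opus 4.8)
The plan is to reduce Theorem~\ref{thm:equivmainthm} to Proposition~\ref{prop:psiKDonshards} together with the general machinery of Section~\ref{sec:GKSonLie}, applied to the abelian Lie group $G = M_\bR$. First I would invoke Proposition~\ref{prop:nearbyactions} to interchange $\psi$ with the action of the kernel: since $G = M_\bR$ is abelian the bi-invariance hypothesis is automatic, and Proposition~\ref{prop:dvarphiepbounded} provides exactly the required uniform bound, namely a compact $Z \subset \fg = M_\bR$ (any ball containing $\overline{B_{\ep' R}(\Conv(\{\chi_\sigma\}))}$ for a fixed small $\ep'$) with $d\varphi_\ep(\dot N_\bR) \subseteq Z$ for $\ep < \ep'$. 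Thus if we set $\wt{\cP}_\smint := \wt{K}_{D,\smint} \circ_\smint \coeff_{\{0\}}$, Proposition~\ref{prop:nearbyactions} gives a natural isomorphism
$$ \psi(\wt{K}_{D,\smint}) \circ \cF \cong (\psi \wt{\cP}_\smint) \conv \cF $$
for cohomologically constructible $\cF$, compatibly with the group-convolution description; here I am also using Proposition~\ref{prop:GKSongroups} to identify the kernel action with convolution by $\wt{\cP}_\smint$ in the first place. So it suffices to identify $\psi \wt{\cP}_\smint$ with $\cP(D)$.

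Next I would identify $\psi\wt{\cP}_\smint \cong \cP(D)$ by resolving $\coeff_{\{0\}}$ and using Proposition~\ref{prop:psiKDonshards}. Concretely, recall that $\cP(D) = \wt A(\cO(D))$ is presented by the image under $\wt A$ of the \v{C}ech resolution of $\cO(D)$, a complex whose term in degree $-k$ is $\bigoplus_{\sigma \in \Sigma(k)} \coeff_{\,\Int(\chi_\sigma + \sigma^\vee)}$. On the other hand, $\coeff_{\{0\}}$ admits an analogous resolution by the sheaves $\bigoplus_{\sigma \in \Sigma(k)}\coeff_{\,\Int(\sigma^\vee)}$ (this is the \v{C}ech/Koszul resolution mirror to $\cO_{X_\Sigma}$ itself, i.e. the $D = 0$ case, combined with the fact that $\bigcap_\sigma \Int(\sigma^\vee)$ degenerates to the origin — equivalently it is the standard simplicial resolution of the skyscraper coming from the cover of a punctured neighborhood of $0$ by the $\Int(\sigma^\vee)$). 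Applying $\psi(\wt{K}_{D,\smint} \circ_\smint -)$ termwise and using Proposition~\ref{prop:psiKDonshards}, which says $\psi(\wt K_{D,\smint}\circ_\smint \coeff_{\,\Int(\sigma^\vee)}) \cong \coeff_{\,\Int(\chi_\sigma + \sigma^\vee)}$, turns the first resolution into the second. One must check the differentials match up to the signs appearing in \cite[Rem. 3.10]{Zho20}; this is forced because the maps $\coeff_{\,\Int(\sigma^\vee)} \to \coeff_{\,\Int(\tau^\vee)}$ for $\tau \preceq \sigma$ are the unique (up to scalar) nonzero maps between these sheaves, and $\psi(\wt K_{D,\smint}\circ_\smint -)$ is a functor, so it carries the one to the other up to an invertible scalar; a consistency/connectedness argument over the face poset then pins the scalars down. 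Since $\psi$ and $\wt K_{D,\smint}\circ_\smint -$ are exact, this yields $\psi\wt{\cP}_\smint \cong \cP(D)$, and combining with the previous paragraph gives the first isomorphism of the theorem for cohomologically constructible $\cF$; one extends to all $\cF \in \Sh(M_\bR)$ by noting both sides commute with colimits in $\cF$ and $\Sh(M_\bR)$ is generated under colimits by (cohomologically) constructible sheaves, or alternatively by observing the convolution formula $\cP(D)\conv\cF \cong (\ol a^*\cP(D))\circ\cF$ from Proposition~\ref{prop:groupconvidentity} makes the right side manifestly a colimit-preserving functor of $\cF$ and matching the left side likewise.

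For the second, ``in particular'' statement, I would simply feed the first isomorphism through the equivariant CCC. Since the CCC intertwines $\otimes$ on $\Perf^{T_N}(X_\Sigma)$ with $\conv$ on $\Sh_{\wt\Lambda_\Sigma}^{b,cpt}(M_\bR)$ \cite[Cor. 3.13]{FLTZ11}, and $\wt A(\cO(D)) = \cP(D)$ by definition, for $\cF \in \Sh_{\wt\Lambda_\Sigma}^{b,cpt}(M_\bR)$ we get $\cP(D)\conv\cF \cong \wt A(\cO(D)\otimes\wt B(\cF))$; combining with the first isomorphism gives $\psi(\wt K_{D,\smint})\circ\cF \cong \wt A(\cO(D)\otimes\wt B(\cF))$ as claimed. (One should remark that a priori $\psi(\wt K_{D,\smint})\circ\cF$ need only lie in $\Sh(M_\bR)$, but the right-hand side shows it in fact lies in $\Sh_{\wt\Lambda_\Sigma}^{b,cpt}(M_\bR)$ — this is the point of Remark~\ref{rem:compactvsconstruct}.)

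The main obstacle I expect is the bookkeeping in the middle paragraph: verifying that the termwise application of $\psi(\wt K_{D,\smint}\circ_\smint-)$ to the \v{C}ech-type resolution of $\coeff_{\{0\}}$ really produces the \v{C}ech resolution of $\cP(D)$ \emph{as a complex}, i.e. with matching differentials and signs, rather than merely term by term. This requires knowing that $\psi(\wt K_{D,\smint}\circ_\smint -)$ sends the structure maps $\coeff_{\,\Int(\sigma^\vee)}\to\coeff_{\,\Int(\tau^\vee)}$ to the correct structure maps; establishing the right resolution of $\coeff_{\{0\}}$ itself (as the $D=0$ specialization, or directly) is the other delicate point. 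Everything else — the nearby-cycles/kernel interchange, the bound from Proposition~\ref{prop:dvarphiepbounded}, and the extension to arbitrary $\cF$ — is essentially formal given the results already in the excerpt.
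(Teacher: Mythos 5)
Your proposal is correct and follows essentially the same route as the paper: identify $\psi(\wt{K}_{D,\smint}\circ_\smint\bC_0)\cong\cP(D)$ by applying Proposition~\ref{prop:psiKDonshards} termwise to the shard resolution of $\bC_0$ (matching differentials as you indicate), then use Proposition~\ref{prop:nearbyactions} to convert the kernel action into convolution, and finish with \cite[Cor. 3.13]{FLTZ11}. The only inessential difference is that your detour through cohomologically constructible $\cF$, the bound of Proposition~\ref{prop:dvarphiepbounded}, and a colimit extension is unnecessary: the paper gets the statement for all $\cF\in\Sh(M_\bR)$ directly from the vertical isomorphism of Proposition~\ref{prop:nearbyactions}, which holds without any constructibility or boundedness hypotheses.
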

\begin{proof}
We first claim that $\psi(\wt{K}_{D,\smint} \circ_\smint \bC_0) \cong \cP(D).$ Recall that $\cP(D)$ is defined by the resolution
$$   \cdots \to \bigoplus_{\sigma \in \Sigma(k+1)} \bC_{\Int(\chi_\sigma + \sigma^\vee)}[k+1] \to \bigoplus_{\sigma \in \Sigma(k)} \bC_{\Int(\chi_\sigma + \sigma^\vee)}[k] \to \cdots,  $$
where the differentials are the sums of the canonical maps $\bC_{\Int(\chi_\sigma + \sigma^\vee)} \to \bC_{\Int(\chi_\tau + \tau^\vee)}$ for $\tau$ a facet of $\sigma$. As $\bC_0$ is the twisted polytope sheaf of the trivial divisor, it likewise admits the resolution
$$   \cdots \to \bigoplus_{\sigma \in \Sigma(k+1)} \bC_{\Int( \sigma^\vee)}[k+1] \to \bigoplus_{\sigma \in \Sigma(k)} \bC_{\Int(\sigma^\vee)}[k] \to \cdots.  $$
By Proposition \ref{prop:psiKDonshards} we have $\psi(\wt{K}_{D,\smint} \circ_\smint \bC_{\Int( \sigma^\vee)}) \cong  \bC_{\Int(\chi_\sigma + \sigma^\vee)}$ for all $\sigma \in \Sigma$. It follows from the proof that the value of the functor $\psi(\wt{K}_{D,\smint} \circ_\smint -)$ on the canonical map $\bC_{\Int(\sigma^\vee)} \to \bC_{\Int( \tau^\vee)}$ is the canonical map $\bC_{\Int(\chi_\sigma + \sigma^\vee)} \to \bC_{\Int(\chi_\tau + \tau^\vee)}$, since both have the same value on the stalk of a suitably chosen point in $\Int(\chi_\sigma + \sigma^\vee)$. Thus we obtain $\psi(\wt{K}_{D,\smint} \circ_\smint \bC_0) \cong \cP(D)$.

By Proposition \ref{prop:nearbyactions}, or more precisely its restriction to $1 \in \bR$, we now have
$$ \psi(\wt{K}_{D,\smint}) \circ \cF \cong \cP(D) \conv \cF$$
for any $\cF \in \Sh(M_\bR)$. But for $\cF \in \Sh_{\wt{\Lambda}_\Sigma}^{b,cpt}(M_\bR)$ we have $\cP(D) \conv \cF \cong A(\cO(D) \otimes B(\cF))$ by \cite[Cor. 3.5, Cor. 3.13]{FLTZ11}, and the proposition follows. 
\end{proof}

The next result establishes Theorem \ref{thm:mainthmintro} under our standing assumption that $\Sigma$ is complete. 

\begin{Theorem}\label{thm:nonequivmainthm}
	For any $\cF \in \Sh(T^n)$ we have an isomorphism
	\begin{equation*}\label{eq:nonequivmainthm} \psi(\no{K}_{D,\smint}) \circ \cF \cong (p_! \cP(D)) \conv \cF. \end{equation*}
	In particular, if $\cF \in \Sh_{\Lambda_\Sigma}^c(T^n)$ we have an isomorphism
	$$ \psi(\no{K}_{D,\smint}) \circ \cF \cong A(\cO(D) \otimes B(\cF)). $$
\end{Theorem}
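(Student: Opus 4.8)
The plan is to deduce Theorem~\ref{thm:nonequivmainthm} from the equivariant statement Theorem~\ref{thm:equivmainthm} by pushing forward along the projection $p: M_\bR \to T^n$, exactly as the commutative diagram~(\ref{eq:equivnonequivdiagram}) relates the two CCCs. First I would check that the GKS kernels on the two sides are compatible with $p$. Concretely, the families of Hamiltonians $\wt{H}_\smint$ on $\dot{T}^* M_\bR$ and $H_\smint$ on $\dot{T}^* T^n$ are both pulled back from the same homogenized smoothing $\varphi_\smint$ on $\dot{N}_\bR$, and $p$ is a local diffeomorphism intertwining the two cotangent bundles; so the uniqueness characterization of GKS kernels recalled in Section~\ref{sec:generalities} forces $\ol{p}_{1!}\wt{K}_{D,\smint} \cong \ol{p}_2^* K_{D,\smint}$ (the $T^n$-kernel is obtained by descent, or equivalently by $!$-pushing the $M_\bR$-kernel along one factor), where $\ol{p}_1 = p\times \id$ and $\ol{p}_2 = \id\times p$ in the appropriate product. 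The same identity holds after forming the total kernels over $\smint$ and, since $p$ is proper-on-fibers compatible in the needed sense and nearby cycles commutes with the relevant pushforward (via Proposition~\ref{prop:boxpush}-type arguments and proper/smooth base change, as already used in the proof of Proposition~\ref{prop:nearbyactions}), also after applying $\psi$: that is, $\ol{p}_{1!}\,\psi(\wt{K}_{D,\smint}) \cong \ol{p}_2^*\,\psi(K_{D,\smint})$.

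Given this intertwining, I would invoke the convolution projection formula, Proposition~\ref{prop:convprojform}, with $f = p$, $K = \psi(\wt{K}_{D,\smint})$, and $\wt{K} = \psi(K_{D,\smint})$. For any $\cG \in \Sh(M_\bR)$ this yields
\[
\psi(K_{D,\smint}) \circ p_!\,\cG \;\cong\; p_!\bigl(\psi(\wt{K}_{D,\smint}) \circ \cG\bigr).
\]
Now for an arbitrary $\cF \in \Sh(T^n)$, I want to apply this with $\cG$ a sheaf on $M_\bR$ pushing forward to $\cF$. The cleanest route: since $p$ is a covering map of the compact torus by $M_\bR$, every $\cF \in \Sh(T^n)$ can be written as $p_! \cG$ for a suitable $\cG$ — e.g. $\cG = j_! p^* \cF$ where $j$ is the inclusion of a fundamental domain's interior, or more functorially using that $p_! p^* $ is given by convolution with $p_!\bC_{[0,1)^n}$; alternatively, because both sides of the desired isomorphism are colimit-preserving functors of $\cF$ and $\Sh(T^n)$ is generated under colimits by objects of the form $p_! \cG$, it suffices to treat that case. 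Applying the projection formula and then Theorem~\ref{thm:equivmainthm} (in its unrestricted form, valid for all $\cG \in \Sh(M_\bR)$) gives
\[
\psi(K_{D,\smint}) \circ p_!\,\cG \;\cong\; p_!\bigl(\cP(D) \conv \cG\bigr) \;\cong\; (p_!\cP(D)) \conv (p_! \cG),
\]
where the last step uses that $p$ is a group homomorphism $M_\bR \onto T^n$, so $p_!$ is monoidal for convolution (this is the standard fact $p_!(\alpha \conv \beta) \cong p_!\alpha \conv p_!\beta$, itself an instance of base change along $m$). Since $p_!\cG \cong \cF$ this is exactly the first claimed isomorphism. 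The second claim then follows immediately: restricting to $\cF \in \Sh^c_{\Lambda_\Sigma}(T^n)$, the diagram~(\ref{eq:equivnonequivdiagram}) together with the identification $\cP(D) = \wt{A}(\cO(D))$ gives $p_!\cP(D) \conv \cF \cong A(\cO(D)\otimes B(\cF))$, using that the nonequivariant CCC intertwines $\overset{!}{\otimes}$ with $\conv$ and that $p_!$ carries $\wt{\Lambda}_\Sigma$-constructible sheaves to $\Lambda_\Sigma$-constructible ones.

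The main obstacle I expect is the first step: rigorously establishing $\ol{p}_{1!}\,\psi(\wt{K}_{D,\smint}) \cong \ol{p}_2^*\,\psi(K_{D,\smint})$, i.e. that the $M_\bR$-side GKS kernel descends to the $T^n$-side one compatibly with nearby cycles. One has to be a little careful about properness — $p$ is not proper, but the singular support conditions (2)--(3) characterizing GKS kernels in Section~\ref{sec:generalities}, combined with the fact that $\dot{s}s(\wt{K}_{D,\ep,t})$ projects diffeomorphically to its front and is controlled by Propositions~\ref{prop:limsup} and~\ref{prop:dvarphiepbounded}, confine the relevant supports to regions on which $\ol{p}$ is proper, analogously to how compact support of $Z_{\exp}$ was used in the proof of Proposition~\ref{prop:nearbyactions}. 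Verifying the kernel identity before taking nearby cycles is essentially the uniqueness clause of \cite[Thm.~3.7]{GKS12} applied over the covering $p$; propagating it through $\psi$ then uses proper base change and Proposition~\ref{prop:boxpush} in the same pattern as Proposition~\ref{prop:nearbyactions}. Everything else — the projection formula, the monoidality of $p_!$, and the appeal to the equivariant theorem — is formal.
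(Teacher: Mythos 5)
Your strategy is sound and shares its essential technical content with the paper's proof: both hinge on the kernel descent isomorphism $\ol{p}_{1!}\,\psi(\wt{K}_{D,\smint}) \cong \ol{p}_2^*\,\psi(\no{K}_{D,\smint})$, which you correctly identify as the main obstacle, and both then combine Proposition~\ref{prop:convprojform}, Theorem~\ref{thm:equivmainthm}, and diagram~(\ref{eq:equivnonequivdiagram}). Where you genuinely differ is the endgame. The paper applies Proposition~\ref{prop:convprojform} only to $\cG = \bC_0$, obtaining $\psi(\no{K}_{D,\smint}\circ_\smint \bC_0)\cong p_!\cP(D)$, and then handles an arbitrary $\cF\in\Sh(T^n)$ by invoking Proposition~\ref{prop:nearbyactions} directly on the compact group $T^n$ (its vertical isomorphisms need no constructibility), so no lifting of $\cF$ to $M_\bR$ is ever needed. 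You instead write $\cF \cong p_!\cG$, apply the projection formula for general $\cG$, use the unrestricted form of Theorem~\ref{thm:equivmainthm}, and finish with monoidality of $p_!$ for convolution; this is a valid, purely formal alternative once the kernel identity is in place, at the cost of the lifting step. One caveat there: $\cG = j_!p^*\cF$ for the \emph{interior} of a fundamental domain does not work, since $p_!j_!p^*\cF \cong u_!u^*\cF$ for the open embedding $u$ of the image, which differs from $\cF$; take instead the locally closed half-open domain $[0,1)^n$ (where the counit does give $p_!\cG \congto \cF$), or fall back on your colimit-generation argument, both of which are fine.

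On the deferred main step, be aware that ``the uniqueness clause of GKS applied over the covering'' cannot be quoted verbatim: $\ol{p}_{1!}\wt{K}_{D,\smint,\bR}$ lives on $T^n\times M_\bR\times\smint\times\bR$, not on $T^n\times T^n\times\smint\times\bR$, so the characterization of GKS kernels does not directly apply to it. The paper's actual argument composes both $\ol{p}_{1!}\wt{K}_{D,\smint,\bR}$ and $\ol{p}_2^*\no{K}_{D,\smint,\bR}$ with the inverse kernel $\wt{K}_{D,\smint,\bR}^{\sm 1}$, bounds the singular supports of the compositions by the conormal to $\ol{p}_1(\Delta_{M_\bR})\times\smint\times\bR$, and compares restrictions at $t=0$; and to commute $\ol{p}_{1!}$ past $\ol{j}_*$ it uses Proposition~\ref{prop:dvarphiepbounded} to confine $\supp(\wt{K}_{D,\smint})$ to $B_R(\Delta_{M_\bR})\times\smint$, on which $\ol{p}_1$ is proper --- this is exactly the properness mechanism you anticipated. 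So your plan is correct modulo replacing the vague appeal to uniqueness with an argument of that kind; everything downstream of the kernel identity in your writeup goes through.
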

\begin{proof}
We write $\ol{p}_1$ and $\ol{p}_2$ for $p \times \id_{M_\bR}$ and $\id_{T^n} \times p$, respectively, or for their product with an identity map which is clear from context. 
Let us first prove the claim assuming there is an isomorphism $\ol{p}_{1!} \psi(\wt{K}_{D,\smint}) \cong \ol{p}_2^* \psi(\no{K}_{D,\smint})$. We would then have isomorphisms
\begin{align*}
\psi(\no{K}_{D,\smint} \circ_\smint \bC_0) \cong \psi(\no{K}_{D,\smint}) \circ \bC_0 \cong p_!(\psi(\wt{K}_{D,\smint}) \circ \bC_0) \cong p_! \cP(D).
\end{align*}
Here the first isomorphism uses Proposition \ref{prop:nearbyactions}, the second uses Proposition \ref{prop:convprojform} and our assumption, and the third uses Theorem \ref{thm:equivmainthm}. The desired isomorphism between $\psi(\no{K}_{D,\smint}) \circ \cF$ and $ (p_! \cP(D)) \conv \cF$ now follows from Proposition \ref{prop:nearbyactions}. For $\cF \in \Sh_{\Lambda_\Sigma}^c(T^n)$, the isomorphism between $\psi(\no{K}_{D,\smint}) \circ \cF$ and $A(\cO(D) \otimes B(\cF))$ further follows from \cite[Rem. 12.12]{Kuw20} and~(\ref{eq:equivnonequivdiagram}), noting that $\cO(D) \otimes B(\cF) \cong (\cO(D) \otimes \omega_{X_\Sigma}) \overset{!}{\otimes} B(\cF)$. 

To show $\ol{p}_{1!} \psi(\wt{K}_{D,\smint}) \cong \ol{p}_2^* \psi(\no{K}_{D,\smint})$, we first show $\ol{p}_{1!} \wt{K}_{D,\smint} \cong \ol{p}_2^* \no{K}_{D,\smint}$. We again write $\wt{K}_{D,\smint,\bR} \in \Sh(M_\bR \times M_\bR \times \smint \times \bR)$ and $\no{K}_{D,\smint,\bR} \in \Sh(T^n \times T^n \times \smint \times \bR)$ for the full GKS kernels quantizing the flows of $\wt{H}_\smint$ and $H_\smint$ for all $t \in \bR$. It follows from (\ref{eq:ssKid}) in the proof of Proposition \ref{prop:GKSongroups} that $\dot{\msp}(\ol{p}_{1!} \wt{K}_{D,\smint,\bR})$ and $\dot{\msp}(\ol{p}_2^* \no{K}_{D,\smint,\bR})$ are both contained in the conic Lagrangian $L$ determined by 
$$  \ol{\pi}_{\smint \times \bR}(L) = \{(p(x + t d\varphi_\ep(\xi)), x, \xi, \sm \xi, \ep, t) \}_{(x, \xi, \ep, t) \in \dot{T}^*M_\bR \times \smint \times \bR}.  $$
As in the proof of \cite[Prop. 3.2(ii)]{GKS12}, it follows that $\dot{\msp}(\ol{p}_{1!} \wt{K}_{D,\smint,\bR} \circ_{\smint \times \bR} \wt{K}_{D,\smint,\bR}^{\sm 1})$ and $\dot{\msp}(\ol{p}_2^* \no{K}_{D,\smint,\bR} \circ_{\smint \times \bR} \wt{K}_{D,\smint,\bR}^{\sm 1})$ are both contained in the conormal to $$\ol{p}_1(\Delta_{M_\bR}) \times \smint \times \bR = \ol{p}_2^{\sm 1}(\Delta_{T^n}) \times \smint \times \bR.$$ 
Since the restrictions of $\ol{p}_{1!} \wt{K}_{D,\smint,\bR} \circ_{\smint \times \bR} \wt{K}_{D,\smint,\bR}^{\sm 1}$ and $\ol{p}_2^* \no{K}_{D,\smint,\bR} \circ_{\smint \times \bR} \wt{K}_{D,\smint,\bR}^{\sm 1}$ to $T^n \times M_\bR\times \smint \times \{0\}$ are both isomorphic to $\coeff_{\ol{p}_1(\Delta_{M_\bR}) \times \smint}$, it  follows that these sheaves must be isomorphic before restricting. Composing  on the right with $\wt{K}_{D,\smint,\bR}$ then yields the desired isomorphism $\ol{p}_{1!} \wt{K}_{D,\smint} \cong \ol{p}_2^* \no{K}_{D,\smint}$. 

The desired isomorphism $\ol{p}_{1!} \psi(\wt{K}_{D,\smint}) \cong \ol{p}_2^* \psi(\no{K}_{D,\smint})$ may now be obtained as the composition
\begin{equation}\label{eq:psiprojeq}
	\begin{aligned}
\ol{p}_{1!} \ol{i}^* \ol{j}_* \wt{K}_{D,\smint} & \cong \ol{i}^* \ol{p}_{1!} \ol{j}_* \wt{K}_{D,\smint} \\
 & \cong  \ol{i}^* \ol{j}_* \ol{p}_{1!} \wt{K}_{D,\smint} \\
 & \cong  \ol{i}^* \ol{j}_* \ol{p}_{2}^* \no{K}_{D,\smint} \\
 & \cong  \ol{i}^* \ol{p}_{2}^* \ol{j}_*  \no{K}_{D,\smint} \\
 & \cong  \ol{p}_{2}^* \ol{i}^* \ol{j}_*  \no{K}_{D,\smint}.
 \end{aligned}
\end{equation}
Here the first isomorphism uses proper base change, the third uses the previous paragraph, the fourth uses smooth base change, and the last is trivial. 

The second isomorphism in (\ref{eq:psiprojeq}) comes from the base change transformation $\ol{p}_{1!} \ol{j}_* \to \ol{j}_* \ol{p}_{1!}$. To show its value on $\wt{K}_{D,\smint}$ is indeed an isomorphism, it suffices to show that $\ol{p}_1$ is proper on the support of $\ol{j}_* \wt{K}_{D,\smint}$, hence that $\ol{p}_{1!} \ol{j}_* \wt{K}_{D,\smint} \cong \ol{p}_{1*} \ol{j}_* \wt{K}_{D,\smint}$ and $\ol{j}_* \ol{p}_{1!} \wt{K}_{D,\smint} \cong \ol{j}_*  \ol{p}_{1*} \wt{K}_{D,\smint}$. It follows from Proposition \ref{prop:dvarphiepbounded} that for some $R, \ep' > 0$ we have $|td \varphi_\ep( \dot{N}_\bR)| < R$ for all $\ep < \ep'$ and $t \in [0,1]$. Replacing $\smint$ with $\smint \cap (0,\ep')$ and $\lgint$ with $\lgint \cap (- \infty, \ep')$ if needed, it now follows from \cite[Prop. 3.2(i)]{GKS12} and (a trivial case of) Proposition \ref{prop:GKSongroupsflowpart} that $\supp(\wt{K}_{D,\smint})$ is contained in $B_R(\Delta_{M_\bR}) \times \smint$. Thus $\supp(\ol{j}_* \wt{K}_{D,\smint})$ is contained in $B_R(\Delta_{M_\bR}) \times \lgint$, and the claim follows since $\ol{p}_1$ is proper on $B_R(\Delta_{M_\bR}) \times \lgint$. 
\end{proof}

\begin{Remark}\label{rem:compactvsconstruct}
Note that if $\cF \in Sh(T^n)$ is cohomologically constructible, it follows from Proposition \ref{prop:nearbyactions} that the first isomorphism of Theorem \ref{thm:nonequivmainthm} is equivalent to 
$$ \psi(\no{K}_{D,\smint} \circ_\smint \cF) \cong (p_! \cP(D)) \conv \cF.$$ 
This is presumably true for more general $\cF \in Sh(T^n)$: cohomological constructibility is only needed to apply Proposition \ref{prop:boxpush} on the compatibility of pushforward with external products, and we imagine this can be relaxed somewhat. In particular, we imagine it holds when $\cF \in Sh^c_{\Lambda_\Sigma}(T^n)$, so that $\cF$ is weakly constructible but need not have perfect stalks. This is only even an issue in the singular case, since when $X_\Sigma$ is smooth $Sh^c_{\Lambda_\Sigma}(T^n) = Sh^b_{\Lambda_\Sigma}(T^n)$ and any such $\cF$ is in fact constructible. 

The same discussion holds in the equivariant setting: if $\cF \in \Sh(M_\bR)$ is cohomologically constructible, it follows from Proposition \ref{prop:nearbyactions} together with Proposition \ref{prop:dvarphiepbounded} that the first isomorphism of  \ref{thm:equivmainthm} is equivalent to 
$$ \psi(\wt{K}_{D,\smint} \circ_\smint \cF) \cong \cP(D) \conv \cF.$$ 
\end{Remark}

We now consider the case when the fan $\Sigma$ is not complete. In this case $\varphi_D$ is only defined on $|\Sigma|$, so to speak of the Hamiltonian flows of its smoothings we must first extend $\varphi_D$ to all of $N_\bR$. Ultimately this is not a serious problem, and the above proofs could be modified to work with any reasonable such extension. However, this also makes the proofs somewhat less readable, hence we have chosen to instead treat the noncomplete case separately, arguing by reduction to the complete case. 

To do this we first choose (1) a smooth refinement $\Sigma_s$ of $\Sigma$, and (2) a smooth, complete fan $\Sigma_c$ containing $\Sigma_s$. The associated Legendrians are nested in each other, i.e. we have $\Lambda_{\Sigma} \subseteq \Lambda_{\Sigma_s} \subseteq \Lambda_{\Sigma_c}$. We have an associated proper birational map $f: X_{\Sigma_s} \to X_\Sigma$ and an associated open immersion $u: X_{\Sigma_s} \to X_{\Sigma_c}$. 

\begin{Lemma}\label{lem:varphiext}
There are toric Cartier divisors $D_s$ on $X_{\Sigma_s}$ and $D_c$ on $X_{\Sigma_c}$ such that 
$$ f^* \cO(D) \cong \cO(D_s) \cong u^* \cO(D_c), $$
and such that the restriction of the support function $\varphi_{D_c}: N_\bR \to \bR$ to $|\Sigma| = |\Sigma_s|$ coincides with the support functions $\varphi_{D}$ and $\varphi_{D_s}$. 
\end{Lemma}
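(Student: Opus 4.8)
The statement is purely about toric geometry — it asks us to build compatible Cartier divisors on the three toric varieties $X_\Sigma \leftarrow X_{\Sigma_s} \hookrightarrow X_{\Sigma_c}$ whose support functions all restrict to $\varphi_D$ on $|\Sigma|$. The natural route is to work entirely with piecewise linear functions on $N_\bR$, since a toric Cartier divisor on a smooth (hence simplicial) toric variety is the same data as a PL function on its fan that is integral linear on each cone. So the plan is: first extend $\varphi_D$ to an integral PL function $\psi$ on all of $N_\bR$ that is linear on the cones of $\Sigma_c$; then read off $D_s$ and $D_c$ from $\psi$; then check the line-bundle isomorphisms $f^*\cO(D) \cong \cO(D_s)$ and $u^*\cO(D_c) \cong \cO(D_s)$, which are standard once the support functions match up under refinement and restriction.

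\textbf{Key steps, in order.} First, recall that $\varphi_D$ is given on $|\Sigma|$ by Cartier data $\{m_\sigma\}_{\sigma \in \Sigma}$, i.e. $\varphi_D(\xi) = \langle m_\sigma \mid \xi\rangle$ for $\xi \in \sigma$, and that on the smooth refinement $\Sigma_s$ of $\Sigma$ the restriction $\varphi_{D_s} := \varphi_D|_{|\Sigma_s|}$ is automatically integral PL with respect to $\Sigma_s$ (each cone of $\Sigma_s$ sits inside some cone of $\Sigma$, where $\varphi_D$ is the integral linear functional $m_\sigma$); this gives $D_s$ with $\varphi_{D_s}$ as claimed. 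Second, extend $\varphi_{D_s}$ from $|\Sigma_s| = |\Sigma|$ to a function $\varphi_{D_c}$ on all of $N_\bR$ that is integral linear on each maximal cone of $\Sigma_c$. Since $\Sigma_c$ is a smooth complete fan refining nothing in particular on the complement of $|\Sigma|$, one can do this cone by cone: for a maximal cone $\sigma_c$ of $\Sigma_c$ whose interior meets $|\Sigma|$, $\sigma_c$ lies inside a maximal cone of $\Sigma_s$ (after possibly passing to a common refinement — see the obstacle below) and we take the same linear functional; for the remaining maximal cones, which lie in the closure of $N_\bR \smallsetminus |\Sigma|$, assign arbitrary integral linear functionals subject to the compatibility that adjacent cones agree along shared facets. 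Checking that such a globally consistent assignment exists is the combinatorial heart of the argument. Third, having produced $\varphi_{D_c}$, let $D_c$ be the associated torus-invariant Cartier divisor on $X_{\Sigma_c}$; by construction $\varphi_{D_c}|_{|\Sigma|} = \varphi_D$. Fourth, deduce the line-bundle isomorphisms: $\cO(D_s) \cong f^*\cO(D)$ because pullback of a Cartier divisor under the toric morphism $f\colon X_{\Sigma_s}\to X_\Sigma$ induced by the refinement corresponds exactly to restricting the support function \cite[Prop.~6.2.7, Thm.~6.1.7]{CLS11}; similarly $u^*\cO(D_c) \cong \cO(D_s)$ because $u\colon X_{\Sigma_s}\to X_{\Sigma_c}$ is the open immersion associated to the subfan $\Sigma_s \subseteq \Sigma_c$ and pullback of Cartier data along a subfan inclusion is just restriction of the PL function.

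\textbf{The main obstacle.} The one genuinely nontrivial point is the simultaneous choice of $\Sigma_s$ and $\Sigma_c$ so that $\Sigma_s$ is literally a subfan of $\Sigma_c$ (not merely a refinement of a subfan) \emph{and} the extension of $\varphi_{D_s}$ to an integral PL function on $\Sigma_c$ actually exists. Existence of a smooth complete $\Sigma_c$ containing a given smooth $\Sigma_s$ as a subfan is a known but slightly delicate statement (one typically first completes $\Sigma_s$ to some complete fan, then resolves, being careful not to subdivide the cones already in $\Sigma_s$); the excerpt has already asserted these choices are made, so here one only needs to invoke them. The extension of the PL function is the remaining issue: on the subfan $\Sigma_s$ the function $\varphi_{D_s}$ is prescribed and integral linear on each cone, and we must extend it over the extra maximal cones of $\Sigma_c$ keeping it integral and continuous. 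Continuity forces the value on a new maximal cone to agree with neighbors along shared walls; since each new maximal cone of a \emph{smooth} fan is generated by a basis of $N$, one can always choose an integral linear functional on it matching any prescribed integral values on a subset of its facets, and a straightforward induction over the new cones (ordered so that each is attached along a connected union of already-treated facets) produces a global integral PL extension. I would isolate this as the key lemma, prove it by that induction, and then the rest of Lemma~\ref{lem:varphiext} follows formally from the dictionary between Cartier divisors, PL functions, and pullback along toric morphisms in \cite{CLS11}.
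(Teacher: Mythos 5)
Your proposal is correct and follows essentially the same route as the paper: restrict $\varphi_D$ to define $D_s$, extend to $\Sigma_c$, and invoke the standard support-function/pullback dictionary for the line-bundle isomorphisms. The only real difference is that the paper dispenses with what you call the ``combinatorial heart'' in one line: since $\Sigma_c$ is smooth, one simply extends the ray coefficients $a_\rho = -\varphi_D(u_\rho)$ arbitrarily to the rays of $\Sigma_c$ not in $\Sigma_s$ --- every torus-invariant divisor on a smooth toric variety is automatically Cartier, and on a simplicial fan arbitrary integral ray values already determine a continuous integral piecewise linear function, so no induction over cones or consistency check along walls is needed.
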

\begin{proof}
Note that $\varphi_D: |\Sigma| = |\Sigma_s| \to \bR$ still satisfies the conditions needed to be a support function for a Cartier divisor on $X_{\Sigma_s}$. Explicitly, for any $\rho \in \Sigma_s(1)$ with primitive generator~$u_\rho$ we set $a_\rho =  \sm \varphi_D(u_\rho)$ and
$ D_s = \sum_{\rho \in \Sigma_s(1)} a_\rho D_\rho.$
We then extend the assignment $\rho \mapsto a_\rho$ arbitrarily from $\Sigma_s(1)$ to $\Sigma_c(1)$. Since $\Sigma_c$ is smooth the resulting divisor $ D_c = \sum_{\rho \in \Sigma_c(1)} a_\rho D_\rho$ will still be Cartier. The final claim is immediate from how the correspondence between support functions and divisors is defined. 
\end{proof}

In particular, we have that $\varphi_{D_c}$ is an extension of $\varphi_D$ to all of $N_\bR$, and with this in mind we write $\varphi$ for $\varphi_{D_c}$ from now on. We then define the Hamiltonians $H_{D,\ep}$ and the family of kernels $K_{D,I}$ the same way as before, the only difference being that $\varphi$ is now our chosen extension of $\varphi_D$ rather than $\varphi_D$ itself. We then have the following final result, completing the proof of Theorem \ref{thm:mainthmintro} in the case of arbitrary $\Sigma$. 

\begin{Theorem}\label{thm:noncompletemainthm}
For any $\cF \in \Sh_{\Lambda_\Sigma}^c(T^n)$ we have an isomorphism
$$ \psi(\no{K}_{D,\smint}) \circ \cF \cong A(\cO(D) \otimes B(\cF)). $$
\end{Theorem}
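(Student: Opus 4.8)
The plan is to reduce the noncomplete case to the complete case via the two birational/open maps $f: X_{\Sigma_s} \to X_\Sigma$ and $u: X_{\Sigma_s} \to X_{\Sigma_c}$ supplied by Lemma~\ref{lem:varphiext}, since $\Sigma_c$ is complete and $X_{\Sigma_c}$ is the target of a version of Theorem~\ref{thm:nonequivmainthm} already in hand. The key observation is that all three Legendrians $\Lambda_\Sigma \subseteq \Lambda_{\Sigma_s} \subseteq \Lambda_{\Sigma_c}$ live over the \emph{same} torus $T^n$, and the Hamiltonians $H_{D,\ep}$ are defined using the single globally-chosen extension $\varphi = \varphi_{D_c}$; in particular, the kernels $K_{D,\smint}$ and $\psi(K_{D,\smint})$ are literally the same sheaves whether we think of them as acting in the $\Sigma$, $\Sigma_s$, or $\Sigma_c$ picture. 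So the operation $\cF \mapsto \psi(K_{D,\smint}) \circ \cF$ is intrinsic, and what changes between the cases is only which subcategory $\Sh^c_{\Lambda_?}(T^n)$ we restrict to and which coherent category we match it with.

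First I would record that $\Sh^c_{\Lambda_\Sigma}(T^n)$ is a full subcategory of $\Sh^c_{\Lambda_{\Sigma_c}}(T^n)$ (enlarging the Legendrian only enlarges the category), so for $\cF \in \Sh^c_{\Lambda_\Sigma}(T^n)$ Theorem~\ref{thm:nonequivmainthm} applied to the \emph{complete} fan $\Sigma_c$ and the divisor $D_c$ already gives
$$ \psi(\no{K}_{D,\smint}) \circ \cF \cong A_{\Sigma_c}(\cO(D_c) \otimes B_{\Sigma_c}(\cF)), $$
where $A_{\Sigma_c}, B_{\Sigma_c}$ are the CCC functors for $X_{\Sigma_c}$ and we have used that $\varphi_{D_c}$ is exactly the extension we chose, so $K_{D,\smint}$ is the kernel occurring in that theorem. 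It then remains to identify the right-hand side with $A_\Sigma(\cO(D) \otimes B_\Sigma(\cF))$. This is a purely ``B-side'' comparison: one must check that the CCC intertwines the geometry of the inclusions $\Lambda_\Sigma \subseteq \Lambda_{\Sigma_c}$ with the maps $f$ and $u$ of toric varieties, i.e. that restriction of sheaves to a subcategory corresponds to the functors $Lu^*$ and (an adjoint of) $f_*$ on coherent sheaves, compatibly with tensoring by the line bundles $\cO(D)$, $\cO(D_s)$, $\cO(D_c)$ which agree after pullback by Lemma~\ref{lem:varphiext}.

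Concretely, the chain I would run is: the open immersion $u: X_{\Sigma_s} \hookrightarrow X_{\Sigma_c}$ corresponds under the CCC to the restriction $\Sh^c_{\Lambda_{\Sigma_c}}(T^n) \to \Sh^c_{\Lambda_{\Sigma_s}}(T^n)$ along the inclusion of Legendrians, and because $u^*$ is monoidal for the relevant tensor structures, $u^*(\cO(D_c) \otimes (-)) \cong \cO(D_s) \otimes u^*(-)$. Similarly, the proper birational $f: X_{\Sigma_s} \to X_\Sigma$ (with $Rf_* \cO_{X_{\Sigma_s}} \cong \cO_{X_\Sigma}$ since $X_\Sigma$ is normal and $f$ is a toric resolution-type map) is matched with the inclusion $\Sh^c_{\Lambda_\Sigma}(T^n) \hookrightarrow \Sh^c_{\Lambda_{\Sigma_s}}(T^n)$ and its right adjoint, and the projection formula $Rf_*(\cO(D_s) \otimes (-)) \cong \cO(D) \otimes Rf_*(-)$ holds because $\cO(D_s) \cong f^*\cO(D)$. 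Chaining these, and using that $\cF$, being in the smallest category $\Sh^c_{\Lambda_\Sigma}(T^n)$, is fixed by both restriction operations (so $B_\Sigma(\cF)$, $B_{\Sigma_s}(f^*(\cdots))$, $B_{\Sigma_c}(\cF)$ are related by $Lu^*$ and the unit/counit of the $(f^*, Rf_*)$-type adjunction), yields
$$ A_{\Sigma_c}(\cO(D_c) \otimes B_{\Sigma_c}(\cF)) \cong A_\Sigma(\cO(D) \otimes B_\Sigma(\cF)), $$
which is the claim. The main obstacle is assembling the precise functoriality of the CCC under the maps $f$ and $u$ with the correct variance and adjunctions, and confirming that these identifications are compatible with the monoidal structures used to encode $\cO(D)$; once that bookkeeping is set up, the rest is formal, and the analytic heart of the argument has already been carried out in the complete case in Theorems~\ref{thm:equivmainthm} and~\ref{thm:nonequivmainthm}.
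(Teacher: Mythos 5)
Your overall strategy is the paper's: reduce to the complete fan $\Sigma_c$ via the maps $f$ and $u$ of Lemma~\ref{lem:varphiext}, observe that $\psi(K_{D,\smint})$ is literally the kernel occurring in Theorem~\ref{thm:nonequivmainthm} for $D_c$ (since $\varphi=\varphi_{D_c}$ was the chosen extension), and then transfer the coherent side along $f$ and $u$ using the projection formula and the compatibility $f^*\cO(D)\cong\cO(D_s)\cong u^*\cO(D_c)$. However, one step is wrong as stated: the claim that $\Sh^c_{\Lambda_\Sigma}(T^n)$ is a full subcategory of $\Sh^c_{\Lambda_{\Sigma_c}}(T^n)$. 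Enlarging the Legendrian enlarges $\Sh_{\Lambda}(T^n)$, but compactness is relative to the ambient category and is not preserved: under the CCC the inclusion $\Sh_{\Lambda_\Sigma}(T^n)\subseteq\Sh_{\Lambda_{\Sigma_c}}(T^n)$ corresponds to $u_*f^!$ on $\IndCoh$ \cite[Lem. 14.5, Rem. 14.11]{Kuw20}, and $u_*$ of a coherent complex on the open part is typically not coherent (already for $\bA^1\subset\bP^1$). So for $\cF\in\Sh^c_{\Lambda_\Sigma}(T^n)$ you cannot invoke the second isomorphism of Theorem~\ref{thm:nonequivmainthm}, which requires compactness relative to $\Lambda_{\Sigma_c}$. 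The repair is what the paper does: pass to the ind-extended equivalences $A_c,B_c$ between $\IndCoh(X_{\Sigma_c})$ and $\Sh_{\Lambda_{\Sigma_c}}(T^n)$, and use the first isomorphism of Theorem~\ref{thm:nonequivmainthm} (valid for every $\cF\in\Sh(T^n)$) together with \cite[Rem. 12.12]{Kuw20} and diagram~(\ref{eq:equivnonequivdiagram}) to obtain $\psi(K_{D,\smint})\circ\cF\cong A_c(\cO(D_c)\otimes B_c(\cF))$ for all $\cF\in\Sh_{\Lambda_{\Sigma_c}}(T^n)$.

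Your B-side comparison also has the variance muddled. The functoriality actually available, \cite[Lem. 14.5, Rem. 14.11]{Kuw20}, matches the inclusions of sheaf categories with $f^!$ and $u_*$ (not with $f_*$ or $u^*$), so the paper simply writes $B_c(\cF)\cong u_*f^!B(\cF)$ and pushes forward: $u_*f^!(\cO(D)\otimes B(\cF))\cong u_*(\cO(D_s)\otimes f^!B(\cF))\cong \cO(D_c)\otimes u_*f^!B(\cF)$, using that $f^!$ commutes with tensoring by perfect complexes and the projection formula for $u_*$. Your adjoint route via $u^*$ and $Rf_*$ can be made to work, but note that $f_*$ is the \emph{left} adjoint of $f^!$ (so it corresponds to the left adjoint of the inclusion of sheaf categories, not a right adjoint), and the fact you need is that the counit $f_*f^!\to\id$ is an equivalence, i.e. that $f^!$ is fully faithful; this follows because $f^!$ corresponds to a fully faithful inclusion under the CCC. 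The justification you cite, $Rf_*\cO_{X_{\Sigma_s}}\cong\cO_{X_\Sigma}$, is the fact relevant to the $(f^*,f_*)$ adjunction, which is not the adjunction in play here, especially since $X_\Sigma$ may be singular and the categories are $\IndCoh$ rather than $\Perf$. With the compactness claim replaced by the ind-completed statement and the adjunctions straightened out, your argument becomes the paper's proof read in the adjoint direction.
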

\begin{proof}
Let us use the same notation for $A$ and $B$ and for their ind-extensions, likewise for the equivalences $A_s$, $B_s$ and $A_c$, $B_c$ associated to $\Sigma_s$ and $\Sigma_c$. By \cite[Lem. 14.5, Rem. 14.11]{Kuw20} these equivalences intertwine the inclusions
$$ \Sh_{\Lambda_{\Sigma}}(T^n) \subseteq   \Sh_{\Lambda_{\Sigma_s}}(T^n) \subseteq \Sh_{\Lambda_{\Sigma_c}}(T^n) $$
with the functors
$$ \IndCoh(X_\Sigma) \xrightarrow{f^!} \IndCoh(X_{\Sigma_s}) \xrightarrow{u_*} \IndCoh(X_{\Sigma_c}). $$
But we then have
\begin{align*}
A(\cO(D) \otimes B(\cF)) & \cong A_c (u_* f^!(\cO(D) \otimes B(\cF))) \\
& \cong A_c (u_*(\cO(D_s) \otimes f^!B(\cF))) \\
& \cong A_c (\cO(D_c) \otimes u_*f^!B(\cF)) \\
& \cong A_c (\cO(D_c) \otimes B_c(\cF)) \\
& \cong \psi(\no{K}_{D,\smint}) \circ \cF,
\end{align*}
where the first and fourth isomorphism use \cite[Lem. 14.5, Rem. 14.11]{Kuw20}, the second uses Lemma \ref{lem:varphiext} and the projection formula, the third uses Lemma \ref{lem:varphiext} and the compatibility of~$f^!$ with tensoring by perfect objects, and the last uses Theorem \ref{thm:nonequivmainthm}. 
\end{proof}

\begin{Remark}
We assume our main results extend from toric varieties to the toric stacks considered in \cite{Kuw20}. However, we have not attempted to think through this extension carefully, let alone document~it. It is also clear that our results apply when the ground ring is more general than $\bC$, but we have restricted our attention to this case since the results of \cite{Kuw20} are only stated in this generality. 
\end{Remark}
\bibliographystyle{amsalpha}
\bibliography{bibmirrorisotopies2}

\end{document}